\long\def\emptytext#1{}
\numberwithin{equation}{section}
\newtheorem{thm}{Theorem}[section]
\newtheorem{lem}[thm]{Lemma}
\newtheorem{prop}[thm]{Proposition}
\newtheorem{cor}[thm]{Corollary}
\theoremstyle{definition}
\newtheorem{defn}[thm]{Definition}
\theoremstyle{remark}
\newtheorem{rmk}[thm]{Remark}
\newtheorem{ex}[thm]{Example}
\newtheorem{notn}[thm]{Notation}
\newtheorem{convention}[thm]{Convention}
\newcommand\susp{\Sigma^{\infty}_{+}}
\newcommand{\cE}{\mathscr E}
\newcommand\Mod{\cat{Mod}}
\newcommand{\bP}{\mathbb P}
\newcommand{\bZ}{\mathbb Z}
\newcommand{\bF}{\mathbb F}
\newcommand{\bQ}{\mathbb Q}
\newcommand{\bC}{\mathbb C}
\newcommand{\Ret}{\operatorname{Ret}}
\newcommand\Spec{\cat{Sp}}
\newcommand\fib{\ar @{->>} [r]} 
\newcommand\cof{\ar @{ >->}[r]}
\newcommand \Om{\Omega}
\newcommand \del{\partial}
\newcommand \ve{\varepsilon}
\newcommand\cat{\mathsf}
\newcommand{\ob}{\operatorname{Ob}}
\newcommand{\G}{\mathbb G}
\newcommand{\bS}{\mathbb S}
\newcommand{\bbS}{\mathbb S}
\renewcommand{\P}{\mathbb P}
\newcommand\egal[2]{\overset {#1}{\underset {#2}\rightrightarrows }}
\newcommand\pist{\pi_{*}^{s}}
\renewcommand\P{\mathbb P}
\newcommand\id{\mathrm{Id}}
\newcommand\sgx{\susp \G X}
\newcommand\comod{\cat{Comod}}
\newcommand\cC{\mathcal C}
\newcommand\cD{\mathcal D}
\newcommand\cL{\mathcal L}
\newcommand\colim{\operatorname{colim}}
\newcommand\map{\operatorname{Map}}
\newcommand\Tot{\operatorname{Tot}}
\newcommand{\iso}{\cong}
\newcommand\soxp{\susp \Omega X}
\newcommand\sxp{\susp X}
\newcommand{\sm}{\wedge}
\newcommand{\cH}{\mathcal H}
\newcommand{\THH}{\textnormal{THH}}
\newcommand{\coTHH}{\textnormal{coTHH}}
\newcommand{\tensor}{\otimes}
\renewcommand{\P}{\mathcal P}
\newcommand\si{s^{-1}}
\newcommand\sdr[4]{\xymatrix{#1\ar @<1ex>[rr]^-{#3}&&#2\ar @<1ex>[ll]^-{#4}}}
\newcommand\adjunction[4]{\xymatrix{#1\ar @<1.18ex>[rr]^{#3}&\perp&#2\ar @<1.18ex>[ll]^{#4}}}
\newcommand{\hugeadjunction}[4]{\xymatrix{ #1 \ar@<1.18ex>[rrr]^-{#3} &&& #2 \ar@<1.18ex>[lll]^-{#4}}}
\newcommand{\giantadjunction}[4]{\xymatrix{ #1 \ar@<1.18ex>[rrrr]^-{#3} &&\perp&& #2 \ar@<1.18ex>[llll]^-{#4}}}
\newcommand{\bigadjunction}[4]{\xymatrix{ #1 \ar@<1ex>[rr]^-{#3} && #2 \ar@<1ex>[ll]^-{#4}}}
\newcommand{\cohoch}{\widehat{\mathcal H}}
\newcommand{\hoch}{{\mathcal H}}
\newcommand{\Thick}{\operatorname{Thick}}
\newcommand{\wb}{\overline}
\newcommand{\oL}{\wb L}
\newcommand{\oR}{\wb R}
\newcommand{\oT}{\wb \Tot\, }
\newcommand{\oLdot}{\wb \Ldot}
\newcommand{\Ldot}{L^{\bullet}}
\newcommand{\oRdot}{\wb \Rdot}
\newcommand{\Rdot}{R^{\bullet}}
\newcommand{\con}{c^{\bullet}}
\newcommand{\Cobar}{\textnormal{Cobar}}
\newcommand{\bBar}{\textnormal{Bar}}
\address{EPFL SV BMI UPHESS, Station 8, CH-1015 Lausanne, Switzerland}
\email{kathryn.hess@epfl.ch}
\address{Department of Mathematics, Statistics, and Computer Science, University of Illinois at
Chicago, 508 SEO m/c 249,
851 S. Morgan Street,
Chicago, IL, 60607-7045, USA}
    \email{shipleyb@uic.edu}
    \keywords{ Topological Hochschild homology, coalgebras}
\subjclass[2010]{Primary:{ 19D55, 16T15; Secondary: 16E40, 55U35, 55P43 }}
\begin{document}
\title{Invariance properties of coHochschild homology}
\author{Kathryn Hess}
\author{Brooke Shipley}

\date{\today}
\begin{abstract} The notion of Hochschild homology of a dg algebra admits a natural dualization, the coHochschild homology of a dg coalgebra, introduced in \cite{hps} as a tool to study free loop spaces.  In this article we prove ``agreement'' for coHochschild homology, i.e., that the coHochschild homology of a dg coalgebra $C$ is isomorphic to the Hochschild homology of the dg category of appropriately compact $C$-comodules, from which Morita invariance of coHochschild homology follows.  Generalizing the dg case, we define the topological coHochschild homology (coTHH) of coalgebra spectra, of which suspension spectra are the canonical examples, and show that coTHH of the suspension spectrum of a space $X$ is equivalent to the suspension spectrum of the free loop space on $X$, as long as $X$ is a nice enough space {(for example, simply connected.)}   Based on this result and on a Quillen equivalence established in \cite{HS16}, we prove that ``agreement'' holds for coTHH as well.  
\end{abstract}

\maketitle
\tableofcontents
\section{Introduction}
For any commutative ring $\Bbbk$, the classical definition of Hochschild homology of $\Bbbk$-algebras \cite{loday} admits a straightforward extension to differential graded (dg)  $\Bbbk$-algebras.  In \cite{mccarthy} McCarthy extended the definition of Hochschild homology in another direction, to $\Bbbk$-exact categories, seen as $\Bbbk$-algebras with many objects. As Keller showed in \cite{keller:cyclic}, there is a common refinement of these two extended definitions to dg categories, seen as dg algebras with many objects.   This invariant of dg categories satisfies many useful properties, including ``agreement'' (the Hochschild homology of a dg algebra is isomorphic to that of the dg category of compact modules) \cite [2.4] {keller:cyclic} and Morita invariance (a functor in the homotopy category of dg categories that induces an isomorphism between the subcategories of compact objects also induces an isomorphism on Hochschild homology) \cite[4.4]{toen}.

The notion of Hochschild homology of a differential graded (dg) algebra admits a natural dualization, the \emph{coHochschild homology} of a dg coalgebra, which was introduced by Hess, Parent, and Scott in \cite{hps}, generalizing the non-differential notion of \cite{doi}. They showed in particular that the coHochschild homology of the chain coalgebra on a simply connected space $X$ is isomorphic to the homology of the free loop space on $X$ and that the coHochschild homology of a connected dg coalgebra $C$ is isomorphic to the Hochschild homology of $\Om C$, the cobar construction on $C$.  

In this article we establish further properties of coHochschild homology, analogous to the invariance properties of Hochschild homology recalled above.  We first prove a sort of categorification of the relation between coHochschild homology of a connected dg coalgebra $C$ and the Hochschild homology of $\Om C$, showing that there is a dg Quillen equivalence between the categories of $C$-comodules and of $\Om C$-modules (Proposition \ref{prop:cobar}).  We can then establish an ``agreement''-type result, stating that the coHochschild homology of a dg coalgebra $C$ is isomorphic to the Hochschild homology of the dg category spanned by certain compact $C$-comodules (Proposition ~\ref{prop:cohoch-props}).  Thanks to this agreement result, we can show as well that coHochschild homology is a Morita invariant (Proposition ~\ref{prop.MT}), using the notion of Morita equivalence of dg coalgebras formulated in \cite{berglund-hess}, which extends that of Takeuchi \cite{takeuchi} and which we recall here.  Proving these results required us to provide criteria under which a dg Quillen equivalence of dg model categories induces a quasi-equivalence of dg subcategories (Lemma \ref{lem:q-e}); this technical result, which we were unable to find in the literature, may also be useful in other contexts.

The natural analogue of Hochschild homology for spectra, called \emph{topological Hochschild homology (THH)}, has proven to be an important and useful invariant of ring spectra, particularly because of its connection to K-theory via the Dennis trace.  Blumberg and Mandell proved moreover that THH satisfies both ``agreement,'' in the sense that THH of a ring spectrum is equivalent to THH of the spectral category of appropriately compact $R$-modules, and Morita invariance \cite{BM}

We define here an analogue of coHochschild homology for spectra, which we call \emph{topological coHochschild homology (coTHH)}.  We show that coTHH is homotopy invariant, as well as independent of the particular model category of spectra in which one works.    We prove moreover that coTHH of the suspension spectrum $\susp X$ of a connected Kan complex $X$ is equivalent to $\susp \mathcal LX$, the suspension spectrum of the free loop space on $X$, whenever $X$ is \emph{EMSS-good}, i.e,  whenever $\pi_{1}X$ acts nilpotently on the integral homology of the based loop space on $X$  (Theorem~\ref{thm.free.loop}).  

This equivalence was already known for simply connected spaces $X$, by work of Kuhn \cite{Kuhn} and Malkiewicz \cite{caryM}, though they did not use the term coTHH. The extension of the equivalence to EMSS-good spaces is based on new results concerning total complexes of cosimplicial suspension spectra, such as the fact that 
$\oT (\Sigma^{\infty}Y^{\bullet}) \simeq \Sigma^{\infty} \oT Y^{\bullet}$ whenever the homology spectral sequence for a cosimplicial space $Y^{\bullet}$ with coefficients in $\bZ$ strongly converges (Corollary \ref{cor-conv}). We also show that if $X$ is an EMSS-good space, then the Anderson spectral sequence for homology with coefficients in $\bZ$  for the cosimplicial space $ \map(S^1_{\bullet}, X)$ strongly converges to $H_*(\cL X; \bZ)$ (Proposition \ref{prop.anderson}).  

 In \cite{bok-wald}, B\"okstedt and Waldhausen proved that $\THH(\susp \Om X)\simeq \susp \mathcal LX$ for simply connected $X$.  It follows thus from Theorem \ref{thm.free.loop} that if $X$ is simply connected, then $\THH(\susp \Om X)\simeq \coTHH(\susp X)$, analogous to the result for dg coalgebras established in \cite{hps}.   Combining this result with the spectral Quillen equivalence between categories of $\susp \Om X$-modules and of $\susp X$-comodules established in \cite {HS16} and with THH-agreement \cite{BM}, we obtain coTHH-agreement for simply connected Kan complexes $X$:  $\coTHH(\susp X)$ is equivalent to THH of the spectral category of appropriately compact $\susp \Om X$-modules (Corollary \ref{spec.agree}).  

We do not consider Morita invariance for coalgebra spectra in this article, as the duality requirement of the framework in  \cite{berglund-hess} is too strict to allow for  interesting spectral examples. We expect that a meaningful formulation should be possible in the $\infty$-category context.

In parallel with writing this article,  the second author collaborated with Bohmann, Gerhardt, H\o genhaven, and Ziegenhagen on developing computational tools for coHochschild homology, in particular an analogue of the B\"okstedt spectral sequence for topological Hochschild homology constructed by Angeltveit and Rognes~\cite{Angeltveit-Rognes}. For $C$ a coalgebra spectrum, the $E_2$-page of this spectral sequence is the associated graded of the classical coHochschild homology of the homology of $C$ with coefficients in a field $\Bbbk$, and the spectral sequence abuts to the $\Bbbk$-homology of  $\coTHH(C)$. If $C$ is connected and cocommutative, then this is a spectral sequence of coalgebras.  In \cite{BGHSZ} the authors also proved a Hochschild-Kostant-Rosenberg-style theorem for coHochschild homology of cofree cocommutative differential graded coalgebras.

In future work we will construct and study an analogue of the Dennis trace map, with source the K-theory of a dg or spectral coalgebra $C$  and with target its (topological) coHochschild homology.

{{\em Acknowledgements:}
Work on this article began while the authors were in residence at the Mathematical Sciences Research Institute in Berkeley, California, during Spring 2014, partially supported by the National Science Foundation under Grant No. 0932078000. The second author was also supported by NSF grants DMS-1104396, DMS-1406468, and DMS-1811278 during this work. We would also like to thank the University of Illinois at Chicago, the EPFL, and the University of Chicago for their hospitality during research visits enabling us to complete the research presented in this article.  The authors would also like to thank the Isaac Newton Institute for
   Mathematical Sciences, Cambridge, for support and hospitality during
   the Fall 2018 program ``Homotopy harnessing higher structures" where this paper was
   finished. This work was supported by EPSRC grant no EP/K032208/1. }


\section{CoHochschild homology for chain coalgebras}\label{sec:dg}

In this section we recall from \cite{hps} the coHochschild complex of a chain coalgebra over a field $\Bbbk$, which generalizes the definitions in {\cite{doi}} and in {\cite{idrissi}} and dualizes the usual definition of the Hochschild complex of a chain algebra.  We establish important properties of this construction analogous to those known to hold for Hochschild homology:  ``agreement'' (in the sense of \cite{mccarthy}) and Morita invariance.

\begin{notn}Throughout this section we work over a field $\Bbbk$ and write $\tensor$ to denote the tensor product over $\Bbbk$ and $|v|$ to denote the degree of a homogeneous element $v$ of a graded vector space.
\begin{itemize}
\item We denote the category of (unbounded) graded chain complexes over $\Bbbk$ by $\cat{Ch}_\Bbbk$, the category of augmented, nonnegatively graded chain algebras (dg algebras) over $\Bbbk$ by $\cat {Alg}_{\Bbbk}$, and the category of coaugmented, connected (and hence nonnegatively graded) chain coalgebras (dg coalgebras) by $\cat{Coalg}_{\Bbbk}$.   All of these categories are naturally dg categories, i.e., enriched over $\cat{Ch}_\Bbbk$, with $\cat{Alg}_{\Bbbk}$ and $\cat{Coalg}_{\Bbbk}$ inheriting their enrichments from that of $\cat{Ch}_{\Bbbk}$.
\item We apply the Koszul sign convention for commuting elements  of a graded vector space or for commuting a morphism of graded vector spaces past an element of the source module.  For example,  if $V$ and $W$ are graded algebras and $v\otimes w, v'\otimes w'\in V\otimes W$, then $$(v\otimes w)\cdot (v'\otimes w')=(-1)^{|w|\cdot |v'|}vv'\otimes ww'.$$ Furthermore, if $f:V\to V'$ and $g:W\to W'$ are morphisms of graded vector spaces, then for all $v\otimes w\in V\otimes W$, 
$$(f\otimes g)(v\otimes w)=(-1)^{|g|\cdot |v|} f(v)\otimes g(w).$$
All signs in the formulas below follow from the Koszul rule. It is a matter of straightforward calculation in each case to show that differentials square to zero.
\item The \emph {desuspension} endofunctor $\si$ on the category of graded vector spaces is defined on objects $V=\bigoplus _{i\in \mathbb Z} V_ i$ by
$(\si V)_ i \cong V_ {i+1}$.  Given a homogeneous element $v$ in
$V$, we write $\si v$ for the corresponding element of $\si V$. 
\item Given chain complexes $(V,d)$ and $(W,d)$, the notation
$f:(V,d)\xrightarrow{\simeq}(W,d)$ indicates that $f$ induces an isomorphism in homology. 
In this case we refer to $f$ as a \emph {quasi-isomorphism}.
\item \cite[Section 2.3]{toen} A \emph{quasi-equivalence} of dg categories is a dg functor $F:\cat C \to \cat D$ such that $F_{X,X'}:\hom_{\cat C}(X,X') \to \hom_{\cat D}\big(F(X), F(X')\big)$ is a quasi-isomorphism for all $X,X'\in \ob \cat C$ (i.e., $F$ is \emph{quasi-fully faithful}) and such that the induced functor on the \emph{homology categories}, $H_0F: H_0 \cat C \to H_0\cat D$, is essentially surjective, i.e., $F$ is \emph{quasi-essentially surjective}.  The objects of the homology category $H_0\cat C$, which is a dg category in which the hom-objects have zero differential, are the same as those of $\cat C$, while hom-objects are given by the $0^{\text{th}}$-homology of the hom-objects of $\cat C$. 
\item Let $T$ denote the endofunctor on the category of graded vector spaces given by
$$TV=\oplus _{n\geq 0}V^{\otimes n},$$
where $V^{\otimes 0}= \Bbbk$.  An element of the summand $V^{\otimes n}$ of $TV$ is denoted $v_{1}|\cdots |v_{n}$, where $v_{i}\in V$ for all $i$. 
\item The coaugmentation coideal of any $C$ in $\cat {Coalg}_{\Bbbk}$ is denoted $\overline C$.
\item We consistently apply the Einstein summation convention, according to which an expression involving a term with the same letter as a subscript and a superscript denotes a sum over that index, e.g., $c_{i}\otimes c^{i}$ denotes a sum of elementary tensors over the index $i$.
\end{itemize}
\end{notn}

\subsection{The dg cobar construction and its extensions}

Let $\Om$ denote the \emph{cobar construction} functor from $\cat {Coalg}_{\Bbbk}$ to $\cat {Alg}_{\Bbbk}$, defined by 
$$\Om C= \left(T (\si \overline C), d_{\Om}\right)$$
where, if $d$ denotes the differential on $C$, then
\begin{align*}
d_{\Om}(\si c_{1}|\cdots|\si c_{n})=&\sum _{1\leq j\leq n}\pm \si c_{1}|\cdots |\si (dc_{j})|\cdots |\si c_{n}\\ 
&+\sum _{1\leq j\leq n}\pm \si c_{1}|...|\si c_{ji}|\si c_{j}{}^{i}|\cdots |\si c_{n},
\end{align*}
with signs determined by the Koszul rule, where the reduced comultiplication applied to $c_{j}$ is $c_{ji}\otimes c_{j}{}^{i}$.  A straightforward computation shows that $\Om C$ is isomorphic to the totalization of the cosimplicial cobar construction if $C$ is $1$-connected (i.e., $C$ is connected and $C_{1}=0$).

The graded vector space underlying $\Om C$ is naturally a free associative algebra, with multiplication given by concatenation. The differential $d_{\Om }$ is a derivation with respect to this concatenation product, so that $\Om C$ is itself a chain algebra.  Any chain algebra map $\alpha:\Om C\to A$ is determined by its restriction to the algebra generators $\si \overline C$. 

The following two extensions of the cobar construction play an important role below.   Let $\cat {Mix}_{C,\Om C}$ and $\cat {Mix}_{\Om C,C}$ denote the categories of left $C$-comodules in the category of right $\Om C$-modules and of right $C$-comodules in the category of left $\Om C$-modules, respectively.  We call the objects of these categories \emph{mixed modules}.  There are functors
$$\P_{L}: \cat {Coalg}_{\Bbbk}\to \cat {Mix}_{C, \Om C}\quad\text{and}\quad \P_{R}: \cat {Coalg}_{\Bbbk}\to \cat {Mix}_{ \Om C,C},$$
which we call the \emph{left and right based path constructions} on $C$ (where left and right refer to the side of the $C$-coaction) and which are defined as follows.
$$\P_{L}C= \left( C\otimes T (\si \overline C), d_{\P_{L}}\right)\quad\text{and}\quad \P_{R}C= \left(T (\si \overline C)\otimes C, d_{\P_{R}}\right),$$
where
\begin{align*}
d_{\P_{L}}(e\otimes \si c_{1}|\cdots|\si c_{n})=&de\otimes \si c_{1}|\cdots|\si c_{n}\;\pm e\otimes d_{\Om}(\si c_{1}|\cdots|\si c_{n})\\
&\pm e_{j}\otimes \si e^{j}|\si c_{1}|\cdots|\si c_{n}\\
\\
d_{\P_{R}}(\si c_{1}|\cdots|\si c_{n}\otimes e)=&d_{\Om}(\si c_{1}|\cdots|\si c_{n})\otimes e\;\pm \si c_{1}|\cdots|\si c_{n}\otimes de\\
&\pm \si c_{1}|\cdots|\si c_{n}|\si e_{j}\otimes e^{j},
\end{align*}
where $\Delta (e)=e_{j}\otimes e^{j}$, and applying $\si$ to an element of degree 0 gives $0$.
For every $C$ in $\cat{Coalg}_{\Bbbk}$, there are twisted extensions of chain complexes
$$\xymatrix@1{\Om C\cof^{\eta \otimes 1} &\P_{L}C\fib^{1\otimes \ve} &C&&&\Om C\cof^{1\otimes \eta} &\P_{R}C\fib^{\ve\otimes 1} &C,}
$$
which are dg analogues of the based pathspace fibration, where $\eta:\Bbbk \to C$ is the coaugmentation and $\ve: \Om C \to \Bbbk$ the obvious augmentation.

As proved in \cite[Proposition 10.6.3]{neisendorfer}, both $\P_{L}C$ and $\P_{R}C$ are homotopy equivalent to the trivial mixed module $\Bbbk$, for all $C$ in $\cat {Coalg}_{\Bbbk}$, via a chain homotopy defined in the case of $\P_{R}C$ by

{\small $$h_{R}: \P_{R}C\to \P_{R}C: w\otimes e \mapsto \begin{cases} 0 &: |e|>0 \text{ or } w=1 \\ \si c_{1}|\cdots | \si c_{n-1}\otimes c_{n} &: |e|=0 \text{ and } w= \si c_{1}|\cdots | \si c_{n}\end{cases}$$}

\noindent and analogously in the case of $\P_{L}C$.  Observe that, when restricted to the sub $\Om C$-module of elements in positive degree, $h_{R}$ is a homotopy of left $\Om C$-modules, while $h_{L}$ is a homotopy of right $\Om C$-modules.

The proposition below generalizes this contractibility result.

\begin{prop}\label{prop:SDR} There are strong deformation retracts
\begin{enumerate}
\item $\sdr{\Om C}{\P_{R}C\square_{C} \P_{L}C}{\sigma}{\pi}$
in the category of left $\Om C$-modules, and 
\item $\sdr{C}{\P_{L}C \otimes_{\Om C}\P_{R}C}{\iota}{\rho}$
in the category of left $C$-comodules.
\end{enumerate}
\end{prop}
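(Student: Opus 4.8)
The plan is to obtain both retracts by transporting the contractions of $\P_R C$ and $\P_L C$ recorded above through the relevant (co)tensor product, exploiting that each based path construction is free over $\Om C$ on one side and cofree over $C$ on the other. As graded vector spaces $\P_L C = C\otimes\Om C$ is free as a right $\Om C$-module on $C$ and cofree as a left $C$-comodule on $\Om C$, and symmetrically $\P_R C = \Om C\otimes C$ is free as a left $\Om C$-module on $C$ and cofree as a right $C$-comodule on $\Om C$. Using freeness of $\P_L C$ in (2) and cofreeness of $\P_R C$ in (1), I would first identify
\[
\P_L C\otimes_{\Om C}\P_R C \;\cong\; C\otimes\P_R C
\qquad\text{and}\qquad
\P_R C\,\square_C\,\P_L C \;\cong\; \Om C\otimes\P_L C
\]
as graded objects carrying, respectively, the left $C$-comodule structure on the outer factor $C$ and the left $\Om C$-module structure on the outer factor $\Om C$. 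Each inherits a differential that splits as $D=D_{0}+T$, where $D_{0}=d\otimes 1 + 1\otimes d$ is the untwisted differential and $T$ is the single cross-term coming from the twisting part of $d_{\P_{L}}$ (resp.\ $d_{\P_{R}}$) that the identification fails to absorb.

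For part (2), tensoring $\id_{C}$ with the contraction $h_{R}\colon\P_{R}C\simeq\Bbbk$ gives a strong deformation retract (SDR) of $(C\otimes\P_{R}C,\,D_{0})$ onto $C\otimes\Bbbk\cong C$, with homotopy $\id_{C}\otimes h_{R}$ and with $\iota,\rho$ the evident inclusion and projection. The key point is that the cross-term acts on the second factor through the \emph{left} $\Om C$-module structure of $\P_{R}C$, namely $T(e\otimes x)=\pm e_{j}\otimes(\si e^{j}\cdot x)$ with reduced comultiplication $e_{j}\otimes e^{j}$. Since $h_{R}$ is a homotopy of left $\Om C$-modules in positive degrees, a Koszul-sign bookkeeping shows $T(\id_{C}\otimes h_{R})+(\id_{C}\otimes h_{R})T=0$, so $\id_{C}\otimes h_{R}$ survives as a contracting homotopy for the full differential $D$; the side conditions $H^{2}=0$ and $\rho H=0=H\iota$ descend from those for $h_{R}$. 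This gives the SDR of left $C$-comodules claimed in (2).

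For part (1), the same strategy produces an SDR of $(\Om C\otimes\P_{L}C,\,D_{0})$ onto $\Om C$ with homotopy $\id_{\Om C}\otimes h_{L}$, but now the cross-term is \emph{transverse} to this homotopy: it appends a generator to the outer $\Om C$ using the \emph{left $C$-coaction} of $\P_{L}C$, namely $T\big(\omega\otimes(e\otimes w)\big)=\pm(\omega\,|\,\si e_{j})\otimes(e^{j}\otimes w)$, a structure that the right $\Om C$-module homotopy $h_{L}$ does not respect, so $T$ does not anticommute with $\id_{\Om C}\otimes h_{L}$. I would therefore correct the homotopy by homological perturbation: since $C$ is connected, the reduced coaction strictly lowers the $C$-degree while $T$ strictly raises the word length of the outer $\Om C$, so $T$ is locally nilpotent and the perturbed homotopy $\sum_{k\ge 0}(\id_{\Om C}\otimes h_{L})\big(T\,(\id_{\Om C}\otimes h_{L})\big)^{k}$ is a locally finite sum. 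This yields a genuine SDR onto $\Om C$ (with correspondingly corrected $\sigma$ and $\pi$); because both $T$ and $\id_{\Om C}\otimes h_{L}$ are left $\Om C$-linear---the twist appends on the right, commuting with the left action---every term of the series is left $\Om C$-linear, so the retract lives in left $\Om C$-modules, as required.

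The main obstacle is the bookkeeping tying the twisted differentials to the (co)module structures. For (2) the crux is the vanishing $T(\id_{C}\otimes h_{R})+(\id_{C}\otimes h_{R})T=0$, which hinges on $h_{R}$ being a module homotopy together with the correct Koszul signs; the degree-zero locus, where $h_{R}$ is only a chain homotopy, is controlled by the connectivity of $C$, since there $\si$ annihilates the offending terms. For (1) the real work is justifying the perturbation step: verifying local nilpotence of $T$ from connectivity and checking that the perturbation series preserves both left $\Om C$-linearity and the strong-deformation-retract side conditions.
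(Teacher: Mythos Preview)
Your overall architecture is sound, and for part~(1) the perturbation argument is valid; in fact, if you unwind the series you recover exactly the paper's explicit homotopy and its maps $\sigma,\pi$ (the perturbed projection sends $v\otimes 1\otimes \si c_{1}|\cdots|\si c_{n}$ to $v|\si c_{1}|\cdots|\si c_{n}$, which is the paper's $\pi$).

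There is, however, a genuine gap in part~(2). Your key claim that $T(\id_{C}\otimes h_{R})+(\id_{C}\otimes h_{R})T=0$ is false. The failure occurs precisely on elements $e\otimes(1\otimes 1)$ with $|e|>0$: there $h_{R}(1\otimes 1)=0$, while $T(e\otimes 1\otimes 1)=\sum e_{j}\otimes(\si e^{j}\otimes 1)$ and $h_{R}(\si e^{j}\otimes 1)=1\otimes e^{j}\neq 0$. Your ``$\si$ annihilates the offending terms'' remark does not help, since the problematic case is $|e|>0$, $|x|=0$, where $\si e^{j}$ need not vanish. As a consequence your ``evident'' inclusion $c\mapsto c\otimes 1\otimes 1$ is \emph{not} a chain map for the full differential $D$: one computes $D(c\otimes 1\otimes 1)=dc\otimes 1\otimes 1 + c_{j}\otimes\si c^{j}\otimes 1$, which does not equal $\iota(dc)$.

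The fix is easy and uses your own toolkit: run the perturbation lemma in part~(2) as well. The side conditions hold, and the series for the perturbed inclusion terminates after one step, yielding
\[
\iota(c)=c\otimes 1\otimes 1+\sum_{|c^{j}|\geq 1}c_{j}\otimes 1\otimes c^{j}=c_{i}\otimes 1\otimes c^{i},
\]
which is exactly the comultiplication-type map the paper writes down. The homotopy $\id_{C}\otimes h_{R}$ is unchanged (this is also the paper's $\id_{\P_{L}C}\otimes_{\Om C}h_{R}$ under your identification), and $\rho$ is unperturbed. So your strategy is right; only the assertion that no perturbation is needed in~(2) is wrong.
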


\begin{proof} Note that the graded vector space underlying $\P_{R}C\square_{C} \P_{L}C$ is isomorphic to $T(\si \overline C)\otimes C \otimes T(\si \overline C)$, while that underlying $\P_{L}C \otimes_{\Om C}\P_{R}C$ is isomorphic to $C\otimes T(\si \overline C) \otimes C$.

In the $\Om C$-module case, we define left $\Om C$-module maps
$$\pi:\P_{R}C\square_{C} \P_{L}C \to \Om C: v\otimes c \otimes w\mapsto \begin{cases} \ve(c)\cdot vw&: |c|=0\\ 0&: |c|\not=0,\end{cases}$$
where $\ve: C\to \Bbbk$ denotes the counit, and  
$$\sigma: \Om C \to \P_{R}C\square_{C} \P_{L}C: w \mapsto w\otimes 1 \otimes 1.$$
While it is obvious that $\pi\sigma$ is the identity, showing that $\sigma\pi$ is homotopic to the identity requires  a new chain homotopy $h: \P_{R}C\square_{C} \P_{L}C \to \P_{R}C\square_{C} \P_{L}C$  defined by
{\small $$h(1\otimes c\otimes \si c_{1}| \cdots | \si c_{n})=\begin{cases} \sum _{1\leq i\leq n}\pm \si c_{1}|...|\si c_{i-1}\otimes c_{i}\otimes \si c_{i+1}| \cdots | \si c_{n} &: |c|=0\\ 0&: |c|\not=0,\end{cases}$$}

\noindent then extended to a map of left $T(\si \overline C)$-modules.
A straightforward computation shows that $Dh +hD= \id - \sigma\pi$ as desired, where $D$ denotes the differential on $\P_{R}C\square_{C} \P_{L}C$.  

Let $\Delta$ denote the comultiplication on $C$. In the $C$-comodule case, we define left $C$-comodule maps by
$$\iota: C\to \P_{L}C \otimes_{\Om C}\P_{R}C: c \mapsto c_{i}\otimes 1 \otimes c^{i},$$
where $\Delta (c)=c_{i}\otimes c^{i}$,
and  
$$\rho:  \P_{L}C \otimes_{\Om C}\P_{R}C \to C: c \otimes w \otimes c' \mapsto \begin{cases} c &: |w|=|c'|=0\\ 0&: \text{else.}\end{cases}$$
It is obvious that $\rho\iota$ is equal to the identity and that $\iota \rho$ is chain homotopic to the identity as left $C$-comodules, via the chain homotopy $\id_{\P_{L}C}\otimes_{\Om C}h_{R}$, which itself respects the left $C$-coaction as well.
\end{proof}

Our interest in the left and right based path constructions stems from the following proposition.

\begin{prop}\label{prop:bp-adjunction} The pair of functors
$$\hugeadjunction{\cat{Comod}_{C}}{\cat{Mod}_{\Om C}}{-\square_{C}\P_{L}C}{-\otimes _{\Om C} \P_{R}C}$$
forms a dg-adjunction.
\end{prop}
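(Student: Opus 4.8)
The plan is to exhibit a natural isomorphism of hom-complexes
$$\Hom_{\Om C}\big(M\square_{C}\P_{L}C,\, N\big)\;\cong\;\Hom_{C}\big(M,\, N\otimes_{\Om C}\P_{R}C\big)$$
for $M\in\cat{Comod}_{C}$ and $N\in\cat{Mod}_{\Om C}$, natural in both variables. Since a dg-adjunction is precisely an adjunction enriched in $\cat{Ch}_{\Bbbk}$, such an isomorphism of chain complexes (not merely of their degree-zero cycles) is exactly what is required. First I would record that the two functors are well defined: because $\P_{L}C$ is a mixed module in $\cat{Mix}_{C,\Om C}$, cotensoring a right $C$-comodule $M$ over $C$ along its left $C$-coaction leaves the right $\Om C$-action untouched, so $M\square_{C}\P_{L}C\in\cat{Mod}_{\Om C}$; dually $\P_{R}C\in\cat{Mix}_{\Om C,C}$ makes $N\otimes_{\Om C}\P_{R}C$ a right $C$-comodule.

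The construction rests on two (co)freeness identifications. Since $\P_{L}C=C\otimes T(\si\overline C)$ is free as a right $\Om C$-module and cofree as a left $C$-comodule (with coaction $\Delta\otimes\id$ on the tensor factor $C$), the standard cofree-cotensor isomorphism gives $M\square_{C}\P_{L}C\cong M\otimes\Om C$ as graded right $\Om C$-modules, free on the graded vector space $M$; restriction to the generators $M\otimes 1$ then yields a graded isomorphism $\Hom_{\Om C}(M\square_{C}\P_{L}C,N)\cong\Hom_{\Bbbk}(M,N)$. Dually, $\P_{R}C=T(\si\overline C)\otimes C$ is free as a left $\Om C$-module and cofree as a right $C$-comodule, so $N\otimes_{\Om C}\P_{R}C\cong N\otimes C$ is the cofree right $C$-comodule on $N$, and corestriction along $\id_{N}\otimes\ve\colon N\otimes C\to N$ gives a graded isomorphism $\Hom_{C}(M,N\otimes_{\Om C}\P_{R}C)\cong\Hom_{\Bbbk}(M,N)$. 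Composing these, both sides of the proposed adjunction isomorphism are identified, as graded vector spaces, with $\Hom_{\Bbbk}(M,N)$, manifestly naturally in $M$ and $N$.

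The crux is then to check that the two differentials transported onto $\Hom_{\Bbbk}(M,N)$ coincide. Transporting $d_{\P_{L}}$ through the cofree-cotensor isomorphism, the differential on $M\otimes\Om C$ acquires, beyond the internal terms, a twisting term $m\otimes w\mapsto\pm\,m_{(0)}\otimes t(m_{(1)})\cdot w$, where $\rho_{M}(m)=m_{(0)}\otimes m_{(1)}$ is the coaction, $t\colon C\to\Om C,\ t(c)=\si\overline c$ is the universal twisting cochain, and the identification of this term uses coassociativity of $\rho_{M}$; the induced differential on $\Hom_{\Bbbk}(M,N)$ sends $g$ to $d_{N}g-(-1)^{|g|}\big(g\,d_{M}\pm (g(m_{(0)})\cdot t(m_{(1)}))\big)$, the last term being $(g\otimes t)\rho_{M}$ followed by the $\Om C$-action on $N$. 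Running the same transport on the comodule side, $d_{\P_{R}}$ contributes a term $n\otimes e\mapsto\pm\,(n\cdot t(e_{j}))\otimes e^{j}$ with $\Delta e=e_{j}\otimes e^{j}$; applying $\id_{N}\otimes\ve$ and invoking the counit axiom collapses $\sum\si(e_{j})\,\ve(e^{j})=\si e$, so the induced differential on $\Hom_{\Bbbk}(M,N)$ is given by the very same formula. I expect the main obstacle to be the careful bookkeeping of Koszul signs in these two transports — ensuring that the signs on the twisting terms agree is exactly the point at which one uses that $t$ is a twisting cochain — rather than any conceptual difficulty. Once the differentials are seen to coincide, the graded isomorphism of the previous paragraph upgrades to an isomorphism of chain complexes, natural in $M$ and $N$; its degree-zero cycles recover the bijection on morphism sets, and the full isomorphism of hom-complexes establishes the dg-adjunction.
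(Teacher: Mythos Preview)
Your argument is correct and follows the classical ``twisting cochain'' route to this adjunction: you identify both hom-complexes with $\Hom_{\Bbbk}(M,N)$ via the (co)freeness of $\P_{L}C$ and $\P_{R}C$ on the appropriate sides, and then verify that the two transported differentials agree, the extra twisting term on each side being governed by the universal twisting cochain $t\colon C\to\Om C$, $c\mapsto\si\overline c$. This is a sound and well-known approach.

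The paper proceeds differently. Rather than exhibiting the hom-complex isomorphism directly, it uses the unit/counit formulation: the maps $\iota\colon C\to \P_{L}C\otimes_{\Om C}\P_{R}C$ and $\pi\colon \P_{R}C\square_{C}\P_{L}C\to\Om C$ constructed in Proposition~\ref{prop:SDR} are, respectively, a $C$-bicomodule map and an $\Om C$-bimodule map, so applying $-\square_{C}\iota$ and $-\otimes_{\Om C}\pi$ yields the unit and counit, and the triangle identities are left as an exercise. The dg-enrichment of the two functors is observed separately, via the equalizer descriptions of the enriched hom-objects. The paper's route is more economical in that it recycles the explicit maps already built for the strong deformation retracts, and it avoids any sign bookkeeping; your route, by contrast, makes the adjunction isomorphism completely explicit and exposes the twisting-cochain structure that underlies it, which is exactly what is used later (in the proof of Proposition~\ref{prop:cobar}) when the differentials $D_{L}$ and $D_{R}$ are written out.
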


\begin{proof} It is well known that the dg-enrichments of $\cat{Comod}_{C}$ and $\cat{Mod}_{\Om C}$ can be constructed as equalizers in $\cat {Ch}_{\Bbbk}$, as follows.  For right $C$-comodules $N$ and $N'$ with $C$-coactions $\rho$ and $\rho'$,
$$\underline{\cat{Comod}}\,_{C}(N,N')=\lim \left(\underline{\cat {Ch}}\,_{\Bbbk} (N,N')   \egal{\rho'_{*}}{\rho^{*}\circ (-\otimes C)} \underline{\cat {Ch}}\,_{\Bbbk} (N,N'\otimes C) \right),$$
where the underline denotes the hom-chain complex, as opposed to the hom-set.
For right $\Om C$-modules $M$ and $M'$ with $\Om C$-actions $\alpha$ and $\alpha'$,
$$\underline{\cat{Mod}}\,_{\Om C}(M,M')=\lim \left(\underline{\cat {Ch}}\,_{\Bbbk} (M,M')   \egal{\alpha'_{*}\circ (-\otimes \Om C)}{\alpha^{*}} \underline{\cat {Ch}}\,_{\Bbbk} (M\otimes \Om C, M') \right).$$
It is easy to see from these constructions that both of the functors in the statement of the proposition are dg-enriched, essentially because on the underlying graded vector spaces, both functors are given by tensoring with a fixed object.

Since $\pi:\P_{R}C\square_{C} \P_{L}C \to \Om C$ is actually a map of $\Om C$-bimodules, while $\iota: C\to \P_{L}C \otimes_{\Om C}\P_{R}C$ is a map of $C$-bicomodules, there are dg-natural transformations
$$\id \cong -\square _{C}C \xrightarrow {-\square _{C}\iota} -\square_{C}(\P_{L}C \otimes_{\Om C}\P_{R}C)$$
and 
$$-\otimes _{\Om C}(\P_{R}C\square_{C} \P_{L}C) \xrightarrow {-\otimes _{\Om C} \pi} -\otimes_{\Om C}\Om C \cong \id,$$
which provide the unit and counit of the adjunction.  It is an easy exercise to verify the triangle inequalities.  
\end{proof}

The extension of the cobar construction that is the focus of this article is the  \emph{coHochschild complex} functor
$$\cohoch: \cat{Coalg}_{\Bbbk}\to \cat {Ch}_{\Bbbk},$$ 
defined as follows \cite{hps}.  
Let $C$ be a connected, coaugmented chain coalgebra with comultiplication $\Delta(c)=c_{i}\otimes c^{i}$. We then let 
$$\cohoch(C)= \left(C\otimes T (\si \overline C), d_{\cohoch}\right)$$
where
\begin{align*}
d_{\cohoch}(e\otimes \si c_{1}|\cdots|\si c_{n})=&de\otimes \si c_{1}|\cdots|\si c_{n}\;\pm e\otimes d_{\Om}(\si c_{1}|\cdots|\si c_{n})\\
&\pm e_{j}\otimes \si e^{j}|\si c_{1}|\cdots|\si c_{n}\\
& \pm e^{i}\otimes \si c_{1}|\cdots|\si c_{n}|\si e_{i},
\end{align*}
where $\Delta (e)=e_{j}\otimes e^{j}$, and applying $\si$ to an element of degree 0 gives $0$. The signs follow from the Koszul rule, as usual.  As in the case of the cobar construction, it is not hard to show that $\cohoch(C)$ is isomorphic to the totalization of a certain cosimplicial construction when $C$ is $1$-connected; {see the analogue for spectra in Section \ref{sec-spec}.}

For every $C$ in $\cat{Coalg}_{\Bbbk}$, there is  a twisted extension of chain complexes
\begin{equation}\label{eqn:cohoch-ext}
\xymatrix@1{\Om C\cof^{\eta \otimes 1} &\cohoch (C)\fib^{1\otimes \ve} &C,}
\end{equation}
which is the dg analogue of the free loop fibration, where, as above, $\eta:\Bbbk \to C$ is the coaugmentation and $\ve: \Om C \to \Bbbk$ the obvious augmentation.

\begin{rmk}  There is a natural and straightforward extension of the coHochschild complex of a chain coalgebra to a \emph{cocyclic complex}, analogous to the extension of the Hochschild complex of a chain algebra to the cyclic complex.  Moreover the construction of the coHochschild complex of a coalgebra $C$ can be generalized to allow for coefficients in any $C$-bicomodule \cite[Section 1.3]{hps}.
\end{rmk} 

\subsection{Properties of the dg coHochschild construction}

The result below provides a first indication of the close link between the Hochschild and coHochschild constructions.

\begin{prop}\label{prop:cohoch-hoch}\cite[Corollary 2.22]{hess.twisted}
Let $\hoch: \cat {Alg}_{\Bbbk}\to \cat {Ch}_{\Bbbk}$ denote the usual Hochschild construction. For any $C$ in $\cat {Coalg}_{\Bbbk}$, there is a natural quasi-isomorphism $$\cohoch(C) \xrightarrow{\simeq} \hoch(\Om C).$$
\end{prop}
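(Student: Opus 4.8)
The plan is to realize both sides as models for the derived bimodule tensor product $\Om C \otimes^{\mathbb{L}}_{(\Om C)^{e}} \Om C$ (where $(\Om C)^{e} = \Om C \otimes (\Om C)^{\mathrm{op}}$) and to compare them through a convenient small resolution, rather than writing down an explicit chain map and checking by hand that it is a quasi-isomorphism. Since $\Om C$ is a free, hence cofibrant, chain algebra, its Hochschild complex computes this derived tensor product: concretely $\hoch(\Om C) = \Om C \otimes_{(\Om C)^{e}} \mathrm{B}$, where $\mathrm{B}$ is the two-sided bar resolution of $\Om C$ as an $\Om C$-bimodule. The key point is that \emph{any} semifree $\Om C$-bimodule resolution of $\Om C$ computes the same object, up to a natural quasi-isomorphism obtained by lifting the identity of $\Om C$ along the acyclic fibration $\mathrm{B} \twoheadrightarrow \Om C$.

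First I would produce such a small resolution from the based path constructions. Set $R := \P_R C \square_C \P_L C$. By the computation recorded in the proof of Proposition~\ref{prop:SDR}, its underlying graded vector space is $T(\si\overline C)\otimes C\otimes T(\si\overline C)$, so $R$ is the free $\Om C$-bimodule generated by the complex $C$. The increasing filtration by the internal degree of the middle (generator) factor $C$ exhibits $R$ as semifree: the perturbation (coaction) terms of its differential send a generator $e$ to generators $e_j, e^{j}$ of strictly smaller degree in $\overline C$, so the associated graded is the free bimodule on $(C,0)$. Moreover $\pi \colon R \to \Om C$ is a map of $\Om C$-bimodules (as observed in the proof of Proposition~\ref{prop:bp-adjunction}), and Proposition~\ref{prop:SDR}(1) exhibits it as a strong deformation retract of the underlying complexes; hence $\pi$ is a quasi-isomorphism, and $R \xrightarrow{\ \pi\ } \Om C$ is a semifree bimodule resolution.

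Next I would identify $\Om C \otimes_{(\Om C)^{e}} R$ with $\cohoch(C)$. Because $R$ is free on $C$, the trace $\Om C \otimes_{(\Om C)^{e}} R$ has underlying graded vector space $C \otimes T(\si\overline C)$, matching that of $\cohoch(C)$, and the trace concatenates the two outer copies of $\Om C$ into a single word. The point requiring care is that the induced differential is exactly $d_{\cohoch}$. Three of its four summands --- the internal differential of $C$, the cobar differential $d_{\Om}$, and the left term $\pm e_j \otimes \si e^{j}\vert\cdots$ coming from the $\P_L$-coaction --- arise directly from the corresponding terms of the differential on $R$. The fourth, cyclic summand $\pm e^{i}\otimes \si c_1\vert\cdots\vert \si c_n\vert \si e_i$ is precisely the image of the $\P_R$-coaction term, which appends a generator to the \emph{left} outer word and thus lands at the far end of the concatenated word after the trace. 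Verifying that this identification reproduces the stated term with the correct Koszul signs is the main computational obstacle; everything else in the argument is formal homological algebra.

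Finally, since $R$ and the bar resolution $\mathrm{B}$ are both semifree resolutions of $\Om C$ as an $\Om C$-bimodule, lifting the identity along $\mathrm{B} \twoheadrightarrow \Om C$ yields a bimodule quasi-isomorphism $R \to \mathrm{B}$ over $\Om C$, which can be chosen naturally in $C$. Applying $\Om C \otimes_{(\Om C)^{e}}(-)$ then gives the desired natural quasi-isomorphism
\[
\cohoch(C) \;\cong\; \Om C \otimes_{(\Om C)^{e}} R \;\xrightarrow{\ \simeq\ }\; \Om C \otimes_{(\Om C)^{e}} \mathrm{B} \;=\; \hoch(\Om C).
\]
Naturality throughout follows from the naturality of the based path constructions $\P_L$, $\P_R$, of the map $\pi$, and of the bar resolution.
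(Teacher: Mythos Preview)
The paper does not actually prove this proposition: it is stated with a citation to \cite[Corollary 2.22]{hess.twisted} and used as a black box. So there is no ``paper's own proof'' to compare against, and your proposal stands on its own merits.

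Your argument is sound in its essentials. Recognizing $R = \P_R C \square_C \P_L C$ as a semifree $\Om C$-bimodule resolution of $\Om C$ (via the degree filtration on the middle $C$ factor, which works because $C$ is connected), and then identifying $\Om C \otimes_{(\Om C)^e} R$ with $\cohoch(C)$, is exactly the right idea; the analysis of how the four summands of $d_{\cohoch}$ arise from the trace is correct. The comparison with the bar resolution then reduces everything to standard homological algebra.

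One point deserves more care: you assert that the lift $R \to \mathrm{B}$ ``can be chosen naturally in $C$,'' but the abstract lifting property of semifree resolutions only gives a map unique up to chain homotopy, and naturality does not follow formally from that. To close this gap you should either write down an explicit natural bimodule map $R \to \mathrm{B}$ over $\Om C$ (for instance, the one induced by sending the generator $1\otimes c\otimes 1$ into the bar complex via the universal twisting cochain $\tau\colon \overline C \to \Om C$, $c\mapsto \si c$), or else weaken the conclusion to a natural isomorphism on homology, which is all that is used downstream in the paper. Either fix is routine, but as written the naturality claim is unjustified.
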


\begin{rmk} Another (and easier) way to obtain an algebra from a coalgebra $C$ is to take its linear dual degreewise, denoted $C^{\vee}$. A straightforward computation shows that for any $C$ in $\cat {Coalg}_{\Bbbk}$, 
the linear dual of the coHochschild complex of $C$ is isomorphic to the Hochschild (cochain) complex of $C^{\vee}$, i.e.,
$$\big(\cohoch(C)\big)^{\vee}\cong \hoch (C^{\vee}).$$
\end{rmk}

We show below that Proposition \ref{prop:cohoch-hoch} can be categorified, i.e., lifted to model categories of $C$-comodules and $\Om C$-modules. We then use this categorification to establish ``agreement'' and Morita invariance for coHochschild homology.

\begin{convention}  Henceforth,  we fix the following model structures.   
\begin{itemize}
\item Endow $\cat{Ch}_{\Bbbk}$  with the model structure for which cofibrations are degreewise injections, fibrations are degreewise surjections, and weak equivalences are quasi-isomorphisms. 
\item For any $C$ in $\cat {Coalg}_{\Bbbk}$, the category $\cat {Mod}_{\Om C}$ of right $\Om C$-modules is equipped with the model structure right-induced from  $ \cat {Ch}_{\Bbbk}$ by the forgetful functor $$U: \cat {Mod}_{\Om C}\to \cat {Ch}_{\Bbbk},$$ which exists by \cite[4.1]{moneq}. The fibrations in $\cat {Mod}_{\Om C}$ are exactly those module maps that are degreewise surjective, whence every object is fibrant.  Every cofibration in $\cat {Mod}_{\Om C}$ is a retract of a sequential colimit of module maps given by pushouts along morphisms of the form (injection)$\otimes \Om C$.
\item For any $C$ in $\cat {Coalg}_{\Bbbk}$,  the category $\cat {Comod}_{C}$ of right $C$-comodules is equipped with the model structure left-induced  from $\cat {Ch}_{\Bbbk}$ by the forgetful functor $$U: \cat {Comod}_{C}\to \cat {Ch}_{\Bbbk},$$ which exists by \cite[6.3.7]{hkrs}; {see also~\cite{gkr}.}   The cofibrations in $ \cat {Comod}_{C}$ are exactly those comodule maps that are degreewise injective, whence every object is cofibrant.  Every retract of a sequential limit of comodule maps given by pullbacks along morphisms of the form (surjection)$\otimes C$ is a fibration in $\cat{Comod}_{C}$.
\end{itemize}
\end{convention}

\subsubsection{Categorifying Proposition \ref{prop:cohoch-hoch}}

\begin{prop}\label{prop:cobar}  For any $C$ in $\cat{Coalg}_{\Bbbk}$, the enriched adjunction
$$\hugeadjunction{\cat{Comod}_{C}}{\cat{Mod}_{\Om C}}{-\square_{C}\P_{L}C}{-\otimes _{\Om C} \P_{R}C}$$
is a Quillen equivalence.
\end{prop}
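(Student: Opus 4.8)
The plan is to show that the adjunction $(-\square_C \P_L C, -\otimes_{\Om C}\P_R C)$ is both a Quillen adjunction and then that the derived unit and counit are weak equivalences. First I would verify the Quillen adjunction condition. Given the model structures fixed in the preceding Convention — cofibrations in $\cat{Comod}_C$ are the degreewise injections and fibrations in $\cat{Mod}_{\Om C}$ are the degreewise surjections — it suffices to check that the left adjoint $-\square_C \P_L C$ preserves cofibrations (and acyclic cofibrations). Since cotensoring with $\P_L C$ over $C$ is, on underlying graded vector spaces, given by tensoring with a fixed complex (one sees from Proposition~\ref{prop:bp-adjunction} that the functors are built from $\otimes$), I expect degreewise injectivity to be preserved. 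Dually I would confirm that the right adjoint $-\otimes_{\Om C}\P_R C$ preserves fibrations by the same underlying-tensor observation. Because every object of $\cat{Comod}_C$ is cofibrant and every object of $\cat{Mod}_{\Om C}$ is fibrant, checking the adjunction is a Quillen adjunction reduces to these preservation statements together with preservation of the respective acyclic maps.

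The heart of the argument is showing the Quillen adjunction is a Quillen \emph{equivalence}, for which I would exhibit that the derived unit and counit are weak equivalences on all objects (using that source and target are already cofibrant/fibrant as noted). Here Proposition~\ref{prop:SDR} does essentially all the work. The counit is built from $\pi\colon \P_R C\square_C \P_L C \to \Om C$, and part (1) of Proposition~\ref{prop:SDR} gives a strong deformation retract $\Om C \simeq \P_R C\square_C\P_L C$ of left $\Om C$-modules, witnessing that the canonical comparison $M\otimes_{\Om C}(\P_R C\square_C\P_L C)\to M\otimes_{\Om C}\Om C\cong M$ is a weak equivalence after suitable cofibrancy. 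Dually, the unit is built from $\iota\colon C\to \P_L C\otimes_{\Om C}\P_R C$, and part (2) supplies a strong deformation retract $C\simeq \P_L C\otimes_{\Om C}\P_R C$ of left $C$-comodules, showing the comparison $N\to N\square_C(\P_L C\otimes_{\Om C}\P_R C)$ is a weak equivalence. The strategy is to bootstrap these deformation retracts of the coefficient bimodules to deformation retracts after tensoring/cotensoring an arbitrary (cofibrant/fibrant) $M$ or $N$, exploiting that the homotopies in Proposition~\ref{prop:SDR} are one-sided $\Om C$-module, resp.\ $C$-comodule, homotopies and so survive the relevant $\otimes_{\Om C}$ or $\square_C$.

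The main obstacle I anticipate is the interaction between the derived and underived functors, i.e., ensuring that the point-set strong deformation retracts of Proposition~\ref{prop:SDR} genuinely compute the derived unit and counit. Two issues must be controlled. The first is cofibrancy/fibrancy in the relevant variable: to evaluate the left derived functor of $-\square_C\P_L C$ (equivalently, to know that $-\otimes_{\Om C}\P_R C$ computes its right derived functor without further replacement) I would check that $\P_L C$ and $\P_R C$ are suitably (co)flat or (co)fibrant as bimodules, so that $M\otimes_{\Om C}(-)$ and $N\square_C(-)$ are homotopically well-behaved; the freeness of $T(\si\overline C)$ underlying these constructions, together with the fact that $\P_R C$ and $\P_L C$ are contractible as one-sided modules (the homotopy $h_R$, resp.\ $h_L$, recalled before Proposition~\ref{prop:SDR}), should guarantee this. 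The second is verifying that the derived counit and unit agree, up to the comparison isomorphisms of Proposition~\ref{prop:bp-adjunction}, with the maps induced by $\pi$ and $\iota$; this is a compatibility of the adjunction data with the deformation retracts, which I would settle by a direct (if slightly delicate) diagram chase rather than any deep input. Once both the preservation of the model structures and the bimodule (co)flatness are in place, the deformation retracts of Proposition~\ref{prop:SDR} immediately upgrade the Quillen adjunction to a Quillen equivalence.
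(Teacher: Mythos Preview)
Your plan for the Quillen-equivalence half is correct and matches the paper: once the adjunction is Quillen, the SDRs of Proposition~\ref{prop:SDR} pass through $M\otimes_{\Om C}(-)$ and $N\square_C(-)$ (the homotopies are one-sided $\Om C$-linear, resp.\ $C$-colinear), and since every comodule is cofibrant and every $\Om C$-module is fibrant, the underived unit and counit already compute the derived ones. No extra (co)flatness verification is needed.

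The gap is in the Quillen-adjunction half. Your argument that $L_C$ preserves cofibrations because it ``preserves degreewise injectivity'' does not suffice: cofibrations in $\cat{Mod}_{\Om C}$ (right-induced from $\cat{Ch}_\Bbbk$) are \emph{not} the degreewise injections but rather retracts of relative cell $\Om C$-modules, so knowing $L_C(j)$ is injective establishes nothing. Dually, fibrations in $\cat{Comod}_C$ (left-induced) are not characterized as surjections, so your claim about $R_C$ is equally unsubstantiated. The paper handles $L_C$ of a cofibration $j\colon N\hookrightarrow N'$ by explicitly exhibiting $L_C(j)$ as a transfinite composite of pushouts along maps of the form $(\text{injection})\otimes\Om C$: one writes $N'$ as the filtered union of subcomodules $N'(n)$ with $N'(n)/N'(n-1)$ controlled (using that every comodule is the union of its finite-dimensional subcomodules) and attaches free $\Om C$-cells stage by stage.

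More seriously, your proposal gives no argument for preservation of \emph{acyclic} cofibrations; ``preservation of the respective acyclic maps'' is asserted, not argued. The paper shows $L_C$ preserves all weak equivalences via a spectral sequence: the primitive filtration $F_mN=\ker\big(\bar\rho^{(m)}\colon N\to N\otimes \overline C^{\otimes m}\big)$ of any $C$-comodule $N$ (exhaustive because $C$ is connected) induces an exhaustive filtration of $L_C(N)\cong(N\otimes\Om C,D_L)$ whose associated spectral sequence has $E_2\cong H_*(N)\otimes H_*(\Om C)$, so a quasi-isomorphism of comodules induces an isomorphism on $E_2$ and hence on $H_*L_C(-)$. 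This filtration/spectral-sequence step is the substantive ingredient your outline is missing.
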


\begin{proof}  To simplify notation, we write 
$$L_{C}=-\square_{C}\P_{L}C\quad\text{and}\quad R_{C}=-\otimes _{\Om C}\P_{R}C.$$
Observe that $R_{C}(M)\cong (M\otimes C, D_{R})$  and $L_{C}(N)\cong (N\otimes \Om C, D_{L})$, for every  $M$ in $\cat {Mod}_{\Om C}$ and every $N$ in $\cat{Comod}_{C}$, where the differentials of these complexes are specified by
$$D_{R}(x\otimes c)= dx \otimes c \;\pm  x \otimes dc \;\pm (x \cdot \si c_{i})\otimes c^{i}$$
and
$$D_{L}(y\otimes w)=  dy \otimes w \;\pm y \otimes d_{\Om}w \;\pm y_{j}\otimes (\si c^{{j}}\cdot w),$$
Here, $\rho (y)=y\otimes + y_{j}\otimes c^{j}$ and $\Delta (c)=c_{i}\otimes c^{i}$,  where $\rho$ is the $C$-coaction on $N$ and $\Delta$ the comultiplication on $C$, and the signs are determined by the Koszul rule.

We show first that $L_{C}\dashv R_{C}$ is a Quillen adjunction.   If $j:N\to N'$ is a cofibration in $\cat {Comod}_{C}$, i.e., a degreewise injective morphism of $C$-comodules, then  there is a decomposition $N'=N\oplus V$ as graded vector spaces, since we are working over a field. It follows $L_{C}(N')$ can be built inductively as an $\Om C$-module  from $L_{C}(N)$, as we explain below.
  
Suppose that $dv\in N$ and $\rho (v)-v\otimes 1\in N\otimes C$ for all $v\in V$, and let $B$ be a basis of $V$. There is a pushout diagram in $\cat {Mod}_{\Om C}$
$$\xymatrix{\coprod _{x\in B}S^{|x|-1}\otimes \Om C\ar [d]\ar[r]^(0.6){\iota}&L_{C}(N)\ar[d]^{L_{C}(j)}\\
\coprod _{x\in B}D^{|x|}\otimes \Om C\ar[r]&L_{C}(N\oplus V)},$$
where $ S^{m}$ is the chain complex with only one basis element, which is in degree $m$, $D^{m+1}$ has two basis elements, in degrees $m$ and $m+1$, with a differential linking the latter to the former, and $\iota$ maps the generator of $S^{|x|-1}$ to $dx$ for every $x\in B$. It follows that $L_{C}(j)$ is a cofibration, in this special case.  

In the general case, we use that any comodule is the filtered colimit of its finite-dimensional subcomodules \cite[Lemma 1.1]{getzler-goerss}. We can structure this filtered colimit more precisely as follows.  For any $n\geq 1$,  let $\{N'(i,n)\mid i\in \mathcal I_{n}\}$ denote the set of subcomodules of $N'$ such that $ N'(i,n)/N$ is of dimension $n$ for all $i\in \mathcal I_{n}$, and set $N'(n)=\Sigma_{i\in \mathcal I_{n}}N'(i,n)$.  The argument above shows that the injection $N\to N'(1)$ is a pushout along a morphism of the form (injection)$\otimes \Om C$, and, more generally, that the inclusion $N'(n) \to N'(n+1)$ is a pushout along a morphism of the form (injection)$\otimes \Om C$ for all $n$, whence $j\colon N\to N'=\colim_{n}N'(n)$ is a cofibration.

On the other hand, we can show by a spectral sequence argument that the functor $L_{C}$ preserves all weak equivalences and therefore preserves trivial cofibrations.  Any $N$ in $\cat{Comod}_{C}$ 
admits a natural ``primitive'' filtration 
\begin{equation}\label{eqn:filtration}
F_{0}N\subseteq F_{1}N\subseteq F_{2}N\subseteq\cdots \subseteq N
\end{equation}
as a $C$-comodule, i.e., $F_{0}N=\ker ( N\xrightarrow{\bar\rho} N\otimes C)$ and 
$$F_{m}N=\ker (N\xrightarrow {\bar\rho^{(n)}} N\otimes C^{\otimes n} )$$
for all $m\geq 1$, where $\bar \rho=\rho- N\otimes \eta$, and $\bar \rho^{(n)}=(\bar \rho\otimes C^{\otimes n-1})\bar \rho^{(n-1)}$.  Note that this filtration is always exhaustive, since $C$ is connected, and $(\rho \otimes C)\rho= (N\otimes \Delta)\rho$.

Consider the exhaustive filtration of $L_{C}(N)$ as an $\Om C$-module induced by applying $L_{C}$ to the primitive filtration (\ref{eqn:filtration}) of $N$:
{\small $$(F_{0}N\otimes \Om C, d\otimes 1 + 1\otimes d_{\Om}) \subseteq (F_{1}N\otimes \Om C, D_{L})\subseteq (F_{2}(N)\otimes \Om C, D_{L})\subseteq\cdots \subseteq (N\otimes \Om C, D_{L}).$$}
The  $E_{2}$-term of the spectral sequence associated to this filtration, which converges to $H_{*}L_{C}(N)$, is isomorphic as a graded vector space to $H_{*}(N)\otimes H_{*}(\Om C)$, from which it follows that a quasi-isomorphism of $C$-comodules induces an isomorphism on the $E_{2}$-terms of the associated spectral sequences and thus on the $E_{\infty}$-terms as well.

Since $R_{C}(M)$ is cofibrant in $\cat{Comod}_{C}$ and $L_{C}(N)$ is fibrant in $\cat{Mod}_{\Om C}$ for every  $M$ in $\cat {Mod}_{\Om C}$ and every $N$ in $\cat{Comod}_{C}$, it follows from \cite[Proposition 1.3.13(b)]{hovey} that $L_{C}\dashv R_{C}$ is a Quillen equivalence if the unit $N\to R_{C}L_{C}N$ and counit $L_{C}R_{C}M \to M$ of the $L_{C}\dashv R_{C}$ adjunction are weak equivalences for every $N$ (since they are all cofibrant) and every $M$ (since they are all fibrant). To conclude, it suffices therefore to observe that for every $M$, there is a sequence of isomorphisms and weak equivalences in $\cat {Mod}_{\Om C}$,
$$L_{C}R_{C}M= M\otimes _{\Om C} \P_{R}C\square_{C}\P_{L}C \simeq M\otimes_{\Om C} \Om C\cong M,$$
where the weak equivalence is a consequence of Proposition \ref{prop:SDR}(1), and that for every $N$, there is a sequence of isomorphisms and weak equivalence in $\cat{Comod}_{C}$,
$$R_{C}L_{C}N=N\square_{C}\P_{L}C\otimes _{\Om C} \P_{R}C\simeq N\square_{C}C\cong N,$$
where the weak equivalence follows from Proposition \ref{prop:SDR}(2).
\end{proof}

\begin{rmk} In Chapter 2 of his thesis \cite{lefevre-hasegawa}, Lef\`evre-Hasegawa defined a model structure on the category of \emph{cocomplete} comodules over a \emph{cocomplete}, coaugmented dg coalgebra $C$, of which the proposition above would seem to be a special case.  Here, ``cocomplete'' means that the respective primitive filtration is exhaustive, which is not immediate if $C$ is not connected.  It seems, however, that Lef\`evre-Hasegawa did not check that the category of cocomplete comodules is closed under limits, which we suspect is actually not true.

The proposition above also could be viewed as a special case \cite [Proposition 3.15]{drummond-cole-hirsh}, which establishes a general Quillen equivalence between certain categories of coalgebras over a cooperad and algebras over an operad as mediated by a twisting morphism.  One needs to show that the weak equivalences of \cite {drummond-cole-hirsh} are the same as those in our model structure on $\cat{Comod}_{C}$, which follows from Proposition \ref{prop:SDR}.  We think there is merit in providing an explicit, independent proof in this special case, especially as it makes evident  the ``geometric'' nature of the proof (using based path spaces).
\end{rmk}

\begin{ex}   Proposition \ref{prop:cobar} implies that for every reduced simplicial set $K$, 
$$\giantadjunction{\cat{Comod}_{C_{*}(K)}}{\cat{Mod}_{\Om C_{*}(K)}}{-\square_{C_{*}(K)}\P_{L}C_{*}(K)}{-\otimes _{\Om C_{*}(K)}\P_{R}C_{*}(K)}$$
is a Quillen equivalence, where $C_{*}(K)$ denotes the normalized chain coalgebra of $K$ with coefficients in $\Bbbk$.  Moreover, if $K$ is actually $1$-reduced, there is a natural quasi-isomorphism of chain algebras $$\alpha_{K}\colon \Om C_{*}(K) \xrightarrow \simeq C_{*}(\G K)$$ \cite{szczarba}, where $\G$ denotes the Kan loop group functor, which induces a Quillen equivalence 
$$\bigadjunction{\cat{Mod}_{\Om C_{*}(K)}}{\cat{Mod}_{C_{*}(\G K)}}{\alpha_{!}} {\alpha^{*}}.$$
It follows that if $K$ is $1$-reduced, there is a Quillen equivalence 
$$\bigadjunction{\cat{Comod}_{C_{*}(K)}}{\cat{Mod}_{C_{*}(\G K)}}{} {}.$$
\end{ex}

\subsubsection{Agreement}

The Quillen equivalence of {Proposition~\ref{prop:cobar}} enables us to establish ``agreement" for coHochschild homology, analogous to ``agreement" for Hoch\-schild homology, which we recall now. In \cite{mccarthy}, McCarthy extended the notion of Hochschild homology in a natural way to exact categories, seen as ``rings with many objects,''  and established  ``agreement'' in this context:  the Hochschild homology of the exact category of finitely generated projective modules over a ring $R$ is isomorphic to the Hochschild homology of $R$ itself.    

Keller generalized the definition of Hochschild homology to dg categories (seen as dg algebras with many objects) in \cite{keller:cyclic}, \cite{keller:icm} and showed that agreement still held in this more general setting: for any dg $\Bbbk$-algebra $A$, the Hochschild homology of $A$ is isomorphic to that of the full dg subcategory $\cat{dgfree}_{A}$ of $\cat {Mod}_{A}$  \cite[Theorem 2.4]{keller:cyclic}.  The objects of $\cat{dgfree}_{A}$  are finitely generated quasi-free $A$-modules, i.e., $A$ -modules such that the underlying nondifferential graded module is free and finitely generated over the nondifferential graded algebra underlying $A$.  Note that objects in $\cat{dgfree}_{A}$ are cofibrant in our chosen model structure on $\cat {Mod}_{A}$.

For any $C$ in $\cat {Coalg}_{\Bbbk}$, the notion of agreement for coHochschild homology is expressed in terms of the full dg subcategory $\cat{dgcofree}_{C}$ of $\cat {Comod}_{C}$, the objects of which are the fibrant $C$-comodules $N$ such that there exists a quasi-isomorphism of $\Om C$-modules $L_{C}(N) \xrightarrow \simeq M$, where $M$ is an object of $\cat {dgfree}_{\Om C}$.  Observe that  a morphism $L_{C}(N) \to M$ is a quasi-isomorphism if and only if its transpose $N\to R_{C}(M)$ is a quasi-isomorphism, since $L_{C}\dashv R_{C}$ is a Quillen equivalence, all objects in $\cat{Comod}_{C}$ or in the image of $R_{C}$ are cofibrant, and all objects in $\cat{Mod}_{\Om C}$ or in the image of $L_{C}$ are fibrant.

\begin{rmk}  The notation for the dg subcategory $\cat{dgcofree}_{C}$ is a bit abusive, since not all of its objects are actually quasi-cofree, i.e., such that the underlying nondifferential graded comodule is cofree over the nondifferential graded coalgebra underlying $C$.  On the other hand, any quasi-cofree $C$-comodule that is ``finitely cogenerated'' over $C$ is an object of $\cat{dgcofree}_{C}$.  More precisely, any quasi-cofree $C$-comodule is the limit of a tower  of comodule maps given by pullbacks along morphisms of the form (surjection)$\otimes C$ and is therefore fibrant. Moreover, if $N$ is ``finitely cogenerated'' by $V$, then its image under $L_{C}$ is finitely generated by $V$.
\end{rmk}

\begin{prop} \label{prop:cohoch-props}  Agreement holds for coHochschild homology of coalgebras, i.e., for every $C$ in $\cat {Coalg}_{\Bbbk}$, $$H_{*}\big(\cohoch (C)\big) \cong H_{*}\big(\hoch(\cat{dgcofree}_{C})\big).$$
\end{prop}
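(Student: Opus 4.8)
The plan is to chain together the tools already assembled in this section to reduce the coHochschild statement to Keller's agreement theorem for Hochschild homology of dg categories. The key algebraic input is Proposition \ref{prop:cohoch-hoch}, which gives a quasi-isomorphism $\cohoch(C) \xrightarrow{\simeq} \hoch(\Om C)$, so that $H_*\big(\cohoch(C)\big) \cong H_*\big(\hoch(\Om C)\big)$. On the other side, Keller's agreement \cite[Theorem 2.4]{keller:cyclic} tells us that $H_*\big(\hoch(\Om C)\big) \cong H_*\big(\hoch(\cat{dgfree}_{\Om C})\big)$, the Hochschild homology of the dg algebra $\Om C$ agreeing with that of the dg category of finitely generated quasi-free $\Om C$-modules. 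Thus the entire content of the proposition reduces to producing an isomorphism
$$H_*\big(\hoch(\cat{dgcofree}_{C})\big) \cong H_*\big(\hoch(\cat{dgfree}_{\Om C})\big),$$
i.e., identifying the Hochschild homologies of these two dg categories.

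The natural way to establish that last isomorphism is to exhibit a quasi-equivalence of dg categories between $\cat{dgcofree}_{C}$ and $\cat{dgfree}_{\Om C}$ and to invoke the fact that Hochschild homology of dg categories is invariant under quasi-equivalence (this is precisely the Morita-type invariance of Keller's construction). The functor should be $L_{C} = -\square_{C}\P_{L}C$ from Proposition \ref{prop:cobar}, which is a left Quillen functor and, being dg-enriched by Proposition \ref{prop:bp-adjunction}, restricts to a dg functor on the relevant subcategories. First I would check that $L_{C}$ carries $\cat{dgcofree}_{C}$ into (an essentially surjective image of) $\cat{dgfree}_{\Om C}$: this is essentially built into the \emph{definition} of $\cat{dgcofree}_{C}$, whose objects are exactly the fibrant $N$ with $L_{C}(N)$ quasi-isomorphic to an object of $\cat{dgfree}_{\Om C}$. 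Quasi-essential surjectivity then follows because every object of $\cat{dgfree}_{\Om C}$ is fibrant and, via the counit weak equivalence $L_{C}R_{C}M \xrightarrow{\simeq} M$ from the Quillen equivalence, is hit up to quasi-isomorphism.

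The main obstacle, and the step requiring real care, is quasi-full faithfulness: I must show that for fibrant $C$-comodules $N, N'$ in $\cat{dgcofree}_{C}$, the map on hom-complexes
$$\underline{\cat{Comod}}\,_{C}(N,N') \longrightarrow \underline{\cat{Mod}}\,_{\Om C}\big(L_{C}N, L_{C}N'\big)$$
is a quasi-isomorphism. This is exactly the kind of statement flagged in the introduction as requiring a general criterion under which a dg Quillen equivalence induces a quasi-equivalence of dg subcategories — Lemma \ref{lem:q-e}. So the cleanest route is to state the hypotheses of that lemma and verify them here: $L_{C}\dashv R_{C}$ is a dg Quillen equivalence (Propositions \ref{prop:bp-adjunction} and \ref{prop:cobar}), the objects of $\cat{dgcofree}_{C}$ are cofibrant (every $C$-comodule is) and fibrant by hypothesis, and their images under $L_{C}$ land in cofibrant-fibrant objects of $\cat{Mod}_{\Om C}$ quasi-isomorphic to objects of $\cat{dgfree}_{\Om C}$. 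Applying Lemma \ref{lem:q-e} then yields the desired quasi-equivalence $\cat{dgcofree}_{C} \xrightarrow{\simeq} \cat{dgfree}_{\Om C}$, and combining the three displayed isomorphisms — Proposition \ref{prop:cohoch-hoch}, Keller's agreement, and quasi-equivalence invariance of $\hoch$ — completes the proof. I expect the derived-hom comparison underlying Lemma \ref{lem:q-e} to be where all the homotopical subtlety is concentrated, since one must know the enriched hom-complexes already compute the correct derived mapping objects, which is guaranteed precisely by the cofibrant-fibrant hypotheses we have arranged.
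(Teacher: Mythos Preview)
Your overall architecture is exactly the paper's: chain Proposition~\ref{prop:cohoch-hoch}, Keller's agreement for $\Om C$, and a quasi-equivalence between $\cat{dgcofree}_{C}$ and $\cat{dgfree}_{\Om C}$ obtained from Lemma~\ref{lem:q-e} applied to the Quillen equivalence of Proposition~\ref{prop:cobar}, then conclude by Morita invariance of Hochschild homology of dg categories.

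There is, however, a genuine gap in the direction you chose for the quasi-equivalence. You propose to use the \emph{left} adjoint $L_{C}$ as the functor $\cat{dgcofree}_{C}\to \cat{dgfree}_{\Om C}$, but $L_{C}$ does not corestrict to $\cat{dgfree}_{\Om C}$: by definition of $\cat{dgcofree}_{C}$, the object $L_{C}(N)$ is only \emph{quasi-isomorphic} to a finitely generated quasi-free $\Om C$-module, not equal to one. You acknowledge this yourself (``land in \ldots\ objects \ldots\ quasi-isomorphic to objects of $\cat{dgfree}_{\Om C}$''), but then Lemma~\ref{lem:q-e}(1) does not apply, since its hypothesis is precisely that $F$ restricts and corestricts to a functor $\cat M'\to \cat N'$. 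The paper resolves this by going in the other direction: it applies Lemma~\ref{lem:q-e}(2) with the \emph{right} adjoint $R_{C}$, which genuinely restricts and corestricts to a functor $\cat{dgfree}_{\Om C}\to \cat{dgcofree}_{C}$ (because $R_{C}(M)$ is fibrant and the counit $L_{C}R_{C}(M)\to M$ is a quasi-isomorphism). The remaining hypothesis of part~(2)---that for every $N\in\cat{dgcofree}_{C}$ there exists $M\in\cat{dgfree}_{\Om C}$ with $L_{C}(N)\xrightarrow{\simeq}M$---is then literally the definition of $\cat{dgcofree}_{C}$. Swapping $L_{C}$ for $R_{C}$ in your argument fixes the gap with no further change to the strategy.
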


The key to the proof of agreement for coalgebras, as well as to establishing Morita invariance at the end of this section, is the following lemma, providing conditions under which dg Quillen equivalences induce quasi-equivalences of dg categories.  We were unable to find this result in the literature, though we suspect it is well known.

\begin{lem}\label{lem:q-e} Let $\adjunction{\cat M}{\cat N}{F}{G}$ be an enriched Quillen equivalence of dg model categories.  Let $\cat M'$ and $\cat N'$ be full dg subcategories of $\cat M$ and $\cat N$, respectively, where all objects of $\cat M'$ and of $\cat N'$ are fibrant and cofibrant.   
\begin{enumerate}
\item Suppose that $F$ restricts and corestricts to a functor $F':\cat M' \to \cat N'$. If for every $Y\in \ob \cat N'$ there exists a weak equivalence $p_Y:\widehat{G(Y)} \xrightarrow \simeq {G(Y)}$ where $\widehat {G(Y)}\in \ob \cat M'$, then $F'$ is a quasi-equivalence.
\item Suppose that $G$ restricts and corestricts to a functor $G':\cat N' \to \cat M'$. If for every $X\in \ob \cat M'$ there exists a weak equivalence $j_X:{F(X)} \xrightarrow \simeq \widehat{F(X)}$ where $\widehat{F(X)}\in \ob \cat N'$,  then $G'$ is a quasi-equivalence.
\end{enumerate}
\end{lem}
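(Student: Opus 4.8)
The plan is to unwind the definition of quasi-equivalence into its two constituent conditions---quasi-fully-faithfulness and quasi-essential surjectivity---and to verify each using the enriched adjunction together with the standard homotopical behaviour of hom-complexes in a dg model category. Write $\hom_{\cat M}(X,X')$ for the dg hom-complex, and recall the three inputs I will use: (i) the enriched adjunction provides natural isomorphisms of chain complexes $\Phi\colon \hom_{\cat N}(FX,Y)\xrightarrow{\cong}\hom_{\cat M}(X,GY)$; (ii) for $X$ cofibrant the functor $\hom_{\cat M}(X,-)$ carries weak equivalences between fibrant objects to quasi-isomorphisms, and dually for $Y'$ fibrant the functor $\hom_{\cat N}(-,Y')$ carries weak equivalences between cofibrant objects to quasi-isomorphisms (the $\mathrm{SM7}$/pushout-product axiom, via Ken Brown's lemma); and (iii) on bifibrant objects the hom-sets of $\Ho(\cat N)$ agree with $H_0$ of the hom-complexes, so any weak equivalence between bifibrant objects represents an isomorphism in $H_0\cat N'$. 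I will prove (1) in full; statement (2) is formally dual, interchanging the roles of $(F,\cat M)$ and $(G,\cat N)$ and replacing the hypothesis on $\widehat{G(Y)}$ by the one on $\widehat{F(X)}$.

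To show that $F'$ is quasi-fully faithful, fix $X,X'\in\ob\cat M'$. Since $F$ corestricts to $\cat N'$, the object $FX'$ is fibrant; applying the Quillen-equivalence criterion \cite[Proposition 1.3.13(b)]{hovey} to the weak equivalence $\id_{FX'}$ (whose adjunct is the unit) shows that $\eta_{X'}\colon X'\to GFX'$ is a weak equivalence. Both $X'$ and $GFX'$ are fibrant (the latter because $G$ is right Quillen), so by (ii), and because $X$ is cofibrant, the map $\hom_{\cat M}(X,\eta_{X'})$ is a quasi-isomorphism. By naturality of the unit, $\Phi\circ F_{X,X'}=\hom_{\cat M}(X,\eta_{X'})$; since $\Phi$ is an isomorphism of complexes, $F_{X,X'}$ is a quasi-isomorphism. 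Note that this half uses only that $F$ corestricts to $\cat N'$.

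To show that $H_0F'$ is essentially surjective, fix $Y\in\ob\cat N'$ and use the hypothesis to choose $\widehat{G(Y)}\in\ob\cat M'$ together with a weak equivalence $p_Y\colon\widehat{G(Y)}\xrightarrow{\simeq}G(Y)$. Put $X:=\widehat{G(Y)}$, which is cofibrant, and let $f\colon FX\to Y$ be the map adjunct to $p_Y$ (so that the adjunct of $f$ is $p_Y$). Since $X$ is cofibrant, $Y$ is fibrant, and the adjunct $p_Y$ of $f$ is a weak equivalence, the criterion \cite[Proposition 1.3.13(b)]{hovey} forces $f$ itself to be a weak equivalence. As $F$ corestricts to $\cat N'$, the object $FX=F'X$ lies in $\cat N'$ and is bifibrant, as is $Y$; hence by (iii) the weak equivalence $f$ represents an isomorphism $F'X\cong Y$ in $H_0\cat N'$. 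Therefore $F'$ is a quasi-equivalence.

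I expect the essential-surjectivity step to be the crux: it is precisely here that the existence of a cofibrant object $\widehat{G(Y)}\in\ob\cat M'$ weakly equivalent to $G(Y)$ is indispensable, since without it the Quillen-equivalence criterion cannot be invoked to produce a weak equivalence with target $Y$. The two points demanding care---both standard, but worth stating cleanly---are the identification in (iii) of isomorphisms in $H_0\cat N'$ with weak equivalences between bifibrant objects, and the verification, via naturality of the unit, that $F_{X,X'}$ corresponds under $\Phi$ to post-composition with $\eta_{X'}$.
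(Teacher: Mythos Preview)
Your proof is correct and follows essentially the same approach as the paper's. Both arguments establish quasi-fully-faithfulness by factoring $F_{X,X'}$ through the adjunction isomorphism and post-composition with the unit $\eta_{X'}$, and both establish quasi-essential surjectivity by producing a weak equivalence $F'(\widehat{G(Y)})\to Y$ from the hypothesis (your adjunct $f$ of $p_Y$ is exactly the paper's composite $\ve_{Y}\circ F(p_Y)$). The only cosmetic difference is in the final step: you invoke the identification of $H_0\cat N'$ with $\Ho(\cat N)$ on bifibrant objects directly, whereas the paper argues that $f$ induces quasi-isomorphisms $\hom_{\cat N'}(W',f)$ for all $W'\in\ob\cat N'$ and then cites To\"en's exercise characterizing isomorphisms in $H_0\cat N'$; these are two phrasings of the same standard fact.
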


\begin{proof} We prove (1) and leave the dual proof of (2) to the reader.  By \cite[Proposition 1.3.13]{hovey}, since $F\dashv G$ is a Quillen equivalence, the unit morphism $\eta_{X'}:X' \to G'F'(X')$ is a weak equivalence for every $X'\in \ob \cat M'$, since $X'$ is cofibrant and $F'(X')$ is fibrant.   It follows that $$F'_{X',Y'}: \hom _{\cat M'}(X',Y') \to \hom _{\cat N'}\big( F'(X'), F'(Y')\big)$$ is a quasi-isomorphism for all $X',Y'\in \ob \cat M'$, since it factors as
$$\hom_{\cat M'}(X',Y') \xrightarrow {\simeq} \hom_{\cat M'}\big(X', G'F'(Y')\big)\cong \hom _{\cat N'}\big(F'(X'), F'(Y')\big).$$
The first map above is a quasi-isomorphism because $X'$ is cofibrant, and $\eta_{X'}$ is a weak equivalence between fibrant objects.  We conclude that $F'$ is quasi-fully faithful.

Let $Z'\in \ob \cat N'$.   Since we can choose $p_{Z'}$ as  a cofibrant replacement of $G'(Z')$, the composite 
$$F'\big( \widehat{G(Z')}) \xrightarrow {F'(p_{Z'})} F'G'(Z') \xrightarrow {\ve_{Z'}} Z'$$
is a model for the derived counit of the adjunction and therefore a weak equivalence.  Both its source and target are objects in $\cat N'$ and therefore fibrant (and cofibrant), whence $\hom _{\cat N'} \big(W', \ve_{Z'}F'(p_{Z'})\big)$ is a quasi-isomorphism for all $W'\in \ob\cat N'$, as all objects in $\cat N'$ are cofibrant.  By Exercise 6 in \cite[Section 2.3]{toen}, it follows that the homology class of $\ve_{Z'}F'(p_{Z'})$ is an isomorphism in the homology category of $\cat N'$ and thus that $F'$ is quasi-essentially surjective.
\end{proof}

\begin{proof}[Proof of Proposition \ref{prop:cohoch-props}]  Observe that
$$\cohoch (C) \simeq \hoch (\Om C) \simeq \hoch ( \cat {dgfree}_{\Om C})\simeq \hoch ( \cat {dgcofree}_{C}),$$
where the first equivalence is given by Proposition \ref{prop:cohoch-hoch} and the second by \cite{keller:cyclic}. The third follows from Proposition  \ref{prop:cobar} and Lemma \ref{lem:q-e}(2).  To see that all of the conditions of Lemma \ref{lem:q-e}(2) are satisfied, note first that all objects of $\cat {dgfree}_{\Om C}$ and $\cat {dgcofree}_{C}$ are both fibrant and cofibrant.  Moreover, the functor $R_{C}$ restricts and corestricts to a dg functor from $\cat{dgfree}_{\Om C}$ to  $\cat{dgcofree}_{C}$, since the counit $L_{C}R_{C}\to \id$ of the Quillen equivalence $L_{C}\dashv R_{C}$ is a quasi-isomorphism on objects that are cofibrant and fibrant.  Finally, $\cat{dgcofree}_{C}$ is defined precisely so that the remaining condition holds as well.

 Because every quasi-equivalence of dg categories is a Morita equivalence \cite[Section 4.4]{toen}, and Hochschild homology of dg categories is an invariant of Morita equivalence \cite[Section 5.2]{toen}, we can conclude.
\end{proof}

\subsubsection{Morita invariance}
Thanks to the ``agreement'' result established above, we can now show that coHochschild homology satisfies a property dual to the Morita invariance of Hochschild homology. The study of equivalences between categories of comodules over coalgebras over a field, commonly referred to as \emph{Morita-Takeuchi theory}, was initiated by Takeuchi \cite{takeuchi} and further elaborated and generalized by Farinati and Solotar \cite{farinati-solotar} and Brzezinski and Wisbauer \cite{brzezinski-wisbauer}, among others.  In \cite{berglund-hess}, Berglund and Hess formulated a homotopical version of this theory, in terms of the following notion.

\begin{defn} \label{def:braided bimodule}  Let $C,D$ be objects in $\cat {Coalg}_{\Bbbk}$.
A \emph{braiding from $C$ to $D$} is a pair $(X,T)$ where $X$ is in $\cat {Ch}_{\Bbbk}$, and $T$ is a morphism of chain complexes
$$T\colon C\otimes X\rightarrow X\tensor D$$
satisfying the following axioms.

\noindent {\bf (Pentagon axiom)}

The diagram
\begin{equation} \label{eq:pentagon}
\xymatrix{C\tensor X \ar[d]_-{\Delta_C\tensor 1} \ar[rr]^-{T} && X\tensor D \ar[d]^-{1\tensor \Delta_D} \\
C\tensor C\tensor X \ar[dr]_-{1\tensor T} && X\tensor D \tensor D \\
& C\tensor X\tensor D \ar[ur]_-{T\tensor 1}}
\end{equation}
commutes.

\vskip5pt
\noindent {\bf (Counit axiom)}

The diagram
\begin{equation} \label{eq:counit}
\xymatrix{C\tensor X \ar[rr]^-{T} \ar[d]^-{\epsilon_C\tensor 1} && X\tensor D \ar[d]^-{1\tensor \epsilon_D} \\
\Bbbk \tensor X \ar[r]^-\cong & X & X\tensor \Bbbk \ar[l]_-\cong}
\end{equation}
commutes.

We write $(X,T)\colon C \to D$ to indicate that $(X,T)$ is a  braiding from $C$ to $D$.
\end{defn}

\begin{ex}[Change of coalgebras] \label{ex:braided bimodules 3}
A morphism  $f\colon C\to D$ in $\cat {Coalg}_{\Bbbk}$ gives rise to a braiding $(\Bbbk, f):C\to D$ and thus to an adjunction
\begin{equation}\label{eq:corestriction}
\bigadjunction{\cat{Comod}_{C}}{\cat {Comod}_{D}}{f_{*}}{f^{*}},
\end{equation}
which we call the \emph{coextension/corestriction-of-scalars adjunction} or \emph{change-of-corings adjunction} associated to $f$. The $D$-component of the counit of the $f_{*}\dashv f^{*}$ adjunction is $f$ itself and that for every $C$-comodule $(M, \delta)$,
$$f_{*}(M,\delta)= \big(M, (1\otimes f)\delta\big).$$
\end{ex}

Since $\cat{Comod}_{C}$ is bicomplete for all coalgebras $C$ (as $\cat {Ch}_{\Bbbk}$ is locally presentable, and $\cat{Comod}_{C}$ is a category of coalgebras for the comonad $-\otimes C$), it follows from   \cite[Proposition 3.17]{berglund-hess} that every braiding $(X,T)\colon C\to D$ gives rise to a $\cat {Ch}_{\Bbbk}$-adjunction
$$\bigadjunction{\cat {Comod}_{C}}{\cat {Comod}_{D}}{T_{*}}{T^{*}},\quad T_{*} \dashv T^{*},$$
such that the diagram 
$$\xymatrix{\cat {Comod}_{C} \ar[rr]^{T_{*}} \ar[d]_{U} && \cat {Comod}_{D} \ar[d]^{U}\\
\cat {Ch}_{\Bbbk} \ar[rr]^{-\tensor X} && \cat{Ch}_{\Bbbk}}
$$
commutes, i.e., the endofunctor on $\cat {Ch}_{\Bbbk}$ is just given by tensoring with $X$.  Moreover, since we are working over  a field and thus tensoring with any chain complex preserves both weak equivalences and equalizers, Proposition 3.31 in \cite{berglund-hess} implies that if $X$ is dualizable with dual $X^{\vee}$, then $T^{*}=-\square_{D}(X^{\vee}\tensor C)$, where $-\square_{D}-$ denotes the cotensor product over $D$.

The following lemma, relating braidings to the adjunction $L_{C}\dashv R_{C}$ studied above, plays an important role at the end of this section.

\begin{lem}\label{lem:braiding-compatible} For every braiding $(X,T)\colon C\to D$, the dg adjunction
$$\giantadjunction {\cat{Mod}_{\Om C}}{\cat {Mod}_{\Om D}}{-\otimes_{\Om C}(X \otimes \Om D)} {\hom_{\Om D}(X\otimes \Om D, -)}$$
is a dg Quillen pair, satisfying the natural isomorphisms 
$$\big (-\otimes_{\Om C}(X \otimes \Om D)\big)\circ L_{C}\cong L_{D}\circ T_{*}$$
and
$$R_{C}\circ \hom_{\Om D}(X\otimes \Om D, -)\cong T^{*}\circ R_{D}.$$
\end{lem}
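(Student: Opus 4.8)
The plan is to first make explicit the $(\Om C,\Om D)$-bimodule structure on $X\otimes\Om D$ that is implicit in the statement, then to establish the Quillen pair and the two natural isomorphisms, deducing the second isomorphism from the first by passage to right adjoints. I would equip $X\otimes\Om D$ with the plain tensor differential $d(x\otimes w)=d_{X}x\otimes w\pm x\otimes d_{\Om}w$, with the evident right $\Om D$-action by concatenation on the second factor, and with the left $\Om C$-action determined on the algebra generators $\si c$ (for $c\in\overline{C}$) by
\[
\si c\cdot(x\otimes w)=\widetilde{x}\otimes(\si\widetilde{d}\,|\,w),\qquad\text{where } T(c\otimes x)=\widetilde{x}\otimes\widetilde{d},
\]
with the convention that $\si\widetilde{d}=0$ when $|\widetilde{d}|=0$, and then extended to all of $\Om C=T(\si\overline{C})$ by associativity. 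The right action visibly commutes with the left one, so the content is that this is a well-defined dg module: associativity of the left action of words $\si c_{1}|\cdots|\si c_{n}$ and its compatibility with $d_{\Om}$, together with the Leibniz compatibility of the action with the total differential, should follow from the pentagon axiom \eqref{eq:pentagon} and the fact that $T$ is a chain map, while the counit axiom \eqref{eq:counit} governs the terms in which some $\widetilde{d}$ lies in degree $0$. Verifying these identities by a direct Koszul-sign computation is the step I expect to be the main obstacle.

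To see that $-\otimes_{\Om C}(X\otimes\Om D)\dashv\hom_{\Om D}(X\otimes\Om D,-)$ is a Quillen pair, I would check that the right adjoint preserves fibrations and acyclic fibrations, since by our chosen model structures both are detected by the forgetful functor to $\cat{Ch}_{\Bbbk}$ (as degreewise surjections and as quasi-isomorphisms, respectively). Because $X\otimes\Om D$ is free on $X$ as a graded right $\Om D$-module and $\Om D$ is connected, it is cofibrant (semifree) in $\cat{Mod}_{\Om D}$; consequently $\hom_{\Om D}(X\otimes\Om D,-)$ is, on underlying complexes, the functor $\hom_{\Bbbk}(X,-)$ equipped with a twisted differential. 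As $\Bbbk$ is a field, $\hom_{\Bbbk}(X,-)$ is exact and hence preserves degreewise surjections; and since its source is cofibrant, $\hom_{\Om D}(X\otimes\Om D,-)$ preserves quasi-isomorphisms between the (automatically fibrant) objects of $\cat{Mod}_{\Om D}$. Together these give preservation of fibrations and of acyclic fibrations.

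For the isomorphism $\big(-\otimes_{\Om C}(X\otimes\Om D)\big)\circ L_{C}\cong L_{D}\circ T_{*}$, I would recall from the proof of Proposition \ref{prop:cobar} the functors $L_{C}=-\square_{C}\P_{L}C$ and $R_{C}=-\otimes_{\Om C}\P_{R}C$ (and likewise $L_{D},R_{D}$), and evaluate both composites at an object $N$ of $\cat{Comod}_{C}$. Using the description $L_{C}(N)\cong(N\otimes\Om C,D_{L})$ and freeness of $L_{C}(N)$ over $\Om C$, the left-hand side has underlying graded object $N\otimes X\otimes\Om D$ with the evident free right $\Om D$-action; the same holds for $L_{D}(T_{*}N)$, since $T_{*}N=N\otimes X$ on underlying complexes. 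Thus the two sides agree as graded right $\Om D$-modules, naturally in $N$, and it remains only to match differentials. Writing out $D_{L}$ in both cases and substituting the reduced $D$-coaction $\rho_{T_{*}N}=(1_{N}\otimes T)(\rho_{N}\otimes 1_{X})$ of $T_{*}N$, the terms arising from $d_{N}$, $d_{X}$ and $d_{\Om}$ coincide on the two sides, while the remaining coaction term of $D_{L}$ for $L_{D}(T_{*}N)$ becomes precisely $\pm\,y_{j}\otimes\big(\si c^{j}\cdot(x\otimes w)\big)$ under our definition of the left $\Om C$-action, where $\rho_{N}(y)=y\otimes 1+y_{j}\otimes c^{j}$; this matches the coaction term of $D_{L}$ on the left-hand side, the counit axiom ensuring that $T(1_{C}\otimes-)$ contributes no spurious reduced terms. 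Hence the differentials agree and the isomorphism holds naturally in $N$.

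Finally, the second isomorphism should follow formally. All four functors appearing in the first isomorphism are left adjoints: $T_{*}\dashv T^{*}$, while $L_{C}\dashv R_{C}$ and $L_{D}\dashv R_{D}$ by Proposition \ref{prop:cobar}, and $-\otimes_{\Om C}(X\otimes\Om D)\dashv\hom_{\Om D}(X\otimes\Om D,-)$. A natural isomorphism between two composites of left adjoints induces, by uniqueness of right adjoints, a natural isomorphism between the corresponding composites of right adjoints; applied to $\big(-\otimes_{\Om C}(X\otimes\Om D)\big)\circ L_{C}\cong L_{D}\circ T_{*}$, this yields $R_{C}\circ\hom_{\Om D}(X\otimes\Om D,-)\cong T^{*}\circ R_{D}$, as required.
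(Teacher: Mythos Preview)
Your proposal is correct and follows essentially the same approach as the paper: the paper specifies exactly the same left $\Om C$-action on $X\otimes\Om D$ via $\si c\cdot(x\otimes w)=x_{i}\otimes\si d^{i}\cdot w$, asserts that the Quillen pair property is easy given the chosen model structures, and says the first isomorphism is a straightforward computation from which the second follows immediately. You have simply fleshed out those three sentences, correctly locating the role of the pentagon and counit axioms in checking the bimodule structure, the cofibrancy of $X\otimes\Om D$ in $\cat{Mod}_{\Om D}$ for the Quillen pair argument, and the uniqueness-of-adjoints argument for the second isomorphism.
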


\begin{proof} Given our choice of model structures  on module categories, it is easy to check that the adjunction above is indeed a Quillen pair. It suffices to establish the first isomorphism, since the second  is then an immediate consequence.  The computation is straightforward, given that  
the  left $\Om C$-action on $X\otimes \Om D$ induced by the braiding
$$T\colon C\otimes X \to X\otimes D: c\otimes x \mapsto x_{i}\otimes d^{i}$$
is specified by $\si c\cdot (x \otimes w)= x_{i}\otimes \si d^{i}\cdot w$ for all $c\in \overline C$, $x\in X$, and $w\in \Om D$.
\end{proof}

The coalgebraic analogue of Morita equivalence is defined as follows. 

\begin{defn} Let $C, D$ be in $\cat{Coalg}_{\Bbbk}$.    If there is a braiding $(X,T)$ from $C$ to $D$ such that $T_{*} \dashv T^{*}$ is a Quillen equivalence, then $C$ and $D$ are \emph{(homotopically) Morita-Takeuchi equivalent}.
\end{defn}

As a special case of \cite [Theorem 4.16]{berglund-hess}, we can describe Morita-Takeuchi-equivalent pairs of chain coalgebras in terms of the following notions, recalled from \cite{berglund-hess}.

\begin{defn} Let $X$ be a dualizable chain complex and $C$ a dg coalgebra.  The \emph{canonical coalgebra associated to $X$ and $C$} is   the dg coalgebra $X_*(C)$ with underlying chain complex
$$X_*(C) = X^\vee\tensor C\tensor X,$$
and comultiplication given by the composite 
$$
\xymatrix{X^\vee \tensor C\tensor X \ar[rr]^-{1\tensor \Delta\tensor 1} && X^\vee \tensor C\tensor C\tensor X \ar[d]^-{1\tensor 1\tensor u \tensor 1\tensor 1} \\
&&\big(X^\vee \tensor C\tensor X\big)\tensor \big(X^\vee \tensor C\tensor X\big),}
$$
where $u:\Bbbk \to X\tensor X^{\vee}$ is the coevaluation map.
The \emph{canonical braiding} $(X, T_{C}^{\mathrm{univ}}):C \to X_{*}(C)$ is defined by
$$T_{C}^{\mathrm{univ}}=u\tensor 1 \colon C\tensor  X \rightarrow X\tensor  \big( X^\vee \tensor C\tensor  X \big).$$
The \emph{canonical adjunction associated to $X$ and $C$} is the adjunction governed by the universal braided bimodule $(X,T_C^{\mathrm{univ}})$,
\begin{equation} \label{eq:descent adjunction}
\giantadjunction{\cat {Comod}_{C}}{\cat {Comod}_{X_*(C)}.}{(T_C^{\mathrm{univ}})_*}{(T_C^{\mathrm{univ}})^*}
\end{equation}
We say that \emph{$X$ satisfies effective homotopic descent} if  this adjunction is a Quillen equivalence.
\end{defn}

\begin{rmk}  The canonical braiding determined by a dualizable chain complex $X$ and a dg coalgebra $C$ is universal, in the sense that any braiding $(X,T): C \to C'$ factors as $(X, T_{C}^{\mathrm{univ}}): C\to X_{*}(C)$ followed by the change-of-coalgebras braiding $(\Bbbk, g_{T}): X_{*}(C) \to C'$, where $g_{T}$ is given by the composite
$$X_{*}(C) \xrightarrow {X^{\vee}\otimes T} X^{\vee}\otimes X \otimes C' \xrightarrow {ev \otimes C'} \Bbbk \otimes C'\cong C'.$$
\end{rmk}

\begin{defn} A morphism $g:C \to C'$ of dg coalgebras is \emph{copure} if the counit $g_{*}g^{*}(M) \to M$ of the $g_{*}\dashv g^{*}$ adjunction is a weak equivalence for all fibrant $C'$-comodules $M$.
\end{defn}

\begin{rmk} Since $\Bbbk$ is fibrant (seen as a chain complex concentrated in degree 0) in $\cat {Ch}_{\Bbbk}$, every coalgebra is fibrant as a comodule over itself.  Thus, if $g:C\to C'$ is copure, then $g_{*}g^{*}(C') \to C'$ is a weak equivalence.  Since $g_{*}g^{*}(C')\cong C$, seen as a $C'$-comodule via $g$, it follows that  a copure coalgebra map  is, in particular, a weak equivalence.
\end{rmk}

{The next result is a special case of the second part of \cite [Theorem 4.16]{berglund-hess}.}

\begin{thm}\cite [Theorem 4.16]{berglund-hess}  Let $C, C'$ be in $\cat{Coalg}_{\Bbbk}$.  If $C$ and $C'$ are Morita-Takeuchi equivalent via a braiding $(X,T)$ such that $X$ is dualizable, then $X$ satisfies effective homotopic descent with respect to $C$, and $g_{T}\colon X_*(C) \to C'$ is a copure weak equivalence of corings.
\end{thm}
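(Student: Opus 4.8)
The plan is to exploit the universal factorization of the braiding recorded in the remark on the universal property of the canonical braiding: $(X,T)$ factors as the canonical braiding $(X,T_{C}^{\mathrm{univ}})\colon C\to X_{*}(C)$ followed by the change-of-coalgebras braiding $(\Bbbk,g_{T})\colon X_{*}(C)\to C'$. Since the passage from a braiding to its adjunction (\cite[Proposition 3.17]{berglund-hess}) is compatible with composition, this yields a factorization of Quillen pairs
\begin{equation*}
T_{*}\;\cong\;(g_{T})_{*}\circ (T_{C}^{\mathrm{univ}})_{*},\qquad T^{*}\;\cong\;(T_{C}^{\mathrm{univ}})^{*}\circ (g_{T})^{*}.
\end{equation*}
Write $\Phi=(T_{C}^{\mathrm{univ}})_{*}$ and $\Psi=(g_{T})_{*}$. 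Both are left Quillen for our model structures: $\Psi$ is corestriction along $g_{T}$, hence the identity on underlying complexes, so it preserves degreewise injections (cofibrations) and quasi-isomorphisms (weak equivalences); and $\Phi$ agrees on underlying complexes with $-\otimes X$, which is exact over the field $\Bbbk$, so $\Phi$ likewise preserves cofibrations and weak equivalences.

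The first step is to show that $X$ satisfies effective homotopic descent, i.e.\ that $\Phi\dashv\Phi^{*}$ is a Quillen equivalence; I expect this to follow essentially from dualizability of $X$. I would verify the criterion \cite[Corollary 1.3.16]{hovey}. First, $\Phi$ reflects weak equivalences between cofibrant objects: weak equivalences in $\cat{Comod}_{C}$ and $\cat{Comod}_{X_{*}(C)}$ are detected on underlying complexes, where $\Phi$ is $-\otimes X$; since $T_{*}$ is a Quillen equivalence between nontrivial categories the complex $X$ is not acyclic, and tensoring over a field with a nonzero dualizable complex reflects quasi-isomorphisms. Second, I must show the derived counit $\Phi\Phi^{*}Y\to Y$ is a weak equivalence for every fibrant $Y$. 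Using that for dualizable $X$ the right adjoint has the explicit form $\Phi^{*}=-\square_{X_{*}(C)}(X^{\vee}\otimes C)$ (the dualizable case of \cite[Proposition 3.31]{berglund-hess}), this counit unwinds on underlying complexes to a map built from the evaluation and coevaluation of $X$; a direct computation, dualizing the deformation-retract arguments of Proposition \ref{prop:SDR}, should show it is a weak equivalence (indeed an isomorphism). This dualizability computation is the step I expect to be the main obstacle.

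With effective descent in hand, the second conclusion is formal. Since $T_{*}=\Psi\circ\Phi$ is a Quillen equivalence by the Morita--Takeuchi hypothesis and $\Phi$ is a Quillen equivalence by the previous step, and since Quillen equivalences satisfy two-out-of-three (a Quillen pair is a Quillen equivalence exactly when its left derived functor is an equivalence of homotopy categories, and $\mathbb{L}T_{*}\cong\mathbb{L}\Psi\circ\mathbb{L}\Phi$ because all objects are cofibrant), it follows that $\Psi=(g_{T})_{*}$ is a Quillen equivalence. Finally I identify the copure condition with the derived counit of $\Psi\dashv\Psi^{*}$: for a fibrant $C'$-comodule $M$, the object $(g_{T})^{*}M$ is cofibrant (all comodules are), so the derived counit is the honest counit $(g_{T})_{*}(g_{T})^{*}M\to M$, which is a weak equivalence because $\Psi$ is a Quillen equivalence \cite[Proposition 1.3.13]{hovey}. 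This is exactly the statement that $g_{T}$ is copure, and copure maps are weak equivalences by the remark following the definition of copure. Hence $g_{T}\colon X_{*}(C)\to C'$ is a copure weak equivalence, completing the proof.
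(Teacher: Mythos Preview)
The paper does not prove this statement; it is quoted as a special case of \cite[Theorem~4.16]{berglund-hess} and used as a black box. There is therefore no in-paper argument to compare your proposal against.

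Assessing your proposal on its own merits: the architecture is correct. Factoring the braiding through the universal one and applying two-out-of-three to the composite $T_{*}\cong (g_{T})_{*}\circ (T_{C}^{\mathrm{univ}})_{*}$ of left Quillen functors is exactly the right strategy, and your derivation of copurity of $g_{T}$ from $(g_{T})_{*}\dashv (g_{T})^{*}$ being a Quillen equivalence (via the underived counit on cofibrant/fibrant objects) is clean and correct.

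The gap is the one you explicitly flag: the counit computation establishing effective homotopic descent. Two comments on it. First, your phrasing (``essentially from dualizability of $X$'') commits you to proving a statement strictly stronger than the theorem, since you do not use the Morita--Takeuchi hypothesis at all in that step except to rule out $X$ acyclic. That stronger claim is in fact true---it is the homotopical analogue of the classical fact that $C$ and its comatrix coalgebra $X_{*}(C)$ are always Morita--Takeuchi equivalent for nonzero finite-dimensional $X$, dual to $R\sim M_{n}(R)$---but you have not carried it out. Second, the reference to Proposition~\ref{prop:SDR} is a red herring: that proposition concerns the cobar/path constructions and has nothing to do with the comatrix coalgebra. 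The actual verification is a direct manipulation with the duality data $(u,\mathrm{ev})$ of $X$ and the explicit description $\Phi^{*}=-\square_{X_{*}(C)}(X^{\vee}\otimes C)$, showing that the unit and counit of $\Phi\dashv\Phi^{*}$ are isomorphisms by the triangle identities; no deformation retract enters. Until that computation is written down, what you have is a sound outline rather than a proof.
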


We can deduce the promised invariance of coHochschild homology from this description of Morita-Takeuchi equivalent coalgebras. 

\begin{prop}\label{prop.MT}  Let $C, C'$ be in $\cat{Coalg}_{\Bbbk}$.  If $C$ and $C'$ are Morita-Takeuchi equivalent via a braiding $(X,T)$ such that the total dimension of $X$ is finite, then  $\cohoch(C)\simeq \cohoch (C')$.
\end{prop}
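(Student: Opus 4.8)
The plan is to combine ``agreement'' for coHochschild homology with the Morita invariance of the Hochschild homology of dg categories. By Proposition~\ref{prop:cohoch-hoch} and the chain of equivalences in the proof of Proposition~\ref{prop:cohoch-props}, we have $\cohoch(C)\simeq \hoch(\cat{dgfree}_{\Om C})$ and $\cohoch(C')\simeq \hoch(\cat{dgfree}_{\Om C'})$. Since every quasi-equivalence of dg categories is a Morita equivalence \cite[Section 4.4]{toen} and Hochschild homology is a Morita invariant \cite[Section 5.2]{toen}, it suffices to exhibit a quasi-equivalence between the dg categories $\cat{dgfree}_{\Om C}$ and $\cat{dgfree}_{\Om C'}$.

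First I would transport the Morita--Takeuchi equivalence to the module side. By Proposition~\ref{prop:cobar} the adjunctions $L_C\dashv R_C$ and $L_{C'}\dashv R_{C'}$ are Quillen equivalences, and Lemma~\ref{lem:braiding-compatible} provides a natural isomorphism $F\circ L_C\cong L_{C'}\circ T_*$, where $F=-\otimes_{\Om C}(X\otimes \Om C')$ with right adjoint $G=\hom_{\Om C'}(X\otimes \Om C',-)$. Because $L_C$, $L_{C'}$, and $T_*$ are all left Quillen equivalences (the last by the Morita--Takeuchi hypothesis), it follows that $F$ is a left Quillen equivalence, so $F\dashv G$ is a Quillen equivalence between $\cat{Mod}_{\Om C}$ and $\cat{Mod}_{\Om C'}$.

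Next I would use the finiteness of $X$ to restrict $F$. If $M$ is finitely generated quasi-free over $\Om C$, with underlying graded module $V\otimes \Om C$ for a finite-dimensional $V$, then $F(M)$ has underlying graded module $(V\otimes X)\otimes \Om C'$, which is free over $\Om C'$ on $V\otimes X$; since the total dimension of $X$ is finite, $V\otimes X$ is finite-dimensional, so $F(M)\in \cat{dgfree}_{\Om C'}$. Thus $F$ restricts and corestricts to $F'\colon \cat{dgfree}_{\Om C}\to \cat{dgfree}_{\Om C'}$. All objects of both subcategories are fibrant (every module is fibrant) and cofibrant (being quasi-free), so Lemma~\ref{lem:q-e}(1) applies to $F\dashv G$ once we verify its remaining hypothesis: that for every $Y\in \cat{dgfree}_{\Om C'}$ there is a weak equivalence $\widehat{G(Y)}\xrightarrow{\simeq} G(Y)$ with $\widehat{G(Y)}\in \cat{dgfree}_{\Om C}$.

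This last verification is the crux of the argument. Since $F\dashv G$ is a Quillen equivalence, $\mathbb{R}G$ is an inverse to $\mathbb{L}F$ on homotopy categories, so it carries the compact object $Y$ to a compact object of the derived category of $\Om C$-modules; as $Y$ is fibrant, $G(Y)\simeq \mathbb{R}G(Y)$ is thus a perfect $\Om C$-module. Here the connectivity of $C$ is essential: because $\Om C$ is connected, so that $H_0(\Om C)=\Bbbk$ admits no nontrivial idempotents, every perfect $\Om C$-module is quasi-isomorphic to a finitely generated quasi-free module, which furnishes the required $\widehat{G(Y)}\in \cat{dgfree}_{\Om C}$ together with a cofibrant-replacement weak equivalence to $G(Y)$. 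The failure of $\cat{dgfree}$ to be closed under retracts, which would otherwise obstruct quasi-essential surjectivity, is precisely what connectivity rules out. Granting this, Lemma~\ref{lem:q-e}(1) shows that $F'$ is a quasi-equivalence, whence $\hoch(\cat{dgfree}_{\Om C})\cong \hoch(\cat{dgfree}_{\Om C'})$ and therefore $\cohoch(C)\simeq \cohoch(C')$.
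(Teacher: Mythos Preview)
Your overall strategy is sound and close in spirit to the paper's, but the crucial final step rests on a claim that is not correct as stated. You assert that ``$\Om C$ is connected, so that $H_0(\Om C)=\Bbbk$.'' Connectedness of $C$ only gives $\overline C_0=0$; it does not force $C_1=0$, and $(\Om C)_0=T(\si \overline C)_0=T(C_1)$, which is a free associative algebra that is far from $\Bbbk$ whenever $C_1\neq 0$. Thus $\Om C$ need not be connected, and $H_0(\Om C)$ need not equal $\Bbbk$. Even granting an absence of idempotents in $H_0(\Om C)$, your inference ``no idempotents $\Rightarrow$ every perfect $\Om C$-module is quasi-isomorphic to an object of $\cat{dgfree}_{\Om C}$'' is not immediate: that condition controls summands of $\Om C$ itself, not arbitrary retracts inside the thick subcategory it generates, so the idempotent-completion issue you correctly flag is not actually ruled out by your argument.

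The paper circumvents this problem by not comparing $C$ and $C'$ directly. It invokes the Berglund--Hess structure theorem to factor the braiding as the universal one $(X,T_C^{\mathrm{univ}})\colon C\to X_*(C)$ followed by a copure weak equivalence $g_T\colon X_*(C)\to C'$, so that $\cohoch(X_*(C))\simeq\cohoch(C')$ for elementary reasons. For the canonical braiding, the right adjoint $G=\hom_{\Om X_*(C)}(X\otimes\Om X_*(C),-)$ is computed explicitly as $X^\vee\otimes -$, which visibly carries $\cat{dgfree}_{\Om X_*(C)}$ into $\cat{dgfree}_{\Om C}$; combined with your own observation that $F$ takes $\cat{dgfree}_{\Om C}$ into $\cat{dgfree}_{\Om X_*(C)}$, the hypotheses of Lemma~\ref{lem:q-e} are met without any appeal to compactness, idempotents, or minimal models. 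In short, the paper trades the general braiding $C\to C'$ for the universal one $C\to X_*(C)$ precisely so that both adjoints manifestly preserve the ``finitely generated quasi-free'' condition, which is the step your argument cannot secure.
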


Note that if $X$ has finite total dimension, then it is certainly dualizable.

\begin{proof} Since $X$ satisfies effective homotopic descent with respect to $C$, the canonical adjunction
\begin{equation*}
\giantadjunction{\cat {Comod}_{C}}{\cat {Comod}_{X_*(C)}.}{(T_C^{\mathrm{univ}})_*}{(T_C^{\mathrm{univ}})^*}
\end{equation*} 
is a dg Quillen equivalence.

It follows that 
$$\cohoch(C)\simeq \hoch(\cat{dgcofree}_{C}) \simeq \hoch (\cat{dgcofree}_{X_{*}(C)})\simeq \cohoch \big(X_{*}(C)\big) \simeq \cohoch (C'),$$
where the first and third weak equivalences follow from Agreement (Proposition \ref{prop:cohoch-props}), and the last equivalence from the fact that $g_{T}$ is copure and therefore a weak equivalence.  

Lemma \ref{lem:q-e}(2) and Lemma \ref{lem:braiding-compatible} suffice to establish the second equivalence, as we now show. To simplify notation, taking $D = X_*(C)$ in Lemma \ref{lem:braiding-compatible}, let $F\dashv G$ denote the adjunction
$$\big(-\otimes_{\Om C}(X\otimes \Om X_{*}(C))\big)\dashv \hom_{\Om X_{*}(C)}(X\otimes \Om X_{*}(C),-),$$
and let $T=T_C^{\mathrm{univ}}$. 

Since $T^*$ is right Quillen, it preserves fibrant objects.  Moreover, if $N$ is a $X_{*}(C)$-comodule such that there exists an $\Om X_{*}(C)$-module $M$ and a quasi-isomorphism $j: N \xrightarrow \simeq R_{X_*(C)}M$, then
$$T^*(j): T^*(N) \xrightarrow \simeq T^*(R_{X_*(C)}M)$$
is also a quasi-isomorphism, since all modules are fibrant, and $T^*$ is a right Quillen functor.  By Lemma \ref{lem:braiding-compatible} 
$$T^*(R_{X_*(C)}M)\cong R_C\circ G(M),$$
so there is a quasi-isomorphism
$$T^*(N) \xrightarrow \simeq R_C\circ G(M)$$
or, equivalently, a quasi-isomorphism
$$L_C T^*(N)\xrightarrow\simeq G(M).$$
Moreover, because 
$$ G(M)=\hom_{\Om X_{*}(C)}(X\otimes \Om X_{*}(C), M)\cong \hom (X,M)\cong X^{\vee}\otimes M,$$
if $M$ is actually an object of $\cat{dgfree}_{\Om X_{*}(C)}$, then $G(M)$ is an object of $\cat {dgfree}_{\Om C}$.
We conclude that $T^{*}$ restricts and corestricts to a functor 
$$T^{*}: \cat {dgcofree}_{X_{*}(C)}\to \cat {dgcofree}_{C}.$$

By Lemma \ref{lem:q-e}(2), to verify that $T^{*}$ is actually a quasi-equivalence, it remains to check that for every $N$ in $\cat {dgcofree}_{C}$, there is an $N'\in \cat{dgcofree}_{X_{*(C)}}$ and a quasi-isomorphism $T_{*}(N)\xrightarrow\simeq N'$.  If $N$ is an object of $\cat {dgcofree}_{C}$, then there is an object $M$ in $\cat{dgfree}_{\Om C}$ and a quasi-isomorphism $j: L_{C}(N) \xrightarrow \simeq M$. Since $F$ is left Quillen, and both $L_{C}(N)$ and $M$ are cofibrant $\Om C$-modules, it follows that $F(j): F\big(L_{C}(N)\big) \to F(M)$ is also a quasi-isomorphism. By Lemma \ref{lem:braiding-compatible}, $F\big(L_{C}(N)\big) \cong L_{X_{*}(C)}\big(T_{*}(N)\big)$, whence there is a quasi-isomorphism $L_{X_{*}(C)}\big(T_{*}(N)\big)\xrightarrow \simeq F(M)$, where $F(M)$ is an object of $\cat {dgfree}_{\Om X_{*}(C)}$.  Indeed, if $M$ is quasi-free on $V$ of finite total degree, then $F(M)$ is quasi-free on $V\otimes X$, which is also free of finite total degree.   If $T_{*}(N)$ is actually fibrant, then it is itself an object of $\cat {dgcofree}_{X_{*}(C)}$, and we can set $N'=T_{*}(N)$.  If not, then for any fibrant replacement of $T_{*}(N)$ in $\cat{Comod}_{X_{*}(C)}$ will be an object of $\cat {dgcofree}_{X_{*}(C)}$ and can play the role of $N'$.
\end{proof}


\section{Topological coHochschild homology of spectra}\label{sec-spec}
We now consider a spectral version of the constructions and results in section  \ref{sec:dg}. Here we work in any monoidal model category of spectra. 
We show that our results are model invariant in Proposition~\ref{prop.model.invariant} below.

 \subsection{The general theory}
 
Let $k$ be a commutative ring spectrum, $C$  a $k$-coalgebra with comultiplication $\Delta: C \to C \sm_k C$, and $M$  a $C$-bicomodule with right coaction $\rho: M \to M \sm_k C$ and left coaction $\lambda: M \to C \sm_k M$.  Henceforth we write $\sm$ for $\sm_k$ and $C^{\sm n}$ for the $n$-fold smash product of $C$ over $k$.  

\begin{defn}
The {\em coHochschild complex} $\widehat\cH(M,C)$ is the cosimplicial spectrum with
$$  \widehat\cH(M,C)^n = M \sm C^{\sm n} $$
and coface operators  
$$
d^i = \left\{ \begin{array}{ll} \rho \sm \id_C^{\sm n} & i=0 \\   \id_M \sm \id_C^{\sm i-1} \sm \Delta \sm \id_C^{\sm n-i}& 1\leq i \leq n \\ \tau \circ (\lambda \sm \id_C^{\sm n}) & i=n+1  \end{array} \right.
$$
where $\tau: C \sm M \sm C^{\sm n} \to M \sm C^{\sm n+1}$ cycles the first entry to the last entry. {The codegeneracies involve the counit of $C$.}
\end{defn}

Note that one can take $M = C$ with $\lambda = \rho = \Delta $.  In this case $\widehat\cH(C,C) = \widehat\cH(C)$ is the {\em cyclic cobar complex}.

{
Next we define the homotopy invariant notion of {topological coHochschild homology}.  We use $\wb\Tot X^{\bullet}$ to denote the totalization of a Reedy fibrant replacement of the cosimplicial spectrum $X^{\bullet}$.  By~\cite[19.8.7]{hh}, this is a model of the homotopy inverse limit. {\em Topological coHochschild homology} is defined as the derived totalization of the coHochschild complex, 
$$\mbox{coTHH}(M,C) = \wb\Tot \widehat\cH(M,C).$$ 
We abbreviate coTHH$(C,C)$ as coTHH$(C)$.
  
The next statement shows that $\coTHH$ is homotopy invariant.

\begin{lem}\label{lem.invariant}  Let $\cC$ be a monoidal model category of $k$-module spectra. If $f: C \to C'$ is a map of coalgebra spectra in $\cC$ such that  in the underlying category of $k$-module spectra  $f$ is a weak equivalence, and $C, C'$ are cofibrant, then the induced map $\coTHH(C) \to \coTHH(C')$ is a weak equivalence.
\end{lem}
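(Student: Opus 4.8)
The plan is to reduce the statement to two facts: that $f$ induces a \emph{degreewise} weak equivalence of cosimplicial spectra $\widehat\cH(f)\colon \widehat\cH(C)\to\widehat\cH(C')$, and that $\wb\Tot$ — being a model for the homotopy inverse limit, as recalled just before the lemma — carries degreewise weak equivalences of cosimplicial spectra to weak equivalences. First I would observe that the coalgebra map $f$ induces a map of cosimplicial spectra whose value in cosimplicial degree $n$ is $f^{\sm(n+1)}\colon C\sm C^{\sm n}\to C'\sm (C')^{\sm n}$; compatibility with the coface and codegeneracy operators is immediate, since $f$ being a map of coalgebras means it commutes with $\Delta$, with the coactions $\lambda$ and $\rho$, and with the counit.

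Next I would show that each $f^{\sm(n+1)}$ is a weak equivalence. Since $\cC$ is a monoidal model category of $k$-module spectra, the smash product $\sm = \sm_k$ satisfies the pushout-product axiom; in particular the smash product of cofibrant objects is again cofibrant, and smashing with a fixed cofibrant object on either side is a left Quillen endofunctor. By Ken Brown's lemma such a functor preserves weak equivalences between cofibrant objects. Writing $f^{\sm(n+1)}$ as the composite of the $n+1$ maps
$$(C')^{\sm i}\sm f\sm C^{\sm(n-i)},\qquad i=0,\dots,n,$$
each of which smashes $f$ into a single slot with smash powers of the cofibrant spectra $C$ and $C'$ on either side, I see that every factor is a weak equivalence between cofibrant objects, whence $f^{\sm(n+1)}$ is a weak equivalence.

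Finally, applying a functorial Reedy fibrant replacement to $\widehat\cH(f)$ produces a degreewise weak equivalence of Reedy fibrant cosimplicial spectra, and $\Tot$ preserves weak equivalences between Reedy fibrant objects; equivalently, $\wb\Tot$ models the homotopy inverse limit, which is invariant under degreewise weak equivalences. Hence the induced map $\coTHH(C)=\wb\Tot\widehat\cH(C)\to\wb\Tot\widehat\cH(C')=\coTHH(C')$ is a weak equivalence. The main obstacle is the degreewise step: one must ensure that every object occurring in the decomposition of $f^{\sm(n+1)}$ is cofibrant, so that Ken Brown's lemma applies at each stage. This is precisely where the hypothesis that both $C$ and $C'$ are cofibrant is used, together with the closure of cofibrant objects under $\sm$ guaranteed by the monoidal model structure; the totalization step is then formal.
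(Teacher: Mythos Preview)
Your proposal is correct and follows essentially the same approach as the paper's proof, which simply notes that the cofibrancy of $C$ and $C'$ together with the monoidal model structure makes $\widehat\cH(f)$ a levelwise weak equivalence, and that homotopy inverse limits preserve levelwise weak equivalences. You have merely spelled out in detail (via the slot-by-slot decomposition and Ken Brown's lemma) what the paper leaves implicit in the phrase ``since $\cC$ is a monoidal model category.''
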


\begin{proof}
Since $C$ and $C'$ are cofibrant, and $\cC$ is a monoidal model category, $f$ induces a levelwise weak equivalence $\widehat\cH(C) \to \widehat\cH(C')$.  Since homotopy inverse limits preserve levelwise weak equivalences, the statement follows. 
\end{proof}

In addition, $\coTHH$ is model independent. 

\begin{prop}\label{prop.model.invariant}
{Topological coHochschild homology} is independent of the model of spectra used.
\end{prop}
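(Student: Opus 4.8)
The plan is to reduce the statement to a single monoidal Quillen equivalence and then to check that such an equivalence carries coTHH to coTHH. First I would invoke the standard comparison theory of Mandell, May, Schwede, and Shipley, and of Schwede, according to which any two of the usual monoidal model categories of spectra (symmetric spectra, orthogonal spectra, $S$-modules, $W$-spaces, and their $k$-module analogues) are linked by a finite zig-zag of monoidal Quillen equivalences whose left adjoints are strong symmetric monoidal. Since weak equivalence of coTHH is transitive along the zig-zag, it suffices to treat one such Quillen equivalence $\adjunction{\cC}{\cD}{F}{G}$ with $F$ strong symmetric monoidal. Moreover, because $LF$ is then a \emph{symmetric monoidal} equivalence of homotopy categories, coalgebras and bicomodules in $\cD$ correspond (up to coalgebra equivalence) to those in $\cC$, so it is enough to compare $\coTHH(C)$ with $\coTHH(FC)$.

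Next I would observe that a strong symmetric monoidal functor preserves comonoid and comodule structures: if $C$ is a $k$-coalgebra in $\cC$ and $M$ a $C$-bicomodule, then $FC$ is a coalgebra and $FM$ an $FC$-bicomodule, with structure maps obtained by conjugating $\Delta$, $\rho$, $\lambda$, the cyclic map $\tau$, and the counit by the monoidal isomorphisms $F(C^{\sm n})\cong (FC)^{\sm n}$. Hence there is a natural isomorphism of cosimplicial spectra $F\big(\widehat\cH(M,C)^{\bullet}\big)\cong \widehat\cH(FM,FC)^{\bullet}$. Taking $C$ cofibrant, so that each level of $\widehat\cH(C)$ is cofibrant and $F$ correctly computes $LF$, the problem reduces to showing that the derived totalization $\overline{\Tot}=\operatorname{holim}_{\Delta}$ (a homotopy inverse limit, by \cite[19.8.7]{hh}) is preserved by $LF$.

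The hard part will be exactly this commutation: $\overline{\Tot}$ is a homotopy limit, and a left Quillen functor need not commute with homotopy limits. The resolution is to exploit that $F$ is half of a Quillen equivalence. The derived right adjoint $RG$ \emph{does} preserve homotopy limits, being the right derived functor of a right adjoint, and $RG$ is an equivalence of homotopy categories, so it reflects isomorphisms. Concretely, I would consider the canonical comparison map
\[
\phi\colon LF\big(\operatorname{holim}_{\Delta}\widehat\cH(C)\big)\longrightarrow \operatorname{holim}_{\Delta}LF\big(\widehat\cH(C)\big)\cong \operatorname{holim}_{\Delta}\widehat\cH(FC),
\]
apply $RG$, and use that $RG$ commutes with $\operatorname{holim}_{\Delta}$, that $\operatorname{holim}_{\Delta}$ preserves levelwise weak equivalences, and that the derived unit $\widehat\cH(C)^{\bullet}\to RGLF\big(\widehat\cH(C)^{\bullet}\big)$ is a levelwise equivalence. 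Then both the source and target of $RG(\phi)$ are identified with $\operatorname{holim}_{\Delta}\widehat\cH(C)$ and $RG(\phi)$ with the identity, so $RG(\phi)$ is an isomorphism in $\Ho\cC$; since $RG$ reflects isomorphisms, $\phi$ is a weak equivalence. Equivalently, one may phrase the whole argument $\infty$-categorically: a monoidal Quillen equivalence induces a symmetric monoidal equivalence of the associated homotopy theories, which preserves coalgebras, comodules, the cosimplicial cobar object, and all homotopy limits, and therefore preserves coTHH. Assembling the pieces gives $LF(\coTHH(C))\simeq\coTHH(FC)$, and iterating along the zig-zag yields model independence. The only genuine subtlety, worth isolating, is precisely the commutation of a left Quillen equivalence with totalization; the rest is the formal bookkeeping of monoidal functors on cyclic cobar complexes.
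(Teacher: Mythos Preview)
Your proposal is correct and follows essentially the same route as the paper: reduce via the standard zig-zag of strong monoidal Quillen equivalences to a single adjunction, use strong monoidality of the left adjoint to identify $F\widehat\cH(C)^{\bullet}\cong\widehat\cH(FC)^{\bullet}$, and then argue that the derived right adjoint commutes with $\overline{\Tot}$ (the paper deduces this from $L^{\bullet}$ preserving constant cosimplicial objects, you from $RG$ being a right derived right adjoint), concluding by the equivalence of homotopy categories. The only cosmetic difference is that the paper packages the cosimplicial step through the Reedy Quillen equivalence of \cite[15.4.1]{hh}, whereas you work directly with homotopy limits and a reflecting-isomorphisms argument; these are equivalent formulations of the same idea.
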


\begin{proof}
 This follows from Lemma~\ref{lem.inv} below, since any two monoidal model categories of spectra are connected by Quillen equivalences via a strong monoidal left adjoint.   A universal approach to constructing these monoidal Quillen equivalences is described in~\cite[4.7]{Shi01}; explicit constructions are given in~\cite[0.1,0.2]{MMSS},~\cite[5.1]{schwede-compare}, and~\cite[1.1,1.8]{MM}.  This is also summarized in a large diagram in~\cite[7.1]{moneq}.
 \end{proof}
 
\begin{lem}\label{lem.inv}
Let $L: \cC \to \cD$ be the left adjoint of a strong monoidal Quillen equivalence between two monoidal model categories of $k$-module spectra with $\overline{L}$ the associated derived functor.   Let $C$ be a coalgebra spectrum that is cofibrant as an underlying $k$-module spectrum. 
Then $\mbox{coTHH}(LC)$ is weakly equivalent to $\overline{L}\coTHH(C)$.
\end{lem}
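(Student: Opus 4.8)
The plan is to use strong monoidality to identify the coHochschild cosimplicial spectrum of $LC$ with the levelwise image under $L$ of that of $C$, and then to prove that derived totalization commutes with $\overline{L}$ by routing through the right adjoint, since a left adjoint cannot be expected to commute with limits directly.

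First I would record the effect of $L$ on the coHochschild construction. Because $L$ is strong monoidal, the comultiplication $\Delta\colon C\to C\sm C$ is carried to a comultiplication $LC\to LC\sm LC$, so that $LC$ is a coalgebra spectrum in $\cD$; applying the natural isomorphisms $L(A\sm B)\cong LA\sm LB$ levelwise yields an isomorphism of cosimplicial spectra $L\,\widehat\cH(C)\cong\widehat\cH(LC)$. Compatibility with the coface maps $d^i$ and with the codegeneracies is immediate from the coherence of a strong monoidal functor, since each structure map of $\widehat\cH(-)$ is assembled from $\Delta$, the coactions, the counit, and symmetry isomorphisms. I write $L_\bullet$ for the induced left Quillen functor on cosimplicial objects for the Reedy model structures, with right adjoint $R_\bullet$ given by $R$ applied levelwise; $(L_\bullet,R_\bullet)$ is again a Quillen equivalence. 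Since $C$ is cofibrant as a $k$-module spectrum and $\cC$ is a monoidal model category, each level $\widehat\cH(C)^n\cong C^{\sm(n+1)}$ is cofibrant, so $\widehat\cH(C)$ is levelwise cofibrant; applying $L$ to a Reedy cofibrant replacement $Q\widehat\cH(C)\xrightarrow{\simeq}\widehat\cH(C)$ gives a levelwise weak equivalence onto $L_\bullet\widehat\cH(C)\cong\widehat\cH(LC)$, whence $\overline{L_\bullet}\,\widehat\cH(C)\simeq\widehat\cH(LC)$.

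It remains to show $\wb\Tot\circ\overline{L_\bullet}\simeq\overline{L}\circ\wb\Tot$. I would first treat the right adjoint. The functor $\wb\Tot$ is the derived functor of the right Quillen functor $\Tot$ on Reedy fibrant cosimplicial objects, and $R$, being a right adjoint, preserves all limits; as $\Tot$ is a limit and $R$ carries Reedy fibrant objects to Reedy fibrant objects (preserving fibrations and the matching objects, which are limits), the square
$$\xymatrix{\cD^{\mathbf\Delta}\ar[r]^-{\Tot}\ar[d]_-{R_\bullet}&\cD\ar[d]^-{R}\\ \cC^{\mathbf\Delta}\ar[r]^-{\Tot}&\cC}$$
commutes on the nose. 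Passing to derived functors gives $\overline{R}\circ\wb\Tot\cong\wb\Tot\circ\overline{R_\bullet}$, the point being that no extra fibrant replacement is needed after applying $R$ or $R_\bullet$.

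Finally I would transfer this to the left adjoints using the Quillen equivalence. Since $\overline{L}\dashv\overline{R}$ and $\overline{L_\bullet}\dashv\overline{R_\bullet}$ are adjoint equivalences of homotopy categories, precomposing $\overline{R}\circ\wb\Tot\cong\wb\Tot\circ\overline{R_\bullet}$ with $\overline{L_\bullet}$, postcomposing with $\overline{L}$, and using $\overline{L}\,\overline{R}\cong\id$ together with $\overline{R_\bullet}\,\overline{L_\bullet}\cong\id$ produces $\wb\Tot\circ\overline{L_\bullet}\cong\overline{L}\circ\wb\Tot$. Evaluating at $\widehat\cH(C)$ and invoking $\overline{L_\bullet}\,\widehat\cH(C)\simeq\widehat\cH(LC)$ then yields
$$\coTHH(LC)=\wb\Tot\,\widehat\cH(LC)\simeq\wb\Tot\,\overline{L_\bullet}\,\widehat\cH(C)\simeq\overline{L}\,\wb\Tot\,\widehat\cH(C)=\overline{L}\,\coTHH(C).$$
The hard part is exactly this commutation of $\overline{L}$ with derived totalization: the honest content lies in the strict square for the right adjoint plus the Reedy bookkeeping (fibrant replacements for $\Tot$ and $R$, cofibrant replacements for $L_\bullet$), after which the passage to the left adjoint is formal from the Quillen equivalence.
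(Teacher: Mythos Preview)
Your proof is correct and follows essentially the same approach as the paper: identify $L_\bullet\widehat\cH(C)\cong\widehat\cH(LC)$ via strong monoidality, use levelwise cofibrancy of $\widehat\cH(C)$ to control the Reedy cofibrant replacement, establish that $R$ commutes strictly with $\Tot$ (as a limit) and hence $\overline{R}$ commutes with $\wb\Tot$, and then transfer this to $\overline{L}$ via the Quillen equivalence. The only difference is organizational: the paper first applies $\overline{R_\bullet}$ to the identification $\overline{L_\bullet}\widehat\cH(C)\simeq\widehat\cH(LC)$ and then takes $\wb\Tot$, whereas you establish the abstract commutation $\wb\Tot\circ\overline{L_\bullet}\simeq\overline{L}\circ\wb\Tot$ first and evaluate last; your version also makes the Reedy bookkeeping (preservation of fibrant objects by $R_\bullet$) more explicit, which the paper leaves implicit in the phrase ``and so the associated derived functors also commute.''
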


\begin{proof}
Let $R$ be the right adjoint to $L.$
By~\cite[15.4.1]{hh}, levelwise prolongation, denoted $\Ldot, \Rdot,$ induces a Quillen equivalence between the associated Reedy model categories of cosimplicial spectra.
Let $\overline{\Ldot}$ and $\oRdot$ denote the derived functors. 

Since $LC \sm LC \iso L(C\sm C)$, it follows that $LC$ is a coalgebra spectrum in $\cD$ and that  $\Ldot \widehat\cH(C) \iso \widehat\cH(LC)$.  
Since $C$ is cofibrant, $\widehat\cH(C)$ is levelwise cofibrant and so is its cofibrant replacement in the Reedy model structure.  Since $L$ preserves weak equivalences between cofibrant objects, it follows that $\oLdot  \widehat\cH(C)$ is weakly equivalent to $\Ldot\widehat\cH(C)$ and hence also to $\widehat\cH(LC)$.  
  
Applying $\oRdot$ to both sides of this equivalence, we have that $\oRdot \oLdot  \widehat\cH(C)$ is weakly equivalent to $\oRdot \widehat\cH(LC).$ Since $\oLdot$ and $\oRdot$ form an equivalence of homotopy categories, $ \oRdot \oLdot $ is naturally weakly equivalent to the identity and therefore
\begin{equation}\label{eqn:coTHH-L}\widehat \cH(C) \simeq \oRdot \oLdot  \widehat\cH(C) \simeq \oRdot \widehat\cH(LC).\end{equation}

Let $\con X$ denote the constant cosimplicial object on $X$.
Since $\Ldot (\con X) \cong \con (LX)$, the right adjoints also commute, i.e.,  $\lim \Rdot X^{\bullet} \cong R \lim X^{\bullet}$, and so the associated derived functors also commute. In particular, $\oR \mbox{coTHH}(LC)$ is weakly equivalent to the homotopy inverse limit of $\oRdot  \widehat\cH(LC)$.  
It follows from (\ref{eqn:coTHH-L}) that $\coTHH(C) \simeq \oR \mbox{coTHH}(LC)$. Since $\oL$ and $\oR$ form an equivalence of homotopy categories, this is equivalent to the statement in the lemma. 

\end{proof}
}

{

It turns out that coalgebras in spaces with respect to the Cartesian product or in pointed spaces with respect to the smash product are of a very restricted nature. 
The only possible co-unital coalgebra structure on a space is given by the diagonal $\Delta: X \to X \times X$.  Similarly, for a pointed space, the only possible co-unital coalgebra structure exists on a pointed space of the form $X_+$ and is induced by the diagonal $\Delta_{+}: X_+ \to X_+ \sm X_+$.  

It follows that strictly counital coalgebra spectra are also very restricted.
Consider a symmetric spectrum $Z$.  Since the zeroth level of $Z \sm Z$ is the smash product of two copies of level zero of $Z$,  the zeroth level of a co-unital coalgebra symmetric spectrum must have a disjoint base point, which we denote $(Z_{0})_{+}$.   In fact, even more structure is forced in any of the symmetric monoidal categories of spectra.  Let $\Spec$ refer to the $\bS$-modules of \cite{ekmm} or any diagram category of spectra, including symmetric spectra (over simplicial sets or topological spaces, see \cite{HSS, MMSS}), orthogonal spectra (see~\cite{MMSS, MM}), $\Gamma$-spaces (see \cite{Segal, BousfieldFrie}), and $\mathcal{W}$-spaces (see \cite{Anderson}). 

\begin{prop}\label{prop.cocom}~\cite{PS} In $\Spec,$ co-unital coalgebras over the sphere spectrum are cocommutative.  In fact,  if $C$ is a co-unital coalgebra over the sphere spectrum, then $\susp C_{0} \to C$ is surjective.
\end{prop}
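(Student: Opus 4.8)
The plan is to push everything down to level zero, exploit the rigidity of coalgebras in pointed spaces there, and then climb back up the spectrum using the two counit axioms. First I would use that evaluation at level zero, $\mathrm{Ev}_0\colon \Spec \to \cat{Top}_{*}$, is strong monoidal: in each of the diagram categories of spectra one has $(Z\sm Z')_0 \iso Z_0\sm Z'_0$ and $\bS_0 = S^0$. Consequently $\mathrm{Ev}_0$ sends the co-unital $\bS$-coalgebra $C$ to a co-unital coalgebra in pointed spaces under the smash product. By the observation recalled just before the proposition, any such coalgebra has the form $(C_0)_+$, with counit collapsing everything but a disjoint basepoint to the non-basepoint of $S^0$ and comultiplication the diagonal $\Delta_{+}$; here $C_0$ is an (unpointed) space. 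In particular $\mathrm{Ev}_0 C$ is cocommutative.

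Next I would produce the comparison map as an adjunction counit. The functor $\mathrm{Ev}_0$ has a strong monoidal left adjoint $F_0 = \Sigma^\infty$, with $(\Sigma^\infty A)_n = A\sm S^n$, and the adjunction $F_0 \dashv \mathrm{Ev}_0$ is monoidal, so its counit at $C$ is a morphism of coalgebra spectra
$$\varepsilon_C\colon \susp C_0 = \Sigma^\infty\big((C_0)_+\big) = F_0\,\mathrm{Ev}_0\,C \longrightarrow C,$$
whose level-$n$ component is the iterated structure map $\theta_n\colon (C_0)_+\sm S^n \to C_n$. Because $\Delta_0$ is the diagonal, the source $\susp C_0$ is cocommutative.

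The heart of the proof is to show that each $\theta_n$ is surjective, which I would do by induction on $n$, the case $n=0$ being the identity. Since $\theta_n$ factors as $(C_0)_+\sm S^{n-1}\sm S^1 \xrightarrow{\theta_{n-1}\sm S^1} C_{n-1}\sm S^1 \xrightarrow{\sigma_{n-1}} C_n$ and $\theta_{n-1}$ is surjective by hypothesis, it suffices to prove that the final structure map $\sigma_{n-1}$ is surjective. Given a non-basepoint $c\in C_n$, I would locate $\Delta_n(c)$ in the wedge decomposition $(C\sm C)_n = \bigvee_{p+q=n}\Sigma_{n+}\wedge_{\Sigma_p\times\Sigma_q}(C_p\sm C_q)$; as a single point it lies in exactly one summand $(p_0,q_0)$. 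If $q_0\geq 1$, the counit axiom $(\id\sm\epsilon)\Delta = \id$ recovers $c$ through the structure map $C_{p_0}\sm S^{q_0}\to C_n$, which factors through $\sigma_{n-1}$, so $c\in\im\sigma_{n-1}$. If $q_0 = 0$, the summand is $C_n\sm (C_0)_+$, and the counit axiom $(\epsilon\sm\id)\Delta = \id$ recovers $c$ through the left structure map $S^n\sm C_0 \to C_n$, whose image coincides with that of $\theta_n$ because the iterated structure maps are equivariant for the $\Sigma_n$-actions on $S^n$ and $C_n$. Either way $c\in\im\sigma_{n-1}$, completing the induction. I expect this symmetric-group bookkeeping—matching the left and right iterated structure maps and checking that their common image is $\Sigma_n$-invariant—to be the main obstacle, though it is a matter of care rather than of real difficulty.

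Finally I would deduce cocommutativity. Being levelwise surjective, $\varepsilon_C$ is an epimorphism; since it is a coalgebra map out of the cocommutative coalgebra $\susp C_0$, naturality of the symmetry gives $\tau\circ\Delta_C\circ\varepsilon_C = (\varepsilon_C\sm\varepsilon_C)\circ\tau\circ\Delta_{\susp C_0} = (\varepsilon_C\sm\varepsilon_C)\circ\Delta_{\susp C_0} = \Delta_C\circ\varepsilon_C$, and cancelling the epimorphism $\varepsilon_C$ yields $\tau\circ\Delta_C = \Delta_C$. Thus $C$ is cocommutative. For the EKMM model of $\bS$-modules, where the levelwise description is unavailable, I would instead transport the statement along the monoidal Quillen equivalences used in Proposition~\ref{prop.model.invariant}.
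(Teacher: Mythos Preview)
The paper does not prove this proposition; it is quoted from \cite{PS}, and the text only remarks that an earlier draft contained a proof of the special case of symmetric spectra over simplicial sets.  Your proposal is essentially an argument for that special case (more generally, for diagram spectra in which $\mathrm{Ev}_0$ is strong monoidal), so there is no in-paper proof to compare against.

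On its own merits your argument is close to correct but has one organizational slip in the inductive step.  A non-basepoint of $(C\sm C)_n=\bigvee_{p+q=n}(\Sigma_n)_+\sm_{\Sigma_p\times\Sigma_q}(C_p\sm C_q)$ has the form $[\gamma; a\sm b]$ with $\gamma\in\Sigma_n$, and applying the right counit together with the unit isomorphism $C\sm\bS\cong C$ recovers $c$ as $\gamma\cdot\sigma^{q_0}\big(a\sm\epsilon(b)\big)$, not as $\sigma^{q_0}\big(a\sm\epsilon(b)\big)$ itself.  Since $\im\sigma_{n-1}$ is only $\Sigma_{n-1}$-stable, not $\Sigma_n$-stable, you cannot conclude $c\in\im\sigma_{n-1}$ from this, so the reduction ``it suffices to prove $\sigma_{n-1}$ surjective'' does not go through as written.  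The repair is to bypass $\sigma_{n-1}$ and argue directly that $c\in\im\theta_n$: when $q_0\geq 1$ the inductive hypothesis at level $p_0<n$ writes $a=\theta_{p_0}(a_0\sm s)$, whence $\sigma^{q_0}\big(a\sm\epsilon(b)\big)=\theta_n\big(a_0\sm s\sm\epsilon(b)\big)$, and now the $\Sigma_n$-stability of $\im\theta_n$ (which you correctly note) absorbs the twist $\gamma$.  Your $q_0=0$ case already lands in $\im\theta_n$ by the same mechanism.  You anticipated that the equivariance bookkeeping would be the delicate point, and indeed it is.

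Your closing remark about the EKMM model is more problematic.  Transporting the statement along the monoidal Quillen equivalences of Proposition~\ref{prop.model.invariant} does not settle the question for \emph{strict} counital coalgebras in $\bS$-modules: the comparison functor from $\bS$-modules to a diagram category is a right Quillen adjoint, hence only lax (not oplax) monoidal, and need not carry strict coalgebras to strict coalgebras.  So a separate argument is required for that model; the diagram-spectrum proof you outline does not export to it automatically.
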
 

 In an earlier version of this paper, we proved the special case of this proposition for symmetric spectra over simplicial sets. Because of Proposition \ref{prop.cocom}, we focus on suspension spectra in the next section.
}

\subsection{$\coTHH$ of suspension spectra}
{The main statement of this section is the geometric identification of $\coTHH$ of a suspension spectrum as the suspension spectrum of the free loop space, see Theorem~\ref{thm.free.loop}. The proof of this statement is delayed to the following section. This main statement leads to a connection between $\coTHH$ and $\THH$ and another analogue of  ``agreement" in the sense of \cite{mccarthy}.   
Throughout this section by {\em spaces} we mean simplicial sets.}

{\begin{defn}\label{EMSS} For $X$ a Kan complex, consider any model of the loop-path space fibration, $\Omega X \to PX \to X.$
We say that a Kan complex $X$ is an {\em EMSS-good} space if $X$ is connected and $\pi_1 X$ acts nilpotently on $H_i(\Omega X; \bZ)$ for all $i$. 
\end{defn}}

This term refers to the fact that the Eilenberg-Moore spectral sequence for the loop-path space fibration converges strongly by~\cite{dwyer} for any EMMS-good space $X$.
Note that if $X$ is simply connected, then $X$ is certainly EMSS-good.

\begin{thm}\label{thm.free.loop}
If $X$ is an EMSS-good space, then the topological coHochschild homology of its suspension spectrum is equivalent to  the suspension spectrum of the free loop space: 
$$ \coTHH(\susp X)\simeq \susp\cL X.$$
\end{thm}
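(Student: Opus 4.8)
The plan is to realize the coHochschild complex of $\susp X$ as the levelwise suspension spectrum of the standard cosimplicial model of the free loop space, and then to commute $\susp$ past the derived totalization $\oT$, using the convergence input that the EMSS-good hypothesis supplies.

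\emph{Step 1: identify the cyclic cobar complex.} Writing $S^1_{\bullet}=\Delta[1]/\partial\Delta[1]$ for the simplicial circle, the set $S^1_n$ has $n+1$ elements, so that $\map(S^1_n,X)\cong X^{\times(n+1)}$, and the simplicial structure maps of $S^1_{\bullet}$ make $[n]\mapsto \map(S^1_{\bullet},X)$ into a cosimplicial space whose cofaces are the various diagonals together with the cyclic coface. Since $\susp$ is strong monoidal, $\susp\map(S^1_n,X)\cong \Sigma^{\infty}\big((X^{\times(n+1)})_+\big)\cong (\susp X)^{\sm(n+1)}=\cohoch(\susp X)^n$. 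Because the only co-unital coalgebra structure available on $\susp X$ is the one induced by the diagonal $\Delta_+$ (Proposition~\ref{prop.cocom}), the coface and codegeneracy operators of the cyclic cobar complex $\cohoch(\susp X)$ correspond exactly, under $\susp$, to those of $\map(S^1_{\bullet},X)$. This yields an isomorphism of cosimplicial spectra $\cohoch(\susp X)\cong \susp \map(S^1_{\bullet},X)$, in direct analogy with the identification of the dg coHochschild complex with a cosimplicial construction.

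\emph{Step 2: commute $\susp$ past $\oT$.} This is where the hypothesis is needed. By Proposition~\ref{prop.anderson}, if $X$ is EMSS-good then the $\bZ$-homology (Anderson) spectral sequence of the cosimplicial space $\map(S^1_{\bullet},X)$ strongly converges, to $H_*(\cL X;\bZ)$. This is precisely the hypothesis required to invoke Corollary~\ref{cor-conv}, which then gives
$$\oT\big(\susp \map(S^1_{\bullet},X)\big)\simeq \susp \oT\map(S^1_{\bullet},X).$$
\emph{Step 3: identify the base.} Since $X$ is a Kan complex, $\map(S^1_{\bullet},X)$ is Reedy fibrant, so its derived totalization computes $\map(S^1,X)=\cL X$; the disjoint basepoint added at each level passes through the totalization as a constant summand, so that $\oT\map(S^1_{\bullet},X)\simeq \cL X$ on the pointed level. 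Stringing the three steps together gives
$$\coTHH(\susp X)=\oT\cohoch(\susp X)\cong \oT\big(\susp\map(S^1_{\bullet},X)\big)\simeq \susp\oT\map(S^1_{\bullet},X)\simeq \susp\cL X.$$

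The main obstacle is the strong convergence of the homology spectral sequence, which is exactly what forces the restriction to EMSS-good spaces: without it the interchange of $\susp$ and $\oT$ fails, since $\oT$ is a homotopy inverse limit and suspension spectra do not commute with arbitrary homotopy limits. That difficulty is isolated and resolved by Proposition~\ref{prop.anderson} (via strong convergence of the Eilenberg--Moore spectral sequence for the path-loop fibration), so the remaining work in the present argument is the cosimplicial bookkeeping of Step~1 and the routine care needed to pass the disjoint basepoint and the Reedy fibrant replacement through the totalization in Step~3.
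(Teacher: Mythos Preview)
Your approach is exactly the paper's: identify $\cohoch(\susp X)$ with $\susp\map(S^1_{\bullet},X)$ (this is Lemma~\ref{lem.agree}), invoke Proposition~\ref{prop.anderson} for strong convergence, and then commute suspension past $\oT$.

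One technical point deserves care. Corollary~\ref{cor-conv} is stated for $\Sigma^{\infty}$, not for $\susp=\Sigma^{\infty}_{+}$: it yields $\oT(\Sigma^{\infty}Y^{\bullet})\simeq\Sigma^{\infty}\oT Y^{\bullet}$, not the plus-version you write in Step~2. The paper separates this cleanly by first invoking Proposition~\ref{prop.agree} to reduce to the unpointed statement, and only then applying Corollary~\ref{cor-conv}. Your Step~3 gestures at this (``the disjoint basepoint passes through the totalization as a constant summand''), and that intuition is right---it is exactly the content of Proposition~\ref{prop.agree}, which uses that finite products and coproducts agree in the stable homotopy category---but as written your Steps~2 and~3 are tangled: Step~2 already asserts the $\susp$-equivalence that Corollary~\ref{cor-conv} does not quite give, and Step~3 then redundantly re-handles the basepoint. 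Reordering to match the paper (first strip the plus via Proposition~\ref{prop.agree}, then apply Corollary~\ref{cor-conv}) makes the logic clean.
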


See also~\cite{Kuhn} and~\cite[2.22]{caryM} for earlier proofs of this statement {for simply connected spaces.}
{The proof of this theorem is given in Section~\ref{sec.cbl} and relies on the proofs of~\cite[4.1, 8.4]{Bousfield} and generalizations discussed in Appendix~\ref{app.A}.  It is likely that this can be further generalized to non-connected spaces $X,$ see for example the proofs of~\cite[3.1, 3.2]{shipley-thesis}.}

For $X$ simply connected, $\THH (\susp \Omega X) \simeq \susp \cL X$ by~\cite{bok-wald}, implying the following corollary.

\begin{cor}\label{cor.thh} Let $X$ be a simply connected Kan complex. There is a weak equivalence between the topological coHochschild homology of the suspension spectrum of $X$ and the topological Hochschild homology of the suspension spectrum of the based loops on $X$:
$$\coTHH(\susp X) \simeq \THH (\susp \Omega X).$$
\end{cor}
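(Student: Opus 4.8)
The plan is to chain together two equivalences that have already been established. The corollary asserts that $\coTHH(\susp X) \simeq \THH(\susp \Omega X)$ when $X$ is a simply connected Kan complex, and the strategy is simply to produce each side as an equivalent avatar of $\susp \cL X$, the suspension spectrum of the free loop space on $X$.

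First I would invoke Theorem \ref{thm.free.loop}. Since $X$ is simply connected, $X$ is in particular connected and $\pi_1 X$ is trivial, so it acts (nilpotently, indeed trivially) on $H_i(\Omega X; \bZ)$ for all $i$; hence $X$ is EMSS-good. Theorem \ref{thm.free.loop} then yields the equivalence
\begin{equation*}
\coTHH(\susp X) \simeq \susp \cL X.
\end{equation*}
Second, I would invoke the result of B\"okstedt and Waldhausen \cite{bok-wald}, which, as recalled in the excerpt immediately preceding the corollary statement, gives for simply connected $X$ an equivalence
\begin{equation*}
\THH(\susp \Omega X) \simeq \susp \cL X.
\end{equation*}
Composing the first equivalence with the inverse of the second produces the desired chain
\begin{equation*}
\coTHH(\susp X) \simeq \susp \cL X \simeq \THH(\susp \Omega X),
\end{equation*}
which completes the argument.

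Because the corollary is a formal consequence of two previously stated theorems, there is no genuine obstacle here: the real content lives entirely in Theorem \ref{thm.free.loop} (whose proof the excerpt defers to Section \ref{sec.cbl} and Appendix \ref{app.A}) and in the cited B\"okstedt--Waldhausen computation. The only point requiring a word of justification is the verification that simple connectivity implies the EMSS-good hypothesis, which is immediate. I would therefore expect the proof to be a one- or two-line deduction, exactly as the phrase ``implying the following corollary'' in the surrounding text anticipates.
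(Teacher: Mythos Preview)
Your proposal is correct and matches the paper's approach exactly: the paper does not even write out a proof, instead simply noting that the B\"okstedt--Waldhausen equivalence $\THH(\susp \Omega X)\simeq \susp\cL X$ for simply connected $X$, combined with Theorem~\ref{thm.free.loop}, ``impl[ies] the following corollary.'' Your only addition is the explicit (and immediate) check that simply connected spaces are EMSS-good, which the paper leaves implicit.
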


{
{
As in the differential graded context, there is also a categorified version of this result.  In~\cite[5.4]{HS16}, somewhat more generally reformulated below in Proposition~\ref{prop.mod}, we show that there is a Quillen equivalence between the categories of module spectra over $\susp \Omega X$ and of comodule spectra over $\susp X$.  Recall from~\cite[5.2]{HS16} that the category of comodules over $\sxp$ admits a model structure, denoted there by $(\comod_{\sxp})_{\pi_*^s}^{\mbox{st}}$ because it is the stabilization of the category of $X_{+}$-comodules with respect to $\pist$-equivalences.   Since this is the only model structure we consider for this category in this paper, we denote it simply by $\comod_{\sxp}$.  Weak equivalences in this structure induce stable equivalences on the underlying spectra by~\cite[5.2 (1)]{HS16}.

The first part of the following result is a simplified version of the statement in \cite[5.4]{HS16}, setting $\cE_* = \pist$. Note that, as above, choosing a base point for $X$ determines a coaugmentation map from the sphere spectrum $\bS \to \sxp$, which in turn determines a $\sxp$-comodule structure on $\bS$.  

\begin{prop}\label{prop.mod}\cite[5.4]{HS16} 
For $X$ a connected space, there is a Quillen equivalence
$$\hugeadjunction{\cat{Mod}_{\soxp}}{\cat{Comod}_{\sxp}}{L}{R}$$
such that $L(\soxp)$ is weakly equivalent to the sphere spectrum as a comodule, and $R(\sxp)$ is weakly equivalent to the sphere spectrum as a module.
\end{prop}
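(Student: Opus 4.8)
The plan is to build the adjunction geometrically from the based path fibration and then transport to the stable setting the strategy behind the dg Quillen equivalence of Proposition~\ref{prop:cobar}. Fix a model of the based path fibration $\Omega X \to PX \to X$ in which $PX$ is contractible and carries a free $\Omega X$-action. The suspension spectrum $\susp PX$ is then a module over $\soxp$, while the endpoint map $PX\to X$ composed with the diagonal equips $\susp PX$ with a $\sxp$-comodule structure; these structures sit on compatible sides, so $\susp PX$ is simultaneously an $\soxp$-module and an $\sxp$-comodule, and contractibility of $PX$ gives $\susp PX \simeq \bS$. Mirroring the dg formulas $R_C = -\otimes_{\Om C}\P_R C$ and $L_C = -\square_C\P_L C$, I would set
$$L(M) = M \sm_{\soxp} \susp PX \quad\text{and}\quad R(N) = N \square_{\sxp} \susp PX,$$
so that $L$ lands in $\sxp$-comodules and $R$ in $\soxp$-modules. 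Because $\susp PX \simeq \bS$ is dualizable, the cotensor $R$ is right adjoint to $L$, exactly as in the dualizable case of the braiding formalism recalled above.

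That $L\dashv R$ is a Quillen pair is then a routine verification against the right-induced model structure on $\cat{Mod}_{\soxp}$ and the stabilized $\pist$-model structure on $\cat{Comod}_{\sxp}$ of \cite[5.2]{HS16}: since $L$ is a smash against a cofibrant bimodule, it is direct to check that it preserves cofibrations and trivial cofibrations on generators. The two normalization statements then drop straight out of the construction:
$$L(\soxp) = \soxp\sm_{\soxp}\susp PX \cong \susp PX \simeq \bS$$
as a comodule, and $R(\sxp) = \sxp\square_{\sxp}\susp PX \cong \susp PX \simeq \bS$ as a module.

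For the equivalence I would argue exactly as in Proposition~\ref{prop:cobar}. Every object of $\cat{Comod}_{\sxp}$ is cofibrant and every object of $\cat{Mod}_{\soxp}$ is fibrant, so by \cite[1.3.13]{hovey} it suffices to show that the unit $M\to RLM$ and the counit $LRN\to N$ are weak equivalences for all $M$ and $N$. Unwinding both composites through $\susp PX$, the unit and counit are induced by the two bimodule equivalences
$$\susp PX \square_{\sxp}\susp PX \simeq \soxp \quad\text{and}\quad \susp PX \sm_{\soxp}\susp PX \simeq \sxp,$$
the stable analogues of Proposition~\ref{prop:SDR} (the two factors being the left and right based path constructions appearing there): the first is the suspension spectrum of the homotopy pullback $PX\times_X PX \simeq \Omega X$, and the second that of the two-sided bar construction $PX\times_{\Omega X}PX \simeq B\Omega X \simeq X$, valid since $X$ is connected.

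The main obstacle I anticipate is promoting these identities from the (co)free generators to arbitrary (co)modules. In the dg proof the analogous retracts of Proposition~\ref{prop:SDR} were honest strong deformation retracts, hence survived $-\otimes_{\Om C}\P_R C$ and $-\square_C\P_L C$ on the nose; in spectra the derived cotensor $-\square_{\sxp}\susp PX$ is a homotopy limit, computed as the totalization of a cobar tower, and its Eilenberg--Moore spectral sequence must be shown to converge before the equivalence on generators can be upgraded to all comodules. This convergence, for $X$ merely connected, is precisely what the $\pist$-stabilization of the comodule model structure of \cite{HS16} is engineered to provide---weak equivalences there being detected on underlying spectra by \cite[5.2]{HS16}---and establishing it, together with the fact that $L$ and $R$ preserve the relevant homotopy (co)limits, is the technical core of the argument, carried out in \cite[5.4]{HS16}.
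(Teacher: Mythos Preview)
Your left adjoint $L(M)=M\sm_{\soxp}\susp PX$ agrees with the paper's (which cites the explicit formula $-\sm_{\sgx}\susp\bP X$ from \cite[5.4]{HS16}), and your computation $L(\soxp)\simeq\susp PX\simeq\bS$ is exactly the paper's argument for that half of the normalization claim.

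The gap is your description of $R$. The assertion that $N\mapsto N\square_{\sxp}\susp PX$ is \emph{right} adjoint to $L$ is not justified by dualizability of $\susp PX$; the braiding formalism you invoke concerns adjunctions between two comodule categories, not between modules and comodules. More to the point, note that the spectral adjunction runs in the \emph{opposite} direction to the dg one of Proposition~\ref{prop:cobar}: there the cotensor $-\square_C\P_L C$ is the \emph{left} adjoint and the tensor $-\otimes_{\Om C}\P_R C$ the \emph{right} adjoint, whereas here the smash is the left adjoint. So ``mirroring the dg formulas'' does not produce the correct right adjoint. In the paper, $R$ is not a cotensor at all: it is obtained from \cite[5.4]{HS16} as the stabilization of a composite of three functors passing through retractive spaces over $X$ and $\bP X$ with $\G X$-action (cf.\ \cite[4.14]{HS16}). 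The paper's verification that $R(\sxp)\simeq\bS$ accordingly traces $\sxp$ through this retractive-space machinery rather than through a cotensor identity. Relatedly, your appeal to \cite[1.3.13]{hovey} assumes every object of $\cat{Comod}_{\sxp}$ is cofibrant, which holds in the dg left-induced structure but is not asserted for the stabilized model structure of \cite[5.2]{HS16}.

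In short, the paper does not reprove the Quillen equivalence; it invokes \cite[5.4]{HS16}, explains how to pass from the Kan loop group $\G X$ on a reduced simplicial set to an arbitrary $\Om X$ on a connected space, and then computes $L(\soxp)$ and $R(\sxp)$ using the explicit description of the adjoints from that reference. Your sketch of the Quillen-equivalence argument (unit/counit via spectral analogues of Proposition~\ref{prop:SDR}) is a reasonable heuristic, but the actual construction of $R$ and the model-categorical verifications live in \cite{HS16} and do not follow the cotensor template you propose.
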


\begin{proof}
The statement of \cite[5.4]{HS16} is formulated for the model of the loop space on $X$ given by the Kan loop group, $\G X$, for $X$ a reduced simplicial set.  

{Here instead we work with $\soxp$, 
where $\Om X$ denotes any model of the loop space of the fibrant replacement of $X$ (i.e., a Kan complex.)} Since $\soxp$ and $\sgx$ have the same homotopy type, their categories of modules are Quillen equivalent.  Moreover any connected simplicial set is weakly equivalent to a reduced simplicial set, 
and replacing $X$ by a weakly equivalent space induces Quillen equivalences on  the respective categories of comodule spectra (see~\cite[5.3]{HS16}). 

The original result is also formulated with respect to a chosen generalized homology theory $\cE_*$, which we fix here to be stable homotopy, $\cE_* = \pi_*^s$.  As in \cite[5.14]{HS16}, since levelwise $\pi_*^s$-equivalences are stable equivalences, one can show that the weak equivalences in this model structure on $\Mod_{\soxp}$  are the stable equivalences on the underlying spectra, i.e., this is the usual model structure on $\Mod_{\soxp}$. (The proof of~\cite[5.14]{HS16} treats the special case where $X$ is a point, but works verbatim for any $X$.)

Concerning the second part of the theorem, 
the left adjoint in~\cite[5.4]{HS16}, $- \sm_{\sgx} \susp \bP X$, takes $\sgx$ to $\susp \bP X$, which is weakly equivalent to $\bS$ since $\bP X$ is contractible. Hence,
 $L(\sgx)\simeq \bS.$ On the other hand, the functor from comodules to modules is the stabilization of the composite of three functors given in~\cite[4.14]{HS16}. By~\cite[3.11]{HS16}, since $X_+$ is the cofree $X_+$-comodule on $S^0$, the first of these functors takes $X_+$ to a retractive space $\Ret_X(S^0)$  over $X$ with total space $S^0 \times X$. The next functor is an equivalence of categories that takes $\Ret_X(S^0)$ to $\Ret_{\bP X}(S^0)$ with a trivial $\G X$-action, which is sent by the third functor to $S^0,$ the trivial, pointed $\G X$-module.  Upon stabilization, this computation implies that the Quillen equivalence on the spectral level sends the comodule $\sxp$ to the module $\bS,$ i.e., $R(\sxp)\simeq \bS.$
\end{proof}
}

As in Proposition~\ref{prop:cohoch-props} in the differential graded context, it follows from Corollary~\ref{cor.thh} and  Proposition~\ref{prop.mod} that topological coHochschild homology for suspension spectra satisfies ``agreement."   Here though, instead of considering finitely generated free modules, we consider the modules that are finitely built from the free module spectrum.  Recall that a subcategory of a triangulated category is called {\em thick} if it is closed under equivalences, triangles, and retracts. Here we also use the same terminology to refer to the underlying subcategory of the model category corresponding to the thick subcategory of the derived category. For example, for $R$ a ring spectrum, we consider $\Thick_R(R)$, the underlying spectral category associated to the thick subcategory generated by $R$.  In the literature, these modules are variously called ``perfect," ``compact," or ``finitely built from $R$."  

Since $L(\soxp) \simeq \bS$, and Quillen equivalences preserve thick subcategories, \cite[5.3, 5.9]{BM} implies the following.

\begin{lem}\label{lem.thh}
The Quillen equivalence in Proposition~\ref{prop.mod} induces a weak equivalence $\THH(\Thick_{\soxp}(\soxp)) \simeq \THH(\Thick_{\sxp}(\bS)).$
\end{lem}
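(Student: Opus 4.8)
The plan is to promote the Quillen equivalence of Proposition~\ref{prop.mod} to an equivalence of the pertinent spectral categories, and then to appeal to the invariance of $\THH$ under such equivalences.

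First I would observe that, since $\cat{Mod}_{\soxp}$ and $\cat{Comod}_{\sxp}$ are stable model categories, the Quillen equivalence $L\dashv R$ of Proposition~\ref{prop.mod} induces an exact (triangulated) equivalence of homotopy categories
$$\oL\colon \Ho(\cat{Mod}_{\soxp}) \xrightarrow{\simeq} \Ho(\cat{Comod}_{\sxp}).$$
The free module $\soxp$ is cofibrant, so $\oL(\soxp)=L(\soxp)$, which is weakly equivalent to $\bS$ by the last clause of Proposition~\ref{prop.mod}. An exact equivalence of triangulated categories carries the thick subcategory generated by an object to the thick subcategory generated by its image; hence $\oL$ restricts to an equivalence $\Ho\big(\Thick_{\soxp}(\soxp)\big) \xrightarrow{\simeq} \Ho\big(\Thick_{\sxp}(\bS)\big)$.

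Next I would upgrade this homotopy-level statement to a genuine equivalence of spectral categories. Restricting the spectrally enriched functors $L$ and $R$ to the fibrant--cofibrant objects of $\Thick_{\soxp}(\soxp)$ and $\Thick_{\sxp}(\bS)$ and passing to derived functors should yield a Dwyer--Kan equivalence of spectral categories $\Thick_{\soxp}(\soxp) \simeq \Thick_{\sxp}(\bS)$: it is essentially surjective on homotopy categories by the previous paragraph, and it induces weak equivalences on all mapping spectra because $L\dashv R$ is a spectral Quillen equivalence whose derived unit and counit are weak equivalences on the (co)fibrant objects in play. This is precisely the spectral analogue of the enriched argument carried out in Lemma~\ref{lem:q-e} in the differential graded setting.

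Finally, I would invoke the Morita-type invariance of topological Hochschild homology for spectral categories established in \cite[5.3, 5.9]{BM}, according to which a Dwyer--Kan equivalence of spectral categories induces a weak equivalence on $\THH$. Combined with the identification of spectral categories above, this produces the desired weak equivalence $\THH(\Thick_{\soxp}(\soxp)) \simeq \THH(\Thick_{\sxp}(\bS))$. The crux of the argument is the middle step: one must verify that the spectral enrichment is respected and that the derived comparison maps are weak equivalences on the mapping spectra of \emph{all} pairs of objects in the two thick subcategories, not merely that the underlying triangulated categories are equivalent; ensuring that the relevant objects are both fibrant and cofibrant is what makes the derived unit and counit honest weak equivalences.
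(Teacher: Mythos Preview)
Your proposal is correct and follows essentially the same route as the paper: use $L(\soxp)\simeq\bS$ from Proposition~\ref{prop.mod} to see that the Quillen equivalence matches the two thick subcategories, then invoke \cite[5.3, 5.9]{BM} for the invariance of $\THH$. The paper's proof is a one-line sketch citing exactly these ingredients; your version simply unpacks the spectral-category comparison (the analogue of Lemma~\ref{lem:q-e}) that the paper leaves implicit in its appeal to \cite{BM}.
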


It is a consequence of ~\cite[5.12]{BM} that for any ring spectrum $R$, 
$$\THH(R) \simeq \THH(\Thick_R(R)).$$  The next corollary follows immediately from 
this equivalence for  $R=\soxp$, together with Corollary~\ref{cor.thh} and Lemma~\ref{lem.thh}.

\begin{cor}\label{spec.agree}
Agreement holds for topological coHochschild homology of coalgebra spectra that are suspension spectra.  That is, for any  simply-connected Kan complex $X$, 
 $$\coTHH(\sxp) \simeq \THH (\Thick_{\sxp}(\bS)).$$
 \end{cor}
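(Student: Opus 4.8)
The plan is to assemble the statement from three equivalences that have already been established, chaining them through the topological Hochschild homology of $\soxp$. First I would invoke Corollary~\ref{cor.thh}, which gives $\coTHH(\sxp) \simeq \THH(\soxp)$ for any simply connected Kan complex $X$; note that simple connectivity guarantees that $X$ is EMSS-good, so that Theorem~\ref{thm.free.loop}, and hence Corollary~\ref{cor.thh}, does indeed apply. This reduces the problem to comparing $\THH$ of the ring spectrum $\soxp$ with $\THH$ of the indicated thick subcategory.

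Next I would apply the Blumberg--Mandell agreement theorem in the form cited from \cite[5.12]{BM}, namely that for any ring spectrum $R$ one has $\THH(R) \simeq \THH(\Thick_R(R))$. Taking $R = \soxp$ yields $\THH(\soxp) \simeq \THH(\Thick_{\soxp}(\soxp))$. Finally I would transport this across the Quillen equivalence of Proposition~\ref{prop.mod} by means of Lemma~\ref{lem.thh}, which records that this equivalence induces $\THH(\Thick_{\soxp}(\soxp)) \simeq \THH(\Thick_{\sxp}(\bS))$; the crucial input here is that $L(\soxp) \simeq \bS$ and that Quillen equivalences preserve thick subcategories, so that the generator $\soxp$ is carried to $\bS$.

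Concatenating the three steps gives
$$\coTHH(\sxp) \simeq \THH(\soxp) \simeq \THH(\Thick_{\soxp}(\soxp)) \simeq \THH(\Thick_{\sxp}(\bS)),$$
as desired. Since every ingredient is already in place, there is no serious obstacle: the only points requiring care are verifying that the simply-connected hypothesis feeds correctly into Corollary~\ref{cor.thh} and that the compact generator tracks correctly under the Quillen equivalence of Lemma~\ref{lem.thh}. The genuine mathematical content lies in the cited results --- Theorem~\ref{thm.free.loop}, the Blumberg--Mandell agreement theorem, and the Quillen equivalence of \cite{HS16} --- rather than in their combination here.
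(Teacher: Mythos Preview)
Your proposal is correct and follows exactly the paper's own argument: the paper states that the corollary follows immediately from the Blumberg--Mandell equivalence $\THH(R)\simeq\THH(\Thick_R(R))$ applied to $R=\soxp$, together with Corollary~\ref{cor.thh} and Lemma~\ref{lem.thh}, which is precisely the three-step chain you wrote out.
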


\begin{rmk}\label{rmk.compact}
Note that $\Thick_{\sxp}(\bS)$ is the subcategory of compact objects in the category of comodules over $\sxp$.  This follows from the Quillen equivalence in Proposition~\ref{prop.mod}, since $\Thick_{\soxp}(\soxp)$ is the subcategory of compact object for modules over $\soxp$.  Since  $R(\sxp)$ is weakly equivalent to $\bS$, where $R$ the right adjoint in Proposition~\ref{prop.mod}, it follows that $\sxp$ is a compact comodule over itself if and only if $\bS$ is a compact module over $\soxp$. In~\cite[5.6(2)]{DGI}, working over $H\bF_p$ instead of $\bS$, it is shown that there are examples where $H\bF_p$ is not compact as a module over 
$H\bF_p \sm \soxp$, e.g., when $X = \bC P^{\infty}$.  
\end{rmk}
}

\subsection{Cobar, Bar, and loop spaces}\label{sec.cbl}
In this section we consider the Cobar and Bar constructions on a suspension spectrum and prove Theorem~\ref{thm.free.loop} from the last section about $\coTHH$ of a suspension spectrum.
The proofs in this section rely on results about the convergence of spectral sequences for cosimplicial spaces that are established in Appendix~\ref{app.A}.

{  
Let $C$ be a $k$-coalgebra spectrum, $N$ a left $C$-comodule with coaction $\lambda: N \to C \sm N$, and $M$ a right $C$-comodule with coaction $\rho: M \to M \sm C$.

\begin{defn}
The {\em cobar complex} $\Omega^{\bullet}(M,C, N)$ is the cosimplicial spectrum with
$$  \Omega(M,C,N)^n = M \sm C^{\sm n}  \sm N$$
with coface operators  
$$
d^i = \left\{ \begin{array}{ll} \rho \sm \id_C^{\sm n} \sm \id_N & i=0 \\   \id_M \sm \id_C^{\sm i-1} \sm \Delta \sm \id_C^{\sm n-i}& 1\leq i \leq n \\ \id_M \sm \id_C^{\sm n} \sm \lambda & i=n+1  \end{array} \right.
$$
The codegeneracies involve the counit of $C$.
\end{defn}

If $C$ is a coaugmented $k$-coalgebra with coaugmentation $\eta: k \to C$, i.e., $\eta$ is a homomorphism of coalgebras such that $\epsilon \eta = \id_C$, then $\eta$ endows $k$ with the structure of a  $C$-bicomodule. In this case $\Omega(k,C,k) = \Omega^{\bullet}(C)$ is the {\em cobar complex of $C$.}  Its derived totalization is the {\em cobar construction on $C$}:
$$\Cobar (C) = \wb \Tot \Omega^{\bullet}(C).$$

}

The following cosimplicial resolution of the mapping space plays an important role in the statements below.  

\begin{defn}\label{defn:mapping-space} Let $W$ and $Z$ be pointed simplicial sets with $Z$ a Kan complex, and 
let $\map_*(W_{\bullet},Z)$ be the cosimplicial space with $\map_*(W,Z)^n$ equal to a product of copies of $Z$ indexed by the non-base point $n$-simplices in $W$, with cofaces and codegeneracies induced by those of $W$.  The pointed mapping space $Z^W$ agrees with the totalization of this cosimplicial space.  
\end{defn}

 If $W = S^1 = \Delta[1] / \del \Delta[1]$, then $\map_*(S^1_{\bullet}, Z)^n = Z^{\times n}$ for all $n$, and the totalization is $\Omega Z$, a simplicial model for the based loop space on $|Z|$.

For $X$ a pointed space, there is a canonical map $S^0 \to X$ that gives rise to a coaugmentation $\bS \to \sxp.$ Thus we can consider the cobar construction on $\sxp.$

\begin{prop}\label{prop-loops} If $X$ is pointed and an EMSS-good space,  then the cobar construction on the suspension spectrum of $X$ is weakly equivalent to the suspension spectrum of the pointed loops on $X$:
$$ \Cobar (\sxp) \simeq \soxp.$$
\end{prop}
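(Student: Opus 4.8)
The plan is to identify the cobar cosimplicial spectrum $\Omega^{\bullet}(\sxp)$ with the levelwise suspension spectrum of the cosimplicial space $\map_*(S^1_{\bullet}, X)$ from Definition~\ref{defn:mapping-space}, whose totalization already models the based loop space, and then to interchange the suspension spectrum functor with the derived totalization. First I would note that, since $\susp$ is strong symmetric monoidal, i.e. $\susp(Y\times Z)\iso \susp Y\sm \susp Z$, the $n$-th level of the cobar complex on $\sxp$ is
$$\Omega(\bS,\sxp,\bS)^{n}=(\sxp)^{\sm n}\iso \susp\big(X^{\times n}\big),$$
which coincides with $\susp$ applied to $\map_*(S^1_{\bullet},X)^{n}=X^{\times n}$.

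Next I would promote this levelwise identification to an isomorphism of cosimplicial spectra $\Omega^{\bullet}(\sxp)\iso \susp\map_*(S^1_{\bullet},X)$. This is a combinatorial check: the comultiplication $\Delta\colon \sxp\to \sxp\sm\sxp$ is $\susp$ of the diagonal $X\to X\times X$, while the $\sxp$-coactions on the bicomodule $\bS$ determined by the coaugmentation $\bS\to\sxp$ are $\susp$ of the basepoint inclusion $\ast\to X$. Under the identifications above, the cobar cofaces and codegeneracies---assembled from $\Delta$, these coactions, and the counit---become exactly $\susp$ of the cosimplicial structure maps of $\map_*(S^1_{\bullet},X)$, which are induced by the simplicial operators of $S^1=\Delta[1]/\partial\Delta[1]$. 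Since $X$ is a Kan complex, $\map_*(S^1_{\bullet},X)$ is Reedy fibrant, and by the remark following Definition~\ref{defn:mapping-space} its totalization satisfies $\oT\map_*(S^1_{\bullet},X)\simeq \Omega X$, hence $\susp\oT\map_*(S^1_{\bullet},X)\simeq \soxp$.

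It then remains to interchange $\susp$ with $\oT$, which is the heart of the argument. This interchange is supplied by Corollary~\ref{cor-conv}, applied to the cosimplicial space $Y^{\bullet}=\map_*(S^1_{\bullet},X)$: it holds as soon as the integral homology spectral sequence of $Y^{\bullet}$ converges strongly. Here that spectral sequence is the Eilenberg--Moore spectral sequence of the loop--path fibration $\Omega X\to PX\to X$, which converges strongly precisely because $X$ is EMSS-good, by Dwyer~\cite{dwyer} (see the discussion after Definition~\ref{EMSS}). Assembling the pieces,
$$\Cobar(\sxp)=\oT\,\Omega^{\bullet}(\sxp)\iso \oT\,\susp\map_*(S^1_{\bullet},X)\simeq \susp\,\oT\map_*(S^1_{\bullet},X)\simeq \soxp.$$
I expect the main obstacle to be exactly this interchange: $\susp$ does not commute with homotopy inverse limits in general, so the entire argument rests on the convergence input from the EMSS-good hypothesis and on verifying that the homology spectral sequence of $\map_*(S^1_{\bullet},X)$ is indeed the strongly convergent Eilenberg--Moore spectral sequence that Corollary~\ref{cor-conv} requires.
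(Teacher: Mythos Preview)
Your approach is essentially identical to the paper's: identify $\Omega^{\bullet}(\sxp)$ with $\susp\map_*(S^1_{\bullet},X)$ levelwise, invoke Dwyer's strong convergence of the Eilenberg--Moore spectral sequence under the EMSS-good hypothesis, and use this to commute suspension with derived totalization. The structure and the key inputs are the same.

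There is one small but genuine gap. Corollary~\ref{cor-conv} is stated for the \emph{reduced} suspension spectrum functor $\Sigma^{\infty}$, not for $\susp=\Sigma^{\infty}_{+}$. Your displayed step
\[
\oT\,\susp\map_*(S^1_{\bullet},X)\simeq \susp\,\oT\map_*(S^1_{\bullet},X)
\]
therefore does not follow directly from Corollary~\ref{cor-conv}. The paper handles this by first applying Corollary~\ref{cor-conv} to obtain $\oT\,\Sigma^{\infty}\map_*(S^1_{\bullet},X)\simeq \Sigma^{\infty}\,\oT\map_*(S^1_{\bullet},X)$, and then invoking Proposition~\ref{prop.agree}, which says precisely that once the interchange holds for $\Sigma^{\infty}$ it also holds for $\Sigma^{\infty}_{+}$ (the extra wedge summand $\bS$ corresponds to a constant cosimplicial spectrum, whose $\oT$ is again $\bS$). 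You should insert this step; once you do, your argument matches the paper's exactly.
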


\begin{proof}
If we add a disjoint base point, then $\map_*(S^1_{\bullet}, X)_+$ has cosimplicial level $n$ given by $(X^{\times n})_+ \iso (X_+)^{\sm n}$. Applying the suspension spectrum functor, we see that $\Sigma^{\infty} \map_*(S^1_{\bullet}, X)_+$ agrees with the cobar complex $\Omega^{\bullet} (\sxp) $.  

{
By~\cite{dwyer}, if  $X$ is EMSS-good, the Eilenberg-Moore spectral sequence converges for ordinary homology with integral coefficients. This Eilenberg-Moore spectral sequence is the homology spectral sequence for the cosimplicial space $\map_*(S^1_{\bullet}, X).$ By Corollary~\ref{cor-conv} the strong convergence of this spectral sequence implies that the total complex commutes with the suspension spectrum functor, i.e., 
$$\oT \Sigma^{\infty} \map_*(S^1_{\bullet}, X)\simeq \Sigma^{\infty} \oT \map_*(S^1_{\bullet}, X).$$ 
By Proposition~\ref{prop.agree}, we can add disjoint base points to this equivalence, obtaining that 
$$\oT \Sigma^{\infty} \map_*(S^1_{\bullet}, X)_+\simeq \Sigma^{\infty} \oT \map_*(S^1_{\bullet}, X)_+.$$ 
Since $\oT \map_*(S^1_{\bullet}, X)_+ \simeq \Omega X_+$, we can conclude.}  
\end{proof}

{A dual to Proposition \ref{prop-loops}, with a considerably simpler proof, holds as well.  The statement of the dual is formulated in terms of the \emph{Kan classifying space functor}, $\overline W\colon \cat{sGp} \to \cat {sSet}_{0}$, from simplicial groups to reduced simplicial sets.  A detailed definition of this functor can be found in \cite{cegarra-remedios}, where it is also shown that $\overline W$ factors as the composite $\operatorname{codiag} \circ N$, where $N\colon \cat{sGp} \to \cat {ssSet}$ is the levelwise nerve functor from simplicial groups to bisimplicial sets, and $\operatorname{codiag}: \cat {ssSet} \to \cat{sSet}$ is the \emph{Artin-Mazur codiagonalization functor} \cite{artin-mazur}.  The functor $\operatorname{codiag}$ is often called \emph{Artin-Mazur totalization} and denoted $\Tot$, which we avoid, due to the risk of confusion with the other notion of totalization that we employ in this article.   As we do not make any computations based on the explicit and somewhat involved formula for $\operatorname{codiag}$, we do not recall it here.

We also consider the \emph{bar construction functor}, denoted $\bBar$, which associates to any associative ring spectrum $R$ a spectrum $\bBar R=|B_{\bullet }R|$, where $|-|$ denotes geometric realization, and $\bBar_{\bullet}R$ is the simplicial spectrum with $\bBar_{n}R=R^{\wedge n}$, face maps built from the multiplication map of $R$, and degeneracies from its unit map.

\begin{prop}\label{prop-bar} For any simplicial group  $G$, the bar construction on the ring spectrum $\susp G$ is naturally weak equivalent to the suspension spectrum of the bar construction of $G$, i.e., 
$$ \bBar (\Sigma^{\infty} _+ G) \simeq \Sigma^{\infty}_+\overline{W} G.$$
\end{prop}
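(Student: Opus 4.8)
The plan is to identify $\bBar(\susp G)$ with $\susp$ applied to the diagonal of the bisimplicial nerve of $G$, and then to pass from the diagonal to the codiagonal $\operatorname{codiag}$, which by definition computes $\overline W G$.

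First I would exploit that the suspension spectrum functor $\susp$ is strong monoidal, so that in each bar degree $(\susp G)^{\wedge n}\cong \susp(G^{\times n})$, naturally in $n$. Tracking the simplicial operators of $\bBar_{\bullet}(\susp G)$ --- the faces built from the multiplication $G\times G\to G$ and the degeneracies from the unit $\ast\to G$ --- this identifies the simplicial spectrum $\bBar_{\bullet}(\susp G)$ with $\susp$ applied levelwise to the simplicial object $[p]\mapsto G^{\times p}$ of $\cat{sSet}$. This simplicial object is exactly the levelwise nerve $N G$, the bisimplicial set with $(NG)_{p,q}=(G_{q})^{\times p}$, regarded as a simplicial object in $\cat{sSet}$ along the nerve direction $p$.

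Next I would commute $\susp$ past geometric realization: since $\susp$ is a left adjoint, it preserves the colimit computing $|-|$, so
\[
\bBar(\susp G)=\big|\,[p]\mapsto \susp(G^{\times p})\,\big|\cong \susp\,\big|\,[p]\mapsto G^{\times p}\,\big|.
\]
The realization of a simplicial object in simplicial sets is naturally isomorphic to the diagonal of the corresponding bisimplicial set, via the standard coend computation $\int^{[p]}X_{p,\bullet}\times\Delta[p]\cong \operatorname{diag}X$; hence $\big|[p]\mapsto G^{\times p}\big|\cong \operatorname{diag}(NG)$ and $\bBar(\susp G)\cong \susp\,\operatorname{diag}(NG)$.

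Finally, since $\overline W G=\operatorname{codiag}(NG)$, I would invoke the Artin--Mazur comparison, which supplies a natural weak equivalence $\operatorname{diag}(X)\xrightarrow{\simeq}\operatorname{codiag}(X)$ for every bisimplicial set $X$ \cite{artin-mazur,cegarra-remedios}. As $\susp$ is left Quillen and every simplicial set is cofibrant, it carries this weak equivalence to a weak equivalence $\susp\,\operatorname{diag}(NG)\simeq \susp\,\operatorname{codiag}(NG)=\susp\,\overline W G$, yielding the claim; naturality in $G$ is inherited from the naturality of each step. The only delicate point is the bookkeeping of the two simplicial directions in the first two steps: one must be sure that the bar direction of $\bBar_{\bullet}$ matches the nerve direction of $NG$, so that the realization genuinely produces the diagonal rather than some other combination. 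Once this is pinned down the argument is a concatenation of standard facts, which is why it is considerably simpler than the proof of Proposition~\ref{prop-loops}.
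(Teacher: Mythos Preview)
Your proof is correct and follows essentially the same route as the paper's own argument: identify $\bBar_{\bullet}(\susp G)$ with $\susp$ applied levelwise to $NG$, commute $\susp$ with realization to land on $\susp\operatorname{diag}(NG)$, and then invoke the Cegarra--Remedios natural weak equivalence $\operatorname{diag}\simeq\operatorname{codiag}$ together with $\overline W G=\operatorname{codiag}(NG)$. The paper records the same chain of isomorphisms and weak equivalences, merely declaring the two intermediate isomorphisms ``straightforward computations,'' whereas you spell out the reasons (strong monoidality of $\susp$, preservation of realizations by a left adjoint, and realization $\cong$ diagonal for bisimplicial sets).
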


\begin{proof}  Cegarra and Remedios proved in \cite{cegarra-remedios} that the obvious natural transformation from the diagonalization functor $\operatorname{diag}: \cat {ssSet} \to \cat{sSet}$ to $\operatorname{codiag}$ is in fact a natural weak equivalence.  It follows that for any simplicial group $G$ there is a sequence of natural weak equivalences and isomorphisms
$$\Sigma^{\infty}_+\overline{W} G \simeq \Sigma^{\infty}_+ \operatorname{diag} NG\cong |\Sigma^{\infty}_+ NG| \cong | \bBar_{\bullet} \Sigma^{\infty}_+ G|= \bBar (\Sigma^{\infty} _+ G),$$
where straightforward computations suffice to establish the two isomorphisms.
\end{proof}
}

The next lemma is the first step in the proof of Theorem~\ref{thm.free.loop}. Note that we consider unpointed mapping spaces here.

\begin{lem}\label{lem.agree} For any space $X$, there is an isomorphism of cosimplicial spectra
$$\coTHH^{\bullet}(\sxp)\cong \susp \map(S^1_{\bullet},X).$$
\end{lem}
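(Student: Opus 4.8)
The plan is to prove the isomorphism levelwise and then check compatibility with the cosimplicial structure maps, exploiting the fact that everything on the left is the image under the strong symmetric monoidal suspension spectrum functor $\susp=\Sigma^{\infty}_{+}$ of a purely space-level construction. First I would record that, because $\susp$ is strong symmetric monoidal --- sending a product $X\times Y$ to $\sxp\sm\Sigma^{\infty}_{+}Y$ via the natural isomorphism $(X\times Y)_{+}\cong X_{+}\sm Y_{+}$ --- the comultiplication and counit of the coalgebra $\sxp$ are obtained by applying $\susp$ to the diagonal $X\to X\times X$ and the projection $X\to\ast$ respectively (this is the only co-unital structure available, cf.\ Proposition~\ref{prop.cocom}). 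Consequently
$$\coTHH^{\bullet}(\sxp)^{n}=(\sxp)^{\sm(n+1)}\cong \susp\big(X^{\times(n+1)}\big),$$
and, since the simplicial circle $S^1=\Delta[1]/\partial\Delta[1]$ has exactly $n+1$ simplices in degree $n$, the unpointed resolution of Definition~\ref{defn:mapping-space} satisfies $\map(S^1_{\bullet},X)^{n}=X^{\times(n+1)}$. Thus both cosimplicial spectra agree levelwise with $\susp X^{\times(n+1)}$.

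Next I would identify the structure maps. Unwinding the coface operators of $\widehat\cH(\sxp)$ when the comultiplication is $\susp$ of the diagonal shows that $d^{i}$ is $\susp$ applied to the map $X^{\times(n+1)}\to X^{\times(n+2)}$ that duplicates the $i$-th coordinate for $0\le i\le n$, while the cyclic coface $d^{n+1}=\tau\circ(\lambda\sm\id_C^{\sm n})$ becomes $\susp$ of the map that appends a copy of the first coordinate in last position; the codegeneracies, built from the counit, become $\susp$ of the coordinate-forgetting projections. On the other side, the cofaces and codegeneracies of $\map(S^1_{\bullet},X)$ are induced contravariantly by the face and degeneracy maps of $S^1$. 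It therefore suffices to fix a bijection between the coordinates of $X^{\times(n+1)}$ and the simplices of $S^1_{n}$, natural in $[n]$, intertwining the duplication/append/forget maps above with the maps induced by the simplicial operators of $S^1$.

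The remaining, and main, step is this combinatorial verification. Writing the $n$-simplices of $\Delta[1]$ as the monotone maps $[n]\to[1]$ indexed by the position of their jump from $0$ to $1$, the two constant simplices are identified to the basepoint in $S^1$, leaving exactly $n+1$ classes which I would match with the coordinates $0,\dots,n$ of $X^{\times(n+1)}$, the first coordinate corresponding to the collapsed basepoint. A direct check that each face map $d_i\colon S^1_{n+1}\to S^1_{n}$ is injective except for a single collapse of two adjacent jump-positions --- and hence induces precisely the duplication of the $i$-th coordinate, with the wrap-around case $i=n+1$ accounting for the cyclic twist $\tau$ --- then completes the identification, and similarly for degeneracies. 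The subtle point, which I expect to be the main obstacle, is precisely this boundary/wrap-around bookkeeping: one must check that the identifications forced by the collapse $\Delta[1]/\partial\Delta[1]$ are globally consistent and that the last coface $d^{n+1}$, whose definition involves the cyclic permutation $\tau$, matches the last face of the simplicial circle rather than introducing a spurious reindexing. Since the lemma asserts a strict isomorphism of cosimplicial objects, no convergence or fibrancy hypotheses enter, so the entire argument is formal once this matching is in place.
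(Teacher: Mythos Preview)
Your proposal is correct and follows essentially the same approach as the paper's proof: reduce to the space level via the strong monoidality of $\susp$ (the paper phrases this as ``both are cosimplicial suspension spectra, so it is enough to establish the isomorphism on the 0th space level''), identify level $n$ on each side with $X^{\times(n+1)}$, and then observe that the cofaces are diagonals on the appropriate factor (with the cyclic twist for $d^{n+1}$) while the codegeneracies are projections. The paper dispatches the combinatorial matching in a single sentence, whereas you spell out the bookkeeping with the jump-indexing of simplices of $\Delta[1]/\partial\Delta[1]$; this added detail is correct and your indexing does make the wrap-around case $d^{n+1}$ transparent, but it is the same argument.
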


\begin{proof} Since these are both cosimplicial suspension spectra, it is enough to establish the isomorphism on the 0th space level. The $0$th space of $\coTHH(\sxp)$ has $n$th cosimplicial level  $(X_+)^{\sm (n + 1)} \cong (X^{\times (n + 1)})_+$, which agrees with $\map(S^1_{\bullet}, X)^n_+$. In both cases, the coface maps are induced by diagonals on the appropriate factor (with one extra twist for $d^{n+1}$), while  the codegeneracy maps are projections onto the appropriate factors.
\end{proof}

\begin{proof}[Proof of Theorem~\ref{thm.free.loop}]
Proposition~\ref{prop.agree}  implies that it is sufficient to prove the statement with disjoint base points removed, so
 it suffices to show that $$\oT \Sigma^{\infty} \map(S^1_{\bullet}, X) \simeq \Sigma^{\infty} \oT \map(S^1_{\bullet}, X).$$ 
 By Corollary~\ref{cor-conv}, it is enough to know that the Anderson spectral sequence for homology with coefficients in $\bZ$  for the cosimplicial space $ \map(S^1_{\bullet}, X)$ strongly converges. By Proposition~\ref{prop.anderson} this holds for $X$ an EMSS-good space, as required in the hypotheses here.
 \end{proof}

\appendix
\section{Total complexes of cosimplicial suspension spectra}\label{app.A}
In this section we prove several useful results concerning cosimplicial spectra, their associated spectral sequences, and commuting certain homotopy limits and colimits.  The most general statement,
Proposition~\ref{prop-conv-D}, gives conditions in terms of convergence of the associated spectral sequence for commuting the derived total complex (a homotopy limit) with smashing with a spectrum (a homotopy colimit). In this paper, we need only the suspension spectrum case, stated in Corollary~\ref{cor-conv}.  The convergence conditions in the hypothesis here are verified in Proposition~\ref{prop.anderson} for the Anderson spectral sequence for the cosimplicial space $ \map(S^1_{\bullet}, X)$. These statements are then used in the proofs of Theorem~\ref{thm.free.loop} and Proposition~\ref{prop-loops} above.  Proposition~\ref{prop.agree} shows that a variation of Corollary~\ref{cor-conv} holds even after adding base points.

\begin{prop}\label{prop-conv-D}
If the spectral sequence associated to the cosimplicial space $Y^{\bullet}$ for the generalized homology theory $D_*$  converges strongly, then $$\oT (D \sm Y^{\bullet}) \simeq D \sm \oT Y^{\bullet}.$$  
\end{prop}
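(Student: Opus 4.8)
The plan is to construct the evident natural comparison map and then prove it is an equivalence by comparing two spectral sequences that share a common $E_{2}$-page. First I would build the map
$$\phi\colon D \sm \oT Y^{\bullet} \longrightarrow \oT(D \sm Y^{\bullet})$$
from the universal property of the homotopy limit: the canonical maps $\oT Y^{\bullet} \to \Tot_{s}Y^{\bullet} \to Y^{s}$, after applying $\Sigma^{\infty}$ and smashing with $D$, are compatible with the cosimplicial structure maps and hence assemble into a map from $D \sm \oT Y^{\bullet}$ to $\oT(D\sm Y^{\bullet}) = \operatorname{holim}_{s}\Tot_{s}(D\sm Y^{\bullet})$. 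Since smashing with a fixed spectrum $D$ is exact and therefore commutes with the \emph{finite} homotopy limits $\Tot_{s}$, the target tower may be identified with $\{D \sm \Tot_{s}(\Sigma^{\infty}Y^{\bullet})\}$.

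The two sides carry spectral sequences with a common $E_{2}$-page. On the target, the homotopy spectral sequence of the tower $\{\Tot_{s}(D\sm Y^{\bullet})\}$ has
$$E_{2}^{s,t} = \pi^{s}\big(\pi_{t}(D\sm Y^{\bullet})\big) = \pi^{s}\big(D_{t}Y^{\bullet}\big)$$
and is conditionally convergent to $\pi_{t-s}\big(\oT(D\sm Y^{\bullet})\big)$. On the source, the relevant spectral sequence is precisely the $D_{*}$-homology spectral sequence of the cosimplicial space $Y^{\bullet}$ hypothesized to converge strongly; it has the same $E_{2}$-page $\pi^{s}(D_{t}Y^{\bullet})$ and abuts to $D_{t-s}(\oT Y^{\bullet}) = \pi_{t-s}(D\sm \oT Y^{\bullet})$. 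I would then check that $\phi$ is filtration-preserving and induces the identity on these common $E_{2}$-pages, hence an isomorphism on every $E_{r}$-page.

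Having matched the $E_{r}$-pages, I would transfer strong convergence across $\phi$. Since the derived $E_{\infty}$-term $RE_{\infty}$ depends only on the tower of $E_{r}$-pages, and the source spectral sequence converges strongly (so $RE_{\infty}=0$), the target spectral sequence — which is conditionally convergent and has the same $RE_{\infty}=0$ — converges strongly as well, by the standard criterion for conditionally convergent spectral sequences of towers. Both abutments then carry complete, exhaustive, Hausdorff filtrations whose associated graded objects are identified by $\phi$, so $\phi$ induces an isomorphism on all homotopy groups and is therefore a weak equivalence.

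The main obstacle will be the bookkeeping needed to identify the source spectral sequence — defined as the $D_{*}$-homology spectral sequence of $Y^{\bullet}$, i.e.\ the exact-couple spectral sequence obtained by applying $D_{*}$ to the tower $\{\Tot_{s}Y^{\bullet}\}$ of spaces — with the homotopy spectral sequence of the tower of spectra $\{D\sm \Tot_{s}Y^{\bullet}\}$, compatibly with $\phi$. Concretely, this requires controlling the Milnor $\lim{}^{1}$-sequence relating $D_{*}(\oT Y^{\bullet})$ to $\lim_{s}D_{*}(\Tot_{s}Y^{\bullet})$ and verifying that the strong-convergence hypothesis forces the relevant $\lim{}^{1}$-terms to vanish, so that $D_{*}$ genuinely commutes with the homotopy limit defining $\oT Y^{\bullet}$. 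Once this identification is secured, the comparison argument above applies and yields the desired equivalence.
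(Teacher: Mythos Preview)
Your approach is essentially the same as the paper's, but you are doing by hand what the paper outsources to Bousfield. The paper observes that Rector's $D_*$-homology spectral sequence of $Y^{\bullet}$ is \emph{defined} in \cite[2.4]{Bousfield} as the homotopy spectral sequence of the cosimplicial spectrum $D\sm Y^{\bullet}$, so your ``two spectral sequences with a common $E_2$-page'' are literally one spectral sequence; the ``bookkeeping'' you flag as the main obstacle is precisely this identification, already recorded in the literature. The paper then quotes \cite[2.5]{Bousfield}, which states that strong convergence of this one spectral sequence means its abutment is $D_*(\oT Y^{\bullet})$, yielding $\pi_*\oT(D\sm Y^{\bullet})\cong D_*(\oT Y^{\bullet})$ in one line. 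Your construction of the explicit comparison map $\phi$ is a genuine improvement over the paper's argument, which only produces an abstract isomorphism of homotopy groups rather than a map of spectra inducing it; if you want the conclusion as a weak equivalence of spectra (as stated), your $\phi$ is needed.
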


Bousfield, in~\cite{Bousfield}, shows that the following conditions imply strong convergence for such spectral sequences.

\begin{prop}\cite[3.1]{Bousfield}\label{prop-Bousfield}  Let $R$ be a ring such that $R \subset \bQ$ or $R = \bZ / p$ for $p$ a prime.  If $Y^{\bullet}$  is a cosimplicial space such that the associated homology spectral sequence  with coefficients in $R$ strongly converges to $H_*(\oT Y^{\bullet}; R)$, then for each connective spectrum $D$ with $R$-nilpotent coefficient groups $\pi_i D$, the spectral sequence associated to $Y^\bullet$ for the generalized homology theory $D_*$ converges strongly to $D_*(\oT Y^{\bullet})$.
\end{prop}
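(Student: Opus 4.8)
This statement is Theorem~3.1 of \cite{Bousfield}, so I take its proof as given; nonetheless I sketch the mechanism I would use. The plan is to reduce the strong convergence of the $D_*$-spectral sequence to the hypothesized strong convergence of the ordinary $R$-homology spectral sequence in two stages: first replacing the arbitrary coefficient spectrum $D$ by Eilenberg--MacLane spectra $HM$ with $M$ an $R$-module, and then comparing homology with coefficients in $M$ to homology with coefficients in $R$ itself. The base case $D = HR$ of the first reduction is precisely the hypothesis, since the $R$-homology spectral sequence is the $HR$-homology spectral sequence.

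The key structural tool is that strong convergence of the spectral sequence associated to $Y^{\bullet}$ is a ``two-out-of-three'' property along cofiber sequences of coefficient spectra. Given a cofiber sequence $D' \to D \to D''$, smashing with the tower $\{\Tot_{s} Y^{\bullet}\}$ of partial totalizations of $Y^{\bullet}$ yields levelwise cofiber sequences of spectra and hence a long exact sequence relating the three associated spectral sequences. Since these half-plane spectral sequences are automatically conditionally convergent, I would invoke Boardman's criteria together with the long exact sequence to conclude that strong convergence for $D'$ and $D''$ forces it for $D$. Because $D$ is connective with $R$-nilpotent homotopy groups, it is the top of a Postnikov-type tower whose successive layers are Eilenberg--MacLane spectra $HM$ with $M$ an $R$-module, the $R$-nilpotence of the $\pi_i D$ being exactly what keeps this tower and the resulting derived limits under control. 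Applying the two-out-of-three principle inductively up this tower reduces the problem to the case $D = HM$.

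It then remains to treat $D = HM$, i.e., ordinary homology with coefficients in an $R$-module $M$. Here I would exploit that $R$ is either a subring of $\bQ$, hence a principal ideal domain, or the field $\bZ/p$. In either case the universal coefficient theorem supplies a natural short exact sequence expressing $H_*(W;M)$ in terms of $H_*(W;R)$ and $\operatorname{Tor}^{R}\big(H_{*-1}(W;R),M\big)$, natural in $W$ and therefore compatible with the filtration of the totalization tower. Thus the $M$-homology spectral sequence is assembled from the $R$-homology spectral sequence by a natural extension, and the hypothesized strong convergence of the latter propagates to the former.

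The main obstacle, as always for convergence statements of this type, is the control of derived limits: upgrading the automatic conditional convergence to genuine strong convergence requires showing that the relevant $RE_\infty$ and $\lim^1$ obstruction terms vanish at each stage of the inductive extension argument. This is precisely where the $R$-nilpotence hypothesis is indispensable, since it bounds the complexity of the tower built from $HR$ and guarantees that the derived-limit terms behave. I would organize this bookkeeping in Boardman's formalism of conditionally convergent spectral sequences, following the analysis in \cite{Bousfield}.
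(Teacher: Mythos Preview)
The paper does not give its own proof of this proposition; it merely states it as a citation of \cite[3.1]{Bousfield} and uses it as input to Corollary~\ref{cor-conv}. You correctly identify this at the outset, so there is nothing in the paper to compare your sketch against.

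Your outline is a reasonable account of how one could prove such a statement, and it is in the spirit of Bousfield's argument: reduce from a general connective $D$ with $R$-nilpotent homotopy to Eilenberg--MacLane layers, and then from $HM$ to $HR$ via universal coefficients. One small imprecision: the Postnikov layers of $D$ are $H(\pi_i D)$ with $\pi_i D$ merely $R$-nilpotent, not already an $R$-module, so each layer must itself be refined by the finite $R$-nilpotent filtration of $\pi_i D$ before you land in the $HM$ case with $M$ an $R$-module. You gesture at this when you say the $R$-nilpotence ``keeps this tower under control,'' but it is worth making explicit that this is where the hypothesis enters, not only in controlling $\lim^1$ terms. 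With that caveat, the sketch is sound, though since the paper simply invokes \cite{Bousfield} there is no discrepancy to report.
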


Since abelian groups are $\bZ$-nilpotent, the following corollary of Propositions~\ref{prop-conv-D} and~\ref{prop-Bousfield} holds. 

\begin{cor}\label{cor-conv}
If the integral  spectral sequence for the cosimplicial space $Y^{\bullet}$  strongly converges, then
$$\oT (\Sigma^{\infty}Y^{\bullet}) \simeq \Sigma^{\infty} \oT Y^{\bullet}.$$  
\end{cor}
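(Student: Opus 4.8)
The plan is to apply Proposition~\ref{prop-conv-D} with the single choice $D = \bS$, the sphere spectrum. The first observation is that smashing the sphere spectrum with a pointed space recovers its suspension spectrum, so that levelwise $\bS \sm Y^{\bullet} = \Sigma^{\infty} Y^{\bullet}$ and likewise $\bS \sm \oT Y^{\bullet} = \Sigma^{\infty} \oT Y^{\bullet}$. Under this identification the conclusion $\oT(D \sm Y^{\bullet}) \simeq D \sm \oT Y^{\bullet}$ of Proposition~\ref{prop-conv-D} is precisely the asserted equivalence $\oT(\Sigma^{\infty} Y^{\bullet}) \simeq \Sigma^{\infty} \oT Y^{\bullet}$. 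Everything therefore reduces to verifying the hypothesis of Proposition~\ref{prop-conv-D} for $D = \bS$, namely that the spectral sequence associated to $Y^{\bullet}$ for the generalized homology theory $\bS_* = \pist$ converges strongly.

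To supply this strong convergence, I would invoke Proposition~\ref{prop-Bousfield} with $R = \bZ$, which is admissible since $\bZ \subset \bQ$, and again with $D = \bS$. The hypothesis of the present corollary is exactly that the homology spectral sequence of $Y^{\bullet}$ with coefficients in $R = \bZ$ strongly converges to $H_*(\oT Y^{\bullet}; \bZ)$, so the first condition of Proposition~\ref{prop-Bousfield} is met verbatim. It then remains only to check that the chosen spectrum $D = \bS$ meets the two standing requirements: connectivity and $R$-nilpotence of its coefficient groups. The sphere spectrum is connective, and its coefficient groups are the stable stems $\pi_i \bS = \pist$, which are abelian; since every abelian group is $\bZ$-nilpotent, the nilpotence condition holds automatically.

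With both hypotheses of Proposition~\ref{prop-Bousfield} confirmed, its conclusion yields the strong convergence of the $\bS_* = \pist$-spectral sequence to $\bS_*(\oT Y^{\bullet})$. Feeding this strong convergence into Proposition~\ref{prop-conv-D} then delivers the desired equivalence. There is no genuine obstacle in this argument, which is a direct specialization of the two preceding propositions; the only points meriting a moment's care are the identification $\bS \sm Y = \Sigma^{\infty} Y$ and the matching of the integral homology spectral sequence hypothesized here with the $R = \bZ$ spectral sequence appearing as input to Proposition~\ref{prop-Bousfield}, both of which are immediate.
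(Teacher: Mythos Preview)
Your proposal is correct and follows exactly the paper's approach: the paper's proof is the single sentence ``Since abelian groups are $\bZ$-nilpotent, the following corollary of Propositions~\ref{prop-conv-D} and~\ref{prop-Bousfield} holds,'' and you have simply unpacked this by taking $D=\bS$, $R=\bZ$, and checking the connectivity and nilpotence hypotheses.
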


{
\begin{proof}[Proof of Proposition~\ref{prop-conv-D}]
Recall {from~\cite[X.6.1]{bousfield.kan}} that there is a homotopy spectral sequence for any cosimplicial space $Y^{\bullet}$ that converges to the homotopy of $\oT Y^{\bullet}$ under mild conditions.  This spectral sequence arises from the tower of fibrations given by $\{\Tot^s(Y^{\bullet})\}$ and has $E_2$-term given by $\pi^s\pi_t Y^{\bullet}.$

Rector's spectral sequence for computing the $D_*$-homology of a cosimplicial space is considered in~\cite[2.4]{Bousfield}, where it is constructed as the homotopy spectral sequence for the cosimplicial spectrum given by $D \sm Y^{\bullet}$.  The $E_2$-term is therefore given by $\pi^s\pi_t (D \sm Y^{\bullet}) \cong \pi^s D_t(Y^{\bullet}),$ and it abuts to $\pi_* \oT (D \sm Y^{\bullet}).$  By~\cite[2.5]{Bousfield}, strong convergence for this spectral sequence implies that $D_*(\oT Y^{\bullet})$ is isomorphic to $\pi_* \oT (D \sm Y^{\bullet}).$   Strong convergence for the homology spectral sequence for $D_*$ thus implies the statement in the proposition.  
\end{proof}
}

Our next goal is to prove the following strong convergence result.

\begin{prop}\label{prop.anderson} For $X$ an EMSS-good space,
the Anderson spectral sequence for homology with coefficients in $\bZ$  for the cosimplicial space $ \map(S^1_{\bullet}, X)$ strongly converges to $H_*(\cL X; \bZ)$.  
 \end{prop}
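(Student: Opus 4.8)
The plan is to deduce strong convergence of the Anderson spectral sequence for $\map(S^1_\bullet, X)$ from the analogous convergence result for the \emph{based} loop space, by exploiting the free loop fibration at the cosimplicial level. Since $\Tot \map(S^1_\bullet, X) \simeq \cL X$, the integral homology spectral sequence of this cosimplicial space is precisely the Anderson spectral sequence abutting to $H_*(\cL X; \bZ)$, so the only issue is strong convergence. The structural input is that the inclusion of the basepoint $\ast \hookrightarrow S^1$ induces a map of cosimplicial spaces
$$\map_*(S^1_\bullet, X) \longrightarrow \map(S^1_\bullet, X) \xrightarrow{\,e\,} X,$$
where the right-hand target is the constant cosimplicial space on $X$ and $e$ is, in cosimplicial degree $n$, the projection $X^{\times (n+1)} \to X$ onto the basepoint factor. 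Degreewise this is the (cosimplicially twisted) fibration $X^{\times n} \to X^{\times(n+1)} \to X$, and on totalizations it recovers the free loop fibration $\Omega X \to \cL X \to X$, whose holonomy action of $\pi_1 X$ on $H_*(\Omega X; \bZ)$ is nilpotent exactly because $X$ is EMSS-good.

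I would then supply the fiberwise input. The integral homology spectral sequence of the fiber cosimplicial space $\map_*(S^1_\bullet, X)$ is the Eilenberg--Moore (equivalently, cobar) spectral sequence of the path-loop fibration, whose totalization is the based loop space $\Omega X$ (Definition~\ref{defn:mapping-space}); by Dwyer \cite{dwyer} it converges strongly to $H_*(\Omega X; \bZ)$ whenever $\pi_1 X$ acts nilpotently on $H_*(\Omega X; \bZ)$, i.e.\ whenever $X$ is EMSS-good. The base spectral sequence, being that of a constant cosimplicial space, is concentrated in the column $s = 0$ and converges trivially.

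With these two inputs in hand, I would run Bousfield's convergence machinery. Following the proofs of \cite[4.1]{Bousfield} and \cite[8.4]{Bousfield}, in the form generalized in Appendix~\ref{app.A}, strong convergence of the homology spectral sequence of $\map(S^1_\bullet, X)$ is governed by the derived-limit behavior of the tower $\{H_*(\Tot^s \map(S^1_\bullet, X); \bZ)\}$. Comparing this tower with those of the constant base and of the strongly convergent fiber, and using the nilpotence of the $\pi_1 X$-action to control the twisting, forces the relevant $\lim^1$-terms to vanish and the resulting filtration on $H_*(\cL X; \bZ)$ to be complete Hausdorff with associated graded $E_\infty$; this is exactly strong convergence.

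The main obstacle is this last step. Bousfield's criteria are calibrated to the based situation, so the genuine work lies in upgrading Dwyer's strong convergence for the fiber to strong convergence of the \emph{total} spectral sequence over the constant base $X$, rather than obtaining only conditional or pro-convergence. Carrying out this comparison of towers for the twisted fibration, under the single hypothesis that $\pi_1 X$ acts nilpotently on $H_*(\Omega X;\bZ)$, is precisely the technical content that I would relegate to Appendix~\ref{app.A}.
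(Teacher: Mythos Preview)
Your decomposition differs from the paper's, and the difference matters because the available convergence tool (Bousfield \cite[8.4]{Bousfield}, stated here as Proposition~\ref{prop-pullback}) concludes $R$-pro-convergence for the \emph{pullback} of a square, given convergence for the other three corners together with control over the levelwise and $\Tot_s$-wise Eilenberg--Moore spectral sequences. In your fibration $\map_*(S^1_\bullet,X)\to\map(S^1_\bullet,X)\to c^\bullet X$, the pullback over the basepoint of $X$ is the \emph{fiber} $\map_*(S^1_\bullet,X)$, whose convergence you already know from Dwyer; the total cosimplicial space $\map(S^1_\bullet,X)$ sits in the wrong corner for the tool to apply. So the step you flag as ``the main obstacle'' is not a matter of relegating details to the appendix: the results you cite simply do not run in the direction you need, and you would have to prove a genuinely new comparison theorem (``total space convergence from fiber convergence over a constant base with nilpotent holonomy'') that neither \cite[4.1]{Bousfield} nor \cite[8.4]{Bousfield} supplies.

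The paper avoids this by choosing a pullback square in which $\map(S^1_\bullet,X)$ \emph{is} the pullback. Writing $S^1$ as the pushout $\Delta[1]\cup_{S^0}\ast$ gives
\[
\xymatrix{ \map(S^1_\bullet,X)\ar[r]\ar[d] & \map(\Delta[1]_\bullet,X)\ar[d]\\ \map(S^0_\bullet,X)\ar[r] & \map(\ast_\bullet,X),}
\]
whose other three corners are either constant or the Rector cobar construction for a trivial fibration, hence strongly convergent for elementary reasons. Proposition~\ref{prop-pullback} then applies directly; the EMSS-good hypothesis enters only when checking convergence of the Eilenberg--Moore spectral sequence for the $\Tot_s$-level square ($s\ge 1$), which is the path-loop fibration $\map(\Delta[1],X)\to X^2\leftarrow X$ with fiber $\Omega X$. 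Your intuition that nilpotence of the $\pi_1 X$-action on $H_*(\Omega X)$ is the crux is correct, but it feeds into the argument at this point rather than via a fiber-to-total comparison.
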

 
This strengthens the convergence results in~\cite[4.2]{Bousfield} that require $X$ to be simply connected.  We expect that  Proposition~\ref{prop-pullback} should enable similar generalizations for other mapping spaces.

To prove Proposition \ref{prop.anderson}, we need the following definitions and result.
A cosimplicial space is {\em R-strongly convergent} if the associated homology spectral sequence with coefficients in $R$ strongly converges. If $R= \bZ$ we often leave off the $R$.  It is \emph{$R$-pro-convergent} if the homology spectral sequence with coefficients in $R$ converges to the associated tower of partial total spaces; see~\cite[8.4]{Bousfield} for details. In each application in this paper, the associated tower of partial total spaces is eventually constant, so pro-convergence is equivalent to strong convergence in cases relevant to us. The next result, from~\cite[8.4]{Bousfield} and generalized to non-contractible $Y^{\bullet}$ in~\cite[3.2]{shipley-thesis}, is formulated in terms of  $R$-pro-convergent cosimplicial spaces.

Consider a pull-back square of cosimplicial spaces 
$$\xymatrix{  M^{\bullet} \ar[rr] \ar[d] && Y^{\bullet} \ar[d]^{f}\\
X^{\bullet}  \ar[rr] && B^{\bullet}.} 
$$
There are associated pull-back squares for each cosimplicial level $n$
 
 $$\xymatrix{  M^n \ar[rr] \ar[d] && Y^n \ar[d]\\
X^n \ar[rr] && B^n} 
$$
  and for each partial total space $\Tot_s$ 
    $$\xymatrix{  \Tot_s M \ar[rr] \ar[d] && \Tot_s Y \ar[d]\\
\Tot_s X  \ar[rr] && \Tot_s B.} 
$$

\begin{prop}\label{prop-pullback}
~\cite[8.4]{Bousfield}, ~\cite[3.2]{shipley-thesis} Consider a pull-back square of cosimplicial spaces as above, with $f$ a fibration and $X^{\bullet}$, $Y^{\bullet}$, and $B^{\bullet}$ fibrant.  If $X^{\bullet}$, $Y^{\bullet}$, and $B^{\bullet}$ are $R$ pro-convergent, and the Eilenberg-Moore spectral sequences for the pull-back squares above for each cosimplicial level $n$ and each total level $s$ strongly converge, then  $M^{\bullet}$ is $R$-pro-convergent 
\end{prop}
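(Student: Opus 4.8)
The plan is to reduce the claim to an analysis of the tower of partial total spaces $\{\Tot_s M\}_s$, feeding the hypotheses into this tower one stage at a time. First I would record that partial totalization $\Tot_s$, being a limit, preserves pullbacks, so that at each total level we have $\Tot_s M \cong \Tot_s X \times_{\Tot_s B} \Tot_s Y$; moreover, since $f\colon Y^{\bullet} \to B^{\bullet}$ is a fibration of cosimplicial spaces and $Y^{\bullet}, B^{\bullet}$ are fibrant, the induced map $\Tot_s f\colon \Tot_s Y \to \Tot_s B$ is a fibration of spaces, so each of these squares is a homotopy pullback. The same holds cosimplicial-levelwise, giving homotopy pullbacks $M^n \simeq X^n \times_{B^n} Y^n$. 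Thus both the tower $\{\Tot_s M\}_s$ and the cosimplicial space $M^{\bullet}$ are genuine homotopy pullbacks, and the Eilenberg--Moore machinery applies at each stage.

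Next I would invoke the level-wise Eilenberg--Moore spectral sequences supplied by the hypotheses. By assumption, for each total level $s$ the Eilenberg--Moore spectral sequence for the pullback square of $\Tot_s$'s converges strongly, with $E_2$-page $\operatorname{Tor}^{H_*(\Tot_s B; R)}(H_*(\Tot_s X; R), H_*(\Tot_s Y; R))$ abutting to $H_*(\Tot_s M; R)$; likewise the spectral sequence at each cosimplicial level $n$ converges strongly and computes $H_*(M^n; R)$ as $\operatorname{Tor}^{H_*(B^n; R)}(H_*(X^n; R), H_*(Y^n; R))$. The strategy is then to interleave these with the homology spectral sequences of $X^{\bullet}$, $Y^{\bullet}$, and $B^{\bullet}$, which pro-converge by hypothesis: the $\operatorname{Tor}$ functor of the cosimplicial homologies identifies the $E_2$-page of the putative homology spectral sequence of $M^{\bullet}$, while the three pro-convergent input towers, together with the strongly convergent Eilenberg--Moore spectral sequence at each stage, control the tower $\{H_*(\Tot_s M; R)\}_s$ to which it must pro-converge.

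The delicate point, and the one I expect to be the main obstacle, is the interchange of the inverse limit along the tower (the $s$-direction) with the $\operatorname{Tor}$/Eilenberg--Moore construction. Pro-convergence is not a levelwise statement: it requires compatibility of the induced filtrations across the whole tower, which is exactly where $\lim^1$-terms and Mittag--Leffler conditions enter. The role of the two strong-convergence hypotheses is precisely to tame these: strong convergence of the spectral sequence at each total level $s$ guarantees that the filtration on $H_*(\Tot_s M; R)$ is complete and Hausdorff and natural in $s$, so that passing to the pro-system over $s$ introduces no spurious $\lim^1$ contributions, while strong convergence at each cosimplicial level $n$ secures the $E_2$-identification of the $M^{\bullet}$-spectral sequence as the expected $\operatorname{Tor}$. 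I would therefore organize the argument as a comparison of pro-spectral-sequences, checking that the map from the $M^{\bullet}$-spectral sequence to the pro-system $\{H_*(\Tot_s M; R)\}$ is an isomorphism on $E_{\infty}$ by reducing, via naturality of the Eilenberg--Moore spectral sequence, to the already-assumed pro-convergence of $X^{\bullet}$, $Y^{\bullet}$, and $B^{\bullet}$. The generalization beyond Bousfield's contractible setting is precisely the replacement of his absolute-convergence inputs by these pro-convergence hypotheses, following Shipley's thesis.
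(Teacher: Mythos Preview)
The paper does not prove this proposition at all: it is stated as a citation, attributed to Bousfield~\cite[8.4]{Bousfield} with the generalization to non-contractible $Y^{\bullet}$ due to Shipley~\cite[3.2]{shipley-thesis}. Immediately after the statement, the paper moves on to the proof of Proposition~\ref{prop.anderson}, which \emph{applies} this result. So there is no proof in the paper against which to compare your attempt.

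As for the proposal itself, it is a plausible high-level outline of the strategy in the cited references --- set up the tower of homotopy pullbacks $\{\Tot_s M\}$, use the Eilenberg--Moore spectral sequence at each stage, and compare with the pro-systems coming from the three input cosimplicial spaces --- but it remains a sketch rather than a proof. The paragraph on the ``delicate point'' correctly identifies where the real work lies (the interchange of the tower limit with the Eilenberg--Moore filtration, and control of $\lim^1$ terms), yet it does not actually carry out that interchange. In Bousfield's argument and in Shipley's generalization, this step is handled by a careful induction up the tower together with specific algebraic lemmas about pro-isomorphisms and the behavior of Tor under the relevant filtrations; your proposal gestures at ``a comparison of pro-spectral-sequences'' without specifying the comparison maps or verifying the requisite Mittag--Leffler conditions. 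If you intend to supply an independent proof rather than cite the result, those details would need to be filled in.
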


\begin{proof}[Proof of Proposition~\ref{prop.anderson}]
Since $S^1$ is the following pushout in simplicial sets
   $$\xymatrix{   \text{*} \ar[d]\ar[rr] && \Delta[1] \ar[d] \\
S^0 \ar[rr] && S^1,} 
$$
the cosimplicial space $ \map(S^1_{\bullet}, X)$ is a pull-back:
  $$\xymatrix{  \map(S^1_{\bullet}, X) \ar[d] \ar[rr] && \map(\Delta[1]_{\bullet}, X) \ar[d] \\
    \map(S^0_{\bullet}, X). \ar[rr] && \map(\text{*}_{\bullet}, X).} 
$$
We use Proposition~\ref{prop-pullback} to establish strong convergence of the homology spectral sequence with coefficients in $\bZ$ for $ \map(S^1_{\bullet}, X)$ by verifying the hypotheses listed there.  

Since the inclusion $\text{*} \to \Delta[1]$ is a cofibration of simplicial sets, and $X$ is fibrant, the righthand vertical map above is a fibration of cosimplicial spaces. Also because $X$ is fibrant, the four corners are fibrant cosimplicial spaces. 

The cosimplicial spaces in the two bottom corners are constant, with each level given by $X$ and $X^2$ respectively.  Hence the associated homology spectral sequences are strongly convergent.   The cosimplicial space for the top right corner is equivalent to the cosimplicial space given by Rector's geometric cobar construction for the pullback of the identity maps:
  $$\xymatrix{ && X  \ar[d]\\
X  \ar[rr] && X.} 
$$
By~\cite[4.1]{Bousfield}, since the fibers here are trivial, the associated homology spectral sequence converges as long as $X$ is connected.

Next we consider the Eilenberg-Moore spectral sequences associated to the pullbacks in each level.
In level $n$ the pullback is given by
   $$\xymatrix{   && X^{n+2}  \ar[d]^{\pi_{0,n+2}}\\
X  \ar[rr]^{\Delta} && X^2.} 
$$ 
Again, we apply~\cite[4.1]{Bousfield}.  Here the vertical map is projection onto the first and last factors, so the action of $\pi_1(X^2)$ on the homology of the fiber is trivial. Thus, the associated spectral sequence strongly converges.
  
Finally, we consider the Eilenberg-Moore spectral sequences associated to the pullbacks of partial total spaces.  $\Tot_0$ agrees with cosimplicial level zero, so it is covered above for $n=0.$
  By~\cite[X.3.3]{bousfield.kan}, $\Tot_s \map(Z_{\bullet}, Y) \cong \map (Z_{\bullet}^{[s]}, Y)$ where $Z_{\bullet}^{[s]}$ is the $s$-skeleton of $Z$.
 It follows that $\Tot \cong \Tot_s$ for all $s$ in the bottom two corners.
Also, since $\Delta[1]$ is one dimensional,  $\Tot_s\map(\Delta[1]_\bullet, X) \cong \map(\Delta[1]_\bullet, X) $ for $s\geq 1.$  
 So we have the same pullback of partial total spaces for each $s \geq 1$,
   $$\xymatrix{   && \map(\Delta[1], X) \ar[d]\\
X  \ar[rr]^{\Delta} && X^2.} 
$$
Choose a point in $X^2$ in the image of the diagonal map $\Delta$, so that the fiber over that point is the pointed loop space $\Omega X$.  Since $X$ is EMSS-good, the action of the fundamental group $\pi_1(X)$ on the homology $H_*(\Omega X)$ for the path loop space fibration is nilpotent. It follows that for the vertical fibration above, $\pi_1 (X^2)$ also acts nilpotently on $H_*(\Omega X)$. 
Thus, by ~\cite[4.1]{Bousfield}, the associated spectral sequence strongly converges. 
\end{proof}

The following proposition considers the effect on the weak equivalence of Corollary \ref{cor-conv}  of adding disjoint base points.

{
\begin{prop}\label{prop.agree} If $Y^{\bullet}$ is a cosimplicial space such that $\oT \Sigma^{\infty} Y^{\bullet}$ is weakly equivalent to $\Sigma^{\infty} \oT Y^{\bullet}$, then $\oT \susp Y^{\bullet}$ is weakly equivalent to $\susp (\oT Y^{\bullet}).$
\end{prop}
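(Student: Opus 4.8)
The plan is to deduce the unpointed statement from the given pointed one by means of the natural stable splitting that relates $\susp$ and $\Sigma^{\infty}$. For any pointed space $W$, the section $S^{0}\to W_{+}$ sending the non-basepoint to the basepoint of $W$ and the retraction $W_{+}\to S^{0}$ collapsing $W$ are both natural in pointed maps and satisfy $rs=\id_{S^{0}}$. Applying $\Sigma^{\infty}$ therefore yields a natural split cofiber sequence $\bS \to \susp W \to \Sigma^{\infty}W$ of spectra; the cofiber is $\Sigma^{\infty}W$ because collapsing the image of the section reidentifies the two basepoints of $W_{+}$, recovering $W$. This gives a natural equivalence $\susp W \simeq \Sigma^{\infty}W \vee \bS$ in which the sphere summand is carried identically by every map of pointed spaces.

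First I would apply this splitting levelwise to the pointed cosimplicial space $Y^{\bullet}$. By naturality the sphere summand is constant, so the decomposition is one of cosimplicial spectra,
$$\susp Y^{\bullet} \simeq \Sigma^{\infty}Y^{\bullet} \vee \con \bS,$$
where $\con \bS$ denotes the constant cosimplicial spectrum on the sphere. Next I would apply the derived totalization $\oT$. The crucial observation is that $\oT$ is a homotopy limit over $\Delta$, and in a stable setting a finite wedge agrees with the corresponding finite product, with which homotopy limits commute; equivalently, $\oT$ preserves the split fiber sequence underlying the decomposition. Hence
$$\oT \susp Y^{\bullet} \simeq \oT \Sigma^{\infty}Y^{\bullet} \vee \oT \con \bS.$$
Since $\Delta$ has a terminal object, its nerve is contractible and the derived totalization of a constant cosimplicial spectrum returns its value, so $\oT \con \bS \simeq \bS$. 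Feeding in the hypothesis $\oT \Sigma^{\infty}Y^{\bullet} \simeq \Sigma^{\infty}\oT Y^{\bullet}$ and then reapplying the splitting to the single pointed space $\oT Y^{\bullet}$ yields
$$\oT \susp Y^{\bullet} \simeq \Sigma^{\infty}\oT Y^{\bullet} \vee \bS \simeq \susp\big(\oT Y^{\bullet}\big),$$
which is the claim.

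The step I expect to require the most care is the commutation of $\oT$ with the wedge. This is exactly where stability is essential: over unpointed or pointed spaces totalization does not commute with wedges, so the argument genuinely uses that we have passed to spectra, where finite coproducts and finite products coincide. The remaining ingredients---naturality of the section and retraction (guaranteeing that the complementary summand is the constant cosimplicial sphere) and the identification $\oT \con \bS \simeq \bS$---are routine, and I would only need to record them rather than compute anything substantial.
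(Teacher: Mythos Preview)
Your proposal is correct and follows essentially the same approach as the paper: both use the stable splitting $\susp W \simeq \Sigma^{\infty}W \vee \bS$, convert the wedge to a product via stability, commute totalization with products, evaluate $\oT$ on the constant sphere, and then reassemble. The paper is slightly more explicit about model-categorical bookkeeping (working in symmetric spectra and passing to Reedy fibrant replacements before taking products), but the underlying argument is the same as yours.
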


\begin{proof}  To be definite, we work here in the underlying model category of symmetric spectra of simplicial sets~\cite{HSS}, where the suspension spectrum $\Sigma^{\infty} X$ is always cofibrant.  Thus we do not need to derive the coproducts below, but we do need to consider the derived product, denoted $\overline{\times}$. 

 Recall that in the homotopy category of spectra, the coproduct is equivalent to the product, i.e.,
 $$W\vee Z \simeq W\ \overline\times \ Z$$
 for all spectra $W$ and $Z$. 

Let $\bbS \cong \Sigma^{\infty} S^0$ denote the sphere spectrum.  If $f\Sigma^{\infty} X$ is a fibrant replacement of $\Sigma^{\infty} X$ for some space $X$, then 
$$\susp X \cong \Sigma^{\infty} X \vee \bbS\simeq \Sigma^{\infty} X \ \overline{\times}\  \bbS \simeq
f\Sigma^{\infty} X \times f\bbS.$$

Let $f\susp Y^{\bullet}$ denote the fibrant replacement of $\susp Y^{\bullet}$ in the Reedy model category of cosimplicial spectra.
By the argument above, $f\susp Y^{\bullet}$ is levelwise weakly equivalent to $f\Sigma^{\infty} (Y^{\bullet}) \times c^{\bullet}f\bbS$, where $c^{\bullet} $ denotes the constant cosimplicial spectrum functor.    Since totalization commutes with products, 
$$\oT \susp Y^{\bullet}\simeq \oT \Sigma^{\infty} (Y^{\bullet}) \times \oT c^{\bullet}f\bbS\simeq \oT\Sigma^{\infty} (Y^{\bullet}) \times f\bbS.$$   
The hypothesis of the proposition, together with the weak equivalence between products and coproducts, implies that
$$\oT\Sigma^{\infty} (Y^{\bullet}) \times f\bbS \simeq \Sigma^{\infty} \oT Y^{\bullet} \vee f\bbS.$$ 
Since $\bbS \to f\bbS$ is a trivial cofibration, this last term is weakly equivalent to $\susp (\oT Y^{\bullet})$, as desired.
\end{proof}
}

\nocite{*}
\renewcommand{\bibname}{References}
\bibliographystyle{plain.bst}

\end{document}